\definecolor{webgreen}{rgb}{0,.5,0}
\definecolor{webbrown}{rgb}{.6,0,0}
\newcommand{\seqnum}[1]{\href{http://www.research.att.com/\%7Enjas/sequences/#1}{\underline{#1}}}
\begin{document}

~\vskip 1in

\begin{center}
\vskip 1cm{\LARGE\bf Lens Sequences}

\vskip 1cm
\large
Jerzy Kocik\\
Mathematics Department \\
Southern Illinois University Carbondale \\
Carbondale, IL 62901, USA \\
\href{mailto:jkocik@math.siu.edu}{\tt jkocik@math.siu.edu}\\
\end{center}

\vskip .2in

\begin{abstract}  
A family of sequences produced by a non-homogeneous 
linear recurrence formula derived from the geometry of circles inscribed in 
lenses is introduced and studied. Mysterious ``underground'' sequences underlying 
them are discovered in this paper.
\begin{enumerate}
\item[1.] Introduction \\[-18pt]
\item[2.] Recurrence formula from geometry \\[-18pt]
\item[3.] More on lens geometry \\[-18pt]
\item[4.] Basic algebraic properties of lens sequences \\[-18pt]
\item[5.] Underground sequences \\[-18pt]
\item[6.] Summary \\[-18pt]
\end{enumerate}
\end{abstract}

\theoremstyle{plain}
\newtheorem{theorem}{Theorem}[section]          
\newtheorem*{thma}{Theorem A}
\newtheorem{corollary}[theorem]{Corollary}      

\theoremstyle{definition}
\newtheorem{example}[theorem]{Example}		
\newtheorem{exercise}[theorem]{Exercise}	
\newtheorem{definition}[theorem]{Definition}
\newtheorem{proposition}[theorem]{Proposition}
\newtheorem{remark}[theorem]{Remark}

\numberwithin{equation}{section}
\numberwithin{figure}{section}
\numberwithin{table}{section}

\newcommand{\m}{\hphantom{m}}
\newcommand{\N}{{\mathbb N}}
\newcommand{\Z}{{\mathbb Z}}
%


\section{Introduction} \label{S1}
We investigate a new family of integer sequences. They are generated by a 
geometric construction, which we now describe.

\begin{figure}[h] 
\[
\includegraphics[scale=1.2]{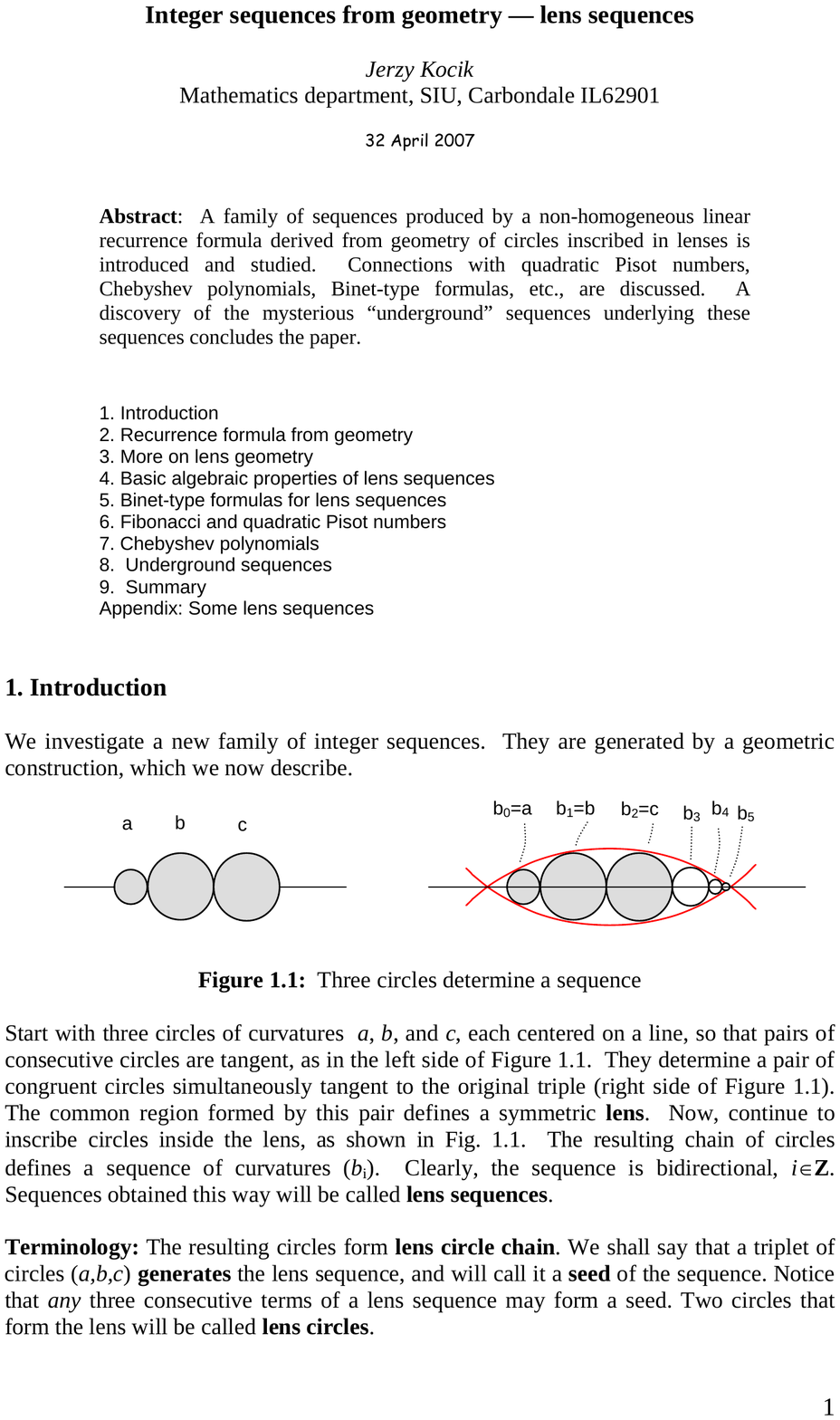} 
\]
\caption{Three circles determine a sequence}
\label{fig:1.1}
\end{figure}

Start with three circles of curvatures $a$, $b$, and $c$ centered on the same line, so 
that pairs of consecutive circles are tangent, as in the left side of Figure 
\ref{fig:1.1}. The three circles determine a pair of congruent circles that are 
simultaneously tangent to the original triple (right side of Figure \ref{fig:1.1}). 
The common region formed by this pair defines a symmetric \textbf{lens}. 
Now, continue to inscribe circles inside the lens, as shown in Fig.\ \ref{fig:1.1}. 
The resulting chain of circles defines a bilateral sequence of 
curvatures $(b_{i})$,  $i\in \Z$. 
Sequences obtained this way will be called \textbf{lens sequences}.

\paragraph{Terminology.} The resulting circles form a \textbf{lens circle chain}. 
We shall say that a triplet of circles $(a,b,c)$ \textbf{generates} the lens 
sequence, and we will call it a \textbf{seed} of the sequence. Notice that 
\textit{any} three consecutive terms of a lens sequence define a seed. 
The two circles that form the lens will be called \textbf{lens circles}.

\paragraph{Notation.} Typically, we denote circles and their curvatures by the 
same letter (circle $a$ has curvature $a,$ i.e., radius $1/a$).

\vskip.15in
Our opening result (proved in the next section) is this: 

\begin{thma} 	
Let $a, b$ and $c$ be the curvatures of the initial three 
circles generating a lens sequence, $b\ne $0. Then the sequence is determined 
by the following inhomogeneous three-term recurrence formula:
\begin{equation}    \label{eq:1.1}
  b_{n}=\alpha \,b_{n-1} - b_{n-2}+\beta\, ,
\end{equation}
where $\alpha $ and $\beta $ are constants determined by the original 
triple: 
\begin{equation}    \label{eq:1.2}
  \alpha = \frac{ab+bc+ca}{b^2}-1 
  \quad \hbox{and} \quad 
  \beta = \frac{b^2-ac}{b}\,.
\end{equation}
In particular, if $b_{0}=a$ and $b_{1}=b$ then $b_{2}=c$\,. 
\end{thma}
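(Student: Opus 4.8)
The plan is to translate the tangency conditions into coordinates, linearize the resulting relation, and then read off the recurrence. The naive route through the Descartes Circle Theorem is unavailable here, because the two lens circles \emph{intersect} (they bound the lens) rather than being tangent, so no four of the circles in sight are mutually tangent. Instead I place the common axis of the chain on the $x$-axis and the two congruent lens circles at $(0,\pm h)$ with radius $R$, so that the $i$-th chain circle has center $(x_i,0)$ and radius $r_i=1/b_i$. Internal tangency to a lens circle gives $x_i^2+h^2=(R-r_i)^2$, which I parametrize by $x_i=h\sinh t_i$, $\ r_i=R-h\cosh t_i$. External tangency of consecutive chain circles, $x_{i+1}-x_i=r_i+r_{i+1}$, then collapses --- using $\cosh t\pm\sinh t=e^{\pm t}$ --- to the single clean relation
\[
 e^{\,t_{i+1}}+e^{-t_i}=\frac{2R}{h}=:k .
\]

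Next I linearize. Writing $u_i=e^{t_i}$ turns this into the M\"obius recurrence $u_{i+1}=k-1/u_i$; substituting $u_i=p_i/p_{i-1}$ converts it into the homogeneous linear recurrence $p_{i+1}=k\,p_i-p_{i-1}$. The companion quantity $D:=p_{i+1}p_{i-1}-p_i^2$ is invariant (its consecutive difference vanishes by the recurrence), and a short computation rewrites each curvature as a product of consecutive $p$'s:
\[
 b_i=\frac{1}{R-h\cosh t_i}=\frac{2}{hD}\,p_{i-1}p_i=:\gamma\,p_{i-1}p_i .
\]
This representation is the engine of the whole argument.

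From $b_i=\gamma p_{i-1}p_i$ the recurrence is immediate. Expanding $b_{i+1}+b_{i-1}-(k^2-2)b_i$ with $p_{i+1}=kp_i-p_{i-1}$ and $p_{i-2}=kp_{i-1}-p_i$ reduces, via the invariant, to the constant $-\gamma k D$; hence $b_i=\alpha b_{i-1}-b_{i-2}+\beta$ with $\alpha=k^2-2$ --- the coefficient $-1$ being exactly the statement that the characteristic roots $\lambda,\,1/\lambda$ multiply to $1$. To identify $\alpha$ with the seed I set $p_0=P$, $p_1=Q$, so $p_{-1}=kP-Q$ and $p_2=kQ-P$; then $a=\gamma p_{-1}p_0$, $b=\gamma p_0p_1$, $c=\gamma p_1p_2$, and the one-line identity $p_{-1}p_0+p_1p_2+p_{-1}p_2=(k^2-1)p_0p_1$ yields $(ab+bc+ca)/b^2=k^2-1=\alpha+1$, that is $\alpha=(ab+bc+ca)/b^2-1$. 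Finally, evaluating the now-established recurrence at the seed gives $\beta=a+c-\alpha b$, and a direct simplification collapses this to $\beta=(b^2-ac)/b$; the same evaluation makes the closing assertion $b_2=c$ automatic once $b_0=a$, $b_1=b$.

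The main obstacle is the first step: justifying the global coordinate parametrization and the sign bookkeeping behind the relation $e^{\,t_{i+1}}+e^{-t_i}=k$. One must check that the chain is monotone along the axis (its circles accumulate at the two tips of the lens, where $r\to 0$), that the internal and external tangency signs are uniform along the chain, and that the degenerate case $D=0$ (a constant seed) is excluded. Once this relation is in hand, everything downstream is the routine linear algebra of a three-term recurrence together with the two short identities above.
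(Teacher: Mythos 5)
Your proof is correct --- I checked the key computations: the tangency relation does collapse to $e^{t_{i+1}}+e^{-t_i}=2R/h=k$; with $r_i=R-h\cosh t_i$ and $u_i=p_i/p_{i-1}$ one indeed gets $r_i=\frac{h}{2}\,\frac{kp_{i-1}p_i-p_i^2-p_{i-1}^2}{p_{i-1}p_i}=\frac{hD}{2p_{i-1}p_i}$, the invariance of $D$, the constancy of $b_{i+1}+b_{i-1}-(k^2-2)b_i=-\gamma kD$, and the seed identity $p_{-1}p_0+p_1p_2+p_{-1}p_2=(k^2-1)p_0p_1$ all verify --- but it takes a genuinely different route from the paper. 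The paper works coordinate-free with the Pedoe product and the configuration-matrix theorem of \cite{K1} ($\mathbf{b}^TF\mathbf{b}=0$ for $F=f^{-1}$), applied three times: solving one quadratic for the lens curvature $d$, another for the lens-circle product $K$, and finally adding the two roots $y_1+y_2$ of the quadratic in the $(A,A,x,y)$ configuration to eliminate the radical and expose linearity; this buys a direct geometric dictionary $\alpha=\frac{8}{1+K}-2$, $\beta$ in terms of $A$ and $K$ (Theorem \ref{th:2.2}, Proposition \ref{prp:3.1}), and the same machinery formally covers the diverging-lens case. Your coordinate/hyperbolic-parametrization argument is more elementary and self-contained (no appeal to the matrix theorem), and --- remarkably --- your factorization $b_i=\gamma\,p_{i-1}p_i$ with $p_{i+1}=kp_i-p_{i-1}$ is precisely the paper's ``underground sequence'' structure of Section \ref{S8} in its real, symmetric special case $k=s$ (Theorem \ref{th:8.1} with $\alpha=ks-2$), and your invariant $D$ is the paper's determinant invariant $\Delta_n=-\beta$ (Proposition \ref{prp:u4term}); so your proof of Theorem A anticipates the paper's deepest structural result rather than just its opening one. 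The tradeoff is exactly what you flag: your derivation as written covers the converging lens with an inner monotone chain and uniform tangency signs, and the diverging/outer configurations (including the negative-curvature enclosing disc and zero-curvature lines) would need separate sign bookkeeping --- though the paper itself only gestures at the diverging case (``similar reasoning, with slightly different initial matrix''), so your scope matches the paper's actual proof of the stated theorem.
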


Constants $\alpha $ and $\beta $ are ``invariants'' of the sequence -- their values may be determined from {\textnormal any} three consecutive terms of the sequence.

If $a$, $b$, $c$ as well as $\alpha $ and $\beta $ are integers, then $(b_{n})$ is an 
integer sequence. Surprisingly, this family of sequences includes a wide 
range of known sequences \cite{Slo}. However, for some of these sequences,
properties that we develop in this paper seem to be new. 
For now, let us look at a few examples.

\begin{example}[Vesica Piscis]  \label{exm:1.1} 
Starting with $(a,b,c)$ = (3,1,3) we get the recurrence formula 
\[
  b_{n} = 14 b_{n-1} - b_{n-2} - 8\, ,
\]
which produces
\begin{center}
  \dots \textbf{3, 1, 3}, 33, 451, 6273, 87363, 1216801, 16947843, 236052993, \dots,
\end{center}
a sequence known as \seqnum{A011922}. Note that if one starts with values twice as large,
the whole sequence is doubled:
\begin{center}
\dots \textbf{6, 2, 6,} 66, 902, 12546, 174726, 2433602, 33895686, 472105986, \dots
\end{center}
and the recurrence formula becomes $b_{n} = 14 b_{n-1} - b_{n-2} - 16$ (the 
same $\alpha $ and twice $\beta )$.  (The lens circles traverse each others center, 
forming a well-known figure of {\em Vesica Piscis}, hence the name of the example.)
\end{example}

\begin{example}[Golden Vesica]  \label{exm:1.2}
Start with $(a,b,c) = (1, 2, 10)$. Equations \eqref{eq:1.2} give $\alpha =7$ and $\beta  = -3$;
hence the sequence is generated by 
\[
  b_{n} = 7 b_{n-1} - b_{n-2} - 3
\]
and is
\begin{center}
  \textbf{1, 2, 10}, 65, 442, 3026, 20737, 142130, 974170, \dots
\end{center}
for positive $n$. This sequence, listed as Sloan's \seqnum{A064170}, is known for its interesting properties. 
Its terms are products of pairs of non-consecutive Fibonacci numbers: 
$1\cdot 2$, $2\cdot 5$, $5 \cdot 13$, $13\cdot 34$, \dots, etc. 
They also coincide with the denominators in a system of Egyptian fraction for ratios of consecutive Fibonacci numbers: 
$1/2 = 1/\mathbf{2}$, $3/5 = 1/\mathbf{2} + 1/\mathbf{10}$, $8/13 = 1/\mathbf{2} + 1/\mathbf{10} + 1/\mathbf{65}$, etc.
(The geometry of the lens relates to the golden proportion, hence the proposed name.)
\end{example}

\begin{example}  \label{exm:1.3} 
Triplet $(-1, 3, 15)$ gives 
\begin{center}
  \dots 99, 63, 35, 15, 3, \textbf{$-$1}, \textbf{3, 15}, 35, 63, 99, 143, \dots 
\end{center}
from the recurrence
\[
  b_{n} = 2 b_{n-1} - b_{n-2} + 8\,.
\]
The sequence (3, 15, 35, \dots) is known as \seqnum{A000466} and is defined by $b_{n} = 4n^{2} - 1$. The occurrence of 
negative curvatures will be explained later. 
\end{example}

\begin{example}  \label{exm:1.4} 
A lens sequence does not necessarily need to be 
symmetric. For instance the triple (2,1,3) produces the following 
bilateral sequence:
\begin{center}
  \dots, 12972, 1311, 133, 14, \textbf{2, 1, 3}, 24, 232, 2291, 22673, 224434, \dots
\end{center}
\end{example}

\begin{example}[More Examples]   \label{exm:1.5}
The lens sequences possess an ample 
diversity. They include such basic examples as (i) the powers of 2 
(\seqnum{A000079}), and (ii) triangular numbers (\seqnum{A000217}). 
\begin{enumerate}
\item[(i)] \textbf{1, 2, 4}, 8, 16, 32, 64, 128, 256, \dots 
        \hbox to 1.2in{}
	$b_{n} = 5/2 \,b_{n-1} - b_{n-2}$ 

\item[(ii)] \textbf{1, 3, 6}, 10, 15, 21, 28, 36, 45, 55, 66, 78, 91, \dots  
	\hbox to .65in{}
	$b_{n} = 2 b_{n-1} - b_{n-2} + 1$
\end{enumerate}
\end{example}

A more extensive list with references to OEIS \cite{Slo} is provided 
in Tables \ref{tbl:4.1} through \ref{tbl:4.3}.

\begin{remark} 
Certain circle packings, known as Apollonian gaskets \cite{Man}, result 
in \textit{integral} curvatures for all of the circles (see e.g., \cite{LMW}). 
One such gasket, an \textit{Apollonian Window} (see \cite{K1,K2}), 
is presented in Fig. \ref{fig:1.2}. 
Interestingly, it contains an infinite number of 
lens sequences, from which the three shown in Fig. \ref{fig:1.2} are especially conspicuous.
They correspond to the Examples \ref{exm:1.1}, \ref{exm:1.2}, and \ref{exm:1.3}, 
given above (up to scaling). This observation was the author's initial motivation for this study.
\end{remark}

\begin{figure}[h] 
\[
\includegraphics[scale=1.2]{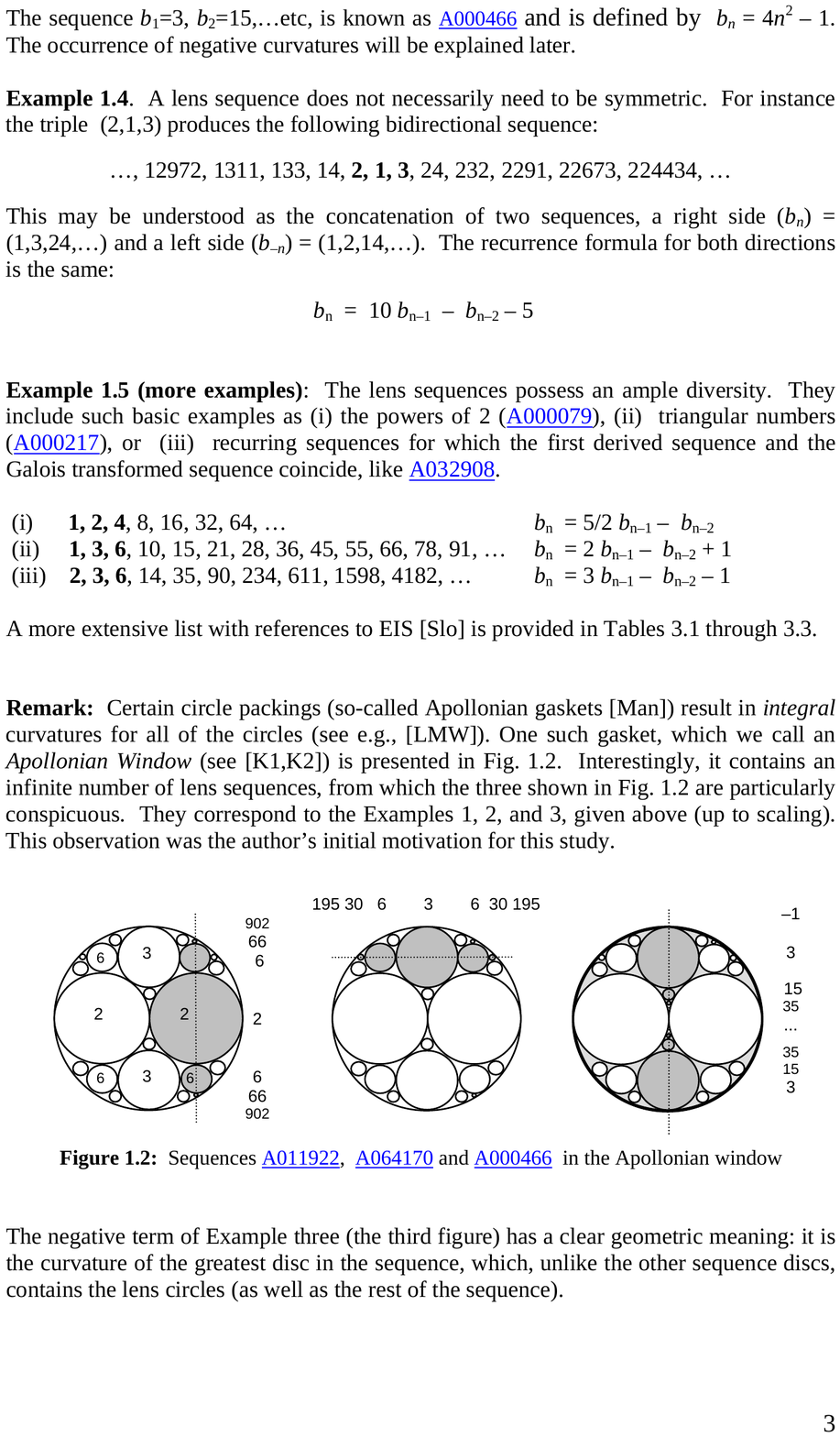} 
\]
\caption{Sequences \seqnum{A011922}, \seqnum{A064170}, 
and \seqnum{A000466} in the Apollonian window}
\label{fig:1.2}
\end{figure}
 
The negative term of Example \ref{exm:1.3} (the third one in Figure \ref{fig:1.2}) has 
a clear geometric meaning: it is the curvature of the greatest disc in the sequence, 
which --- unlike the other discs in the sequence --- contains the lens circles 
(as well as the rest of the sequence). 

\vskip.1in
A number of interesting features are common to all lens sequences: 

\paragraph{1. Limits.} In many cases, the limit of the ratios of  
consecutive entries is well defined. For instance, referring to the above 
examples:

\begin{tabular}{ll}
Example 1 [\seqnum{A011922}]: &$\displaystyle\lim_{n\to\infty } \dfrac{b_{n+1} }{b_n }
                                     =7+4\sqrt 3 = (2+\surd 3)^2$ \\[10pt]
Example 2 [\seqnum{A064170}]: &$\displaystyle\lim_{n\to\infty } \dfrac{b_{n+1} }{b_n }
                                     =\frac{7+3\sqrt 5 }{2} 
                                     =\left(\frac{1+\sqrt 5 }{2}\right)^4$\\[10pt]
Example 3 [\seqnum{A000466}]: &$\displaystyle\lim_{n\to\infty } \dfrac{b_{n+1} }{b_n } 
                                     = 1 $ \\[10pt]
\end{tabular}

\noindent
These numbers are examples of Pisot numbers 
and will be called characteristic constants of the sequences
denoted $\lambda=(\alpha+\sqrt{\alpha^2-4)}$.

\paragraph{2. Sums.} 
The reciprocals of curvatures are the circles' radii. 
Their sum is determined by the length of the lens. For instance:

\vskip.1in
\begin{tabular}{ll}
Example 1 [\seqnum{A011922}]: 
	&$\Sigma_{i=1}^\infty\, 1/b_{i} = 1 + 1/3 + 1/33 + 1/451 + \ldots 
		= \surd 3/2$  \\[8pt]
Example 2 [\seqnum{A064170}]: 
	&$\Sigma_{i=1}^\infty\, 1/b_{i} = 1 + 1/2 + 1/10 + 1/65 + \ldots 
		= (1+\surd 5)/2 = 1.618 \ldots$  \\[8pt]
Example 3 [\seqnum{A000466}]: 
	&$\Sigma_{n=1}^\infty\, 1/b_{n} = \sum _{n} \,1/(n^{2}-1) = 1/3 +1/15
		+ 1/35 + \ldots = 1$\\[8pt]
\end{tabular}

\paragraph{3. Binet-type formulas.} 
For $\alpha\not=2$, the curvatures may be expressed in terms a non-homogeneous 
Binet-type formulas:

\vskip.1in
\noindent
Example 1 [Vesica Piscis, \seqnum{A011922}]: \textbf{3 1 3} 33 451 6273 87363 {\ldots}, 
\[
  b_{n}  = \frac{4+(2+\sqrt{3})^{2n}+(2-\sqrt{3})^{2n}}{6}\, .
\]

\noindent
Example 2. [Golden Vesica [\seqnum{A064170}]: 
\ldots \textbf{2 1 2} 10 35 442 \ldots, 
\[
 b_{n} = \frac{3+ \left(\frac{1+\sqrt 5}{2}\right)^{4n}+\left(\frac{1-\sqrt 5}{2}\right)^{4n}}{5}
\]

\noindent
Example 3. Also non-symmetric lens sequences can be expressed this way. 
For instance, the sequence extended from $(6, 2, 3)$, which is 
(\ldots 2346 299 39 \textbf{6 2 3} 15 110 858 6747\ldots),
with the recurrence $b_{n} = 8b_{n-1} - b_{n-2} - 7$, may be obtained from 
\[
  b_{n} = \frac{(25-3\sqrt{15}) (4 + \surd 15)^n+(25+3\sqrt{15})
	(4 - \surd 15)^n}{60} + \frac{7}{6}
\]

\bigskip

The above properties are known for some of the sequences, but now they acquire a geometric interpretation. Other related concepts include geometry of inversions, Chebyshev polynomials, etc. 

The most remarkable and perhaps surprising property is that the integer lens sequences 
are ``shifted squares'' of yet deeper integer ``underground'' sequences. 
This discovery is the topic of the final section of this paper.

\section{Recurrence formula from geometry} \label{S2}

In this section we prove Theorem A on the recurrence formula for lens 
sequences. A reader interested in the algebraic properties of these 
sequences may skip it without loss of continuity. 
\\

We shall need a theorem on circle configurations generalizing 
that of Descartes' theorem on ``kissing circles'' (\cite{Des,K1}). 
If C$_{1}$ and C$_{2}$ denote two circles of radii $r_{1}$ and $r_{2}$ respectively, and $d$ denotes 
the distance between their centers, then one defines a product of the circles as
\begin{equation}    \label{eq:2.1}
  \langle C_{1},C_{2}\rangle = \frac{d^2-r_1^2 -r_2^2 }{2r_1 r_2 }\,,
\end{equation}
which we propose to call the \textit{Pedoe product}. Its values for a few cases are shown in Figure \ref{fig:2.1}. 
For any four circles $C_{i}$, $i = 1,\ldots,4$, define a \textbf{configuration matrix} $f$ as the matrix with entries
\[
  f_{ij} = \langle C_{i},C_{j}\rangle\,,
\]
where the brackets denote the Pedoe inner product of circles.

\begin{figure}[h] 
\[
\includegraphics[scale=1.2]{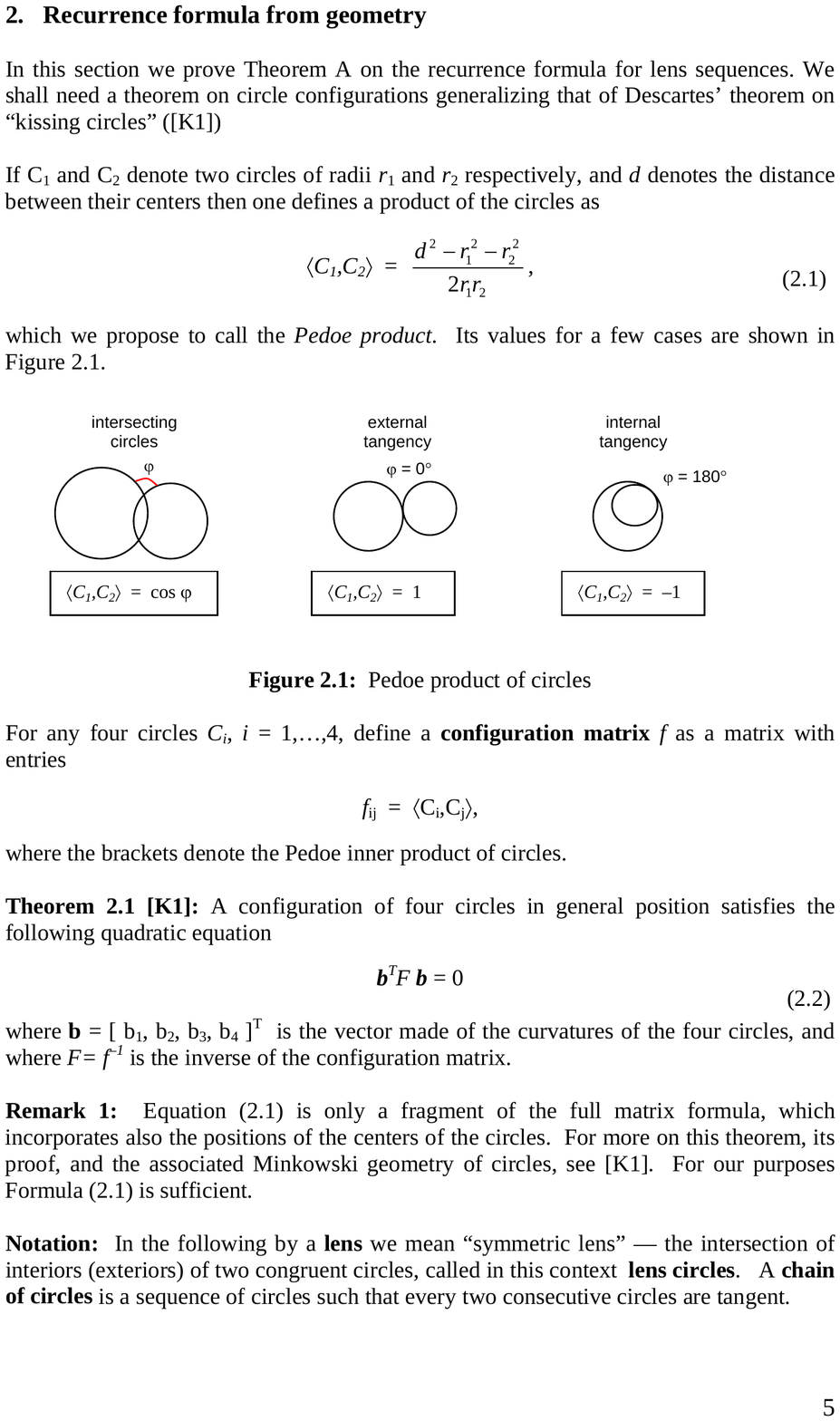} 
\]
\caption{Pedoe product of circles}
\label{fig:2.1}
\end{figure}

\begin{theorem}[\cite{K1}] \label{th:2.1} 
A configuration of four circles in general 
position satisfies the following quadratic equation
\begin{equation}    \label{eq:2.2}
  \textbf{\textit{b}}^{T}F \textbf{\textit{b}} = 0
\end{equation}
where $\textbf{\textit{b}} = [ b_{1}, b_{2}, b_{3}, b_{4} ]^{T}$ is the 
vector made of the curvatures of the four circles, and 
where $F= f^{-1}$ is the inverse of the configuration matrix.  
\end{theorem}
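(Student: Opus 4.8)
The plan is to realize circles as vectors in a four-dimensional real quadratic space on which the Pedoe product \eqref{eq:2.1} becomes the ambient bilinear form, after which \eqref{eq:2.2} drops out of linear algebra in a few lines. Concretely, to each circle with center $p=(x,y)$ and signed radius $r$ (curvature $b=1/r$) I would associate the vector
\[
  v=\bigl(b,\ \hat b,\ bx,\ by\bigr),\qquad \hat b=b\,(x^2+y^2-r^2),
\]
whose first entry is the curvature, whose second entry $\hat b$ is the ``co-curvature,'' and whose last two entries form the curvature-weighted center. On $\mathbb{R}^4$ I would put the symmetric bilinear form $g$ with matrix
\[
  G=\begin{pmatrix} 0 & \tfrac12 & 0 & 0\\ \tfrac12 & 0 & 0 & 0\\ 0 & 0 & -1 & 0\\ 0 & 0 & 0 & -1\end{pmatrix},
\]
of Lorentzian (Minkowski) signature $(1,3)$.

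First I would verify the central identity $\langle C_i,C_j\rangle=g(v_i,v_j)$. Expanding $d^2=|p_i|^2+|p_j|^2-2p_i\!\cdot\!p_j$ in \eqref{eq:2.1} and regrouping shows that the Pedoe product equals $\tfrac12(\hat b_i b_j+b_i\hat b_j)-(b_ip_i)\!\cdot\!(b_jp_j)$, which is exactly $g(v_i,v_j)$; in particular $g(v,v)=b\hat b-b^2|p|^2=-b^2r^2=-1$, recovering $\langle C,C\rangle=-1$ and showing that every circle maps to a $g$-unit vector. The configuration matrix $f$ is therefore precisely the Gram matrix of $v_1,\dots,v_4$ relative to $g$. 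Next I would observe that the curvature is read off by a \emph{fixed} linear functional: $b=g(v,e)$ for the constant vector $e=(0,2,0,0)$, and that this vector is null, $g(e,e)=0$ (geometrically it represents the point at infinity).

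With this dictionary in place the theorem is immediate. ``General position'' means the four vectors $v_1,\dots,v_4$ are linearly independent, hence form a basis of $\mathbb{R}^4$ and $f$ is invertible, so $F=f^{-1}$ exists. Writing $e=\sum_i c_i v_i$ gives $b_j=g(e,v_j)=\sum_i c_i f_{ij}$, i.e.\ $\mathbf b=f\mathbf c$ and thus $\mathbf c=F\mathbf b$. Since $f$ (hence $F$) is symmetric and $fF=I$,
\[
  0=g(e,e)=\mathbf c^{T} f\,\mathbf c=(F\mathbf b)^{T} f(F\mathbf b)=\mathbf b^{T} F^{T} f F\,\mathbf b=\mathbf b^{T} F\,\mathbf b,
\]
which is precisely \eqref{eq:2.2}.

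The main obstacle is the setup rather than the conclusion: one must identify the coordinates in which the Pedoe product linearizes to a constant bilinear form and confirm the single geometric fact that the curvature functional is represented by a null vector. The verifications $\langle C_i,C_j\rangle=g(v_i,v_j)$ and $g(e,e)=0$ are the heart of the argument; everything after them is the four-line computation above, and the only hypothesis consumed is nondegeneracy (general position), which guarantees that the Gram matrix $f$ is invertible.
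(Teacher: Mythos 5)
Your proof is correct, and it is essentially the argument of the paper's cited source: the paper itself states Theorem \ref{th:2.1} without proof, deferring to \cite{K1} and its ``associated Minkowski geometry of circles,'' which is precisely your construction --- circles realized as unit vectors in a signature-$(1,3)$ space whose bilinear form restricts to the Pedoe product, with the curvature functional represented by a null vector $e$, so that expanding $e$ in the basis $v_1,\dots,v_4$ and using $g(e,e)=0$ yields $\mathbf{b}^{T}F\,\mathbf{b}=0$. All your verifications check out (including $g(v,v)=-1$, $b=g(v,e)$, and the invertibility of the Gram matrix $f$ under the general-position hypothesis), so nothing is missing.
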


\begin{remark} 
Equation \eqref{eq:2.2} is only a fragment of the full matrix 
formula, which incorporates also the positions of the centers of the 
circles. For more on this theorem, its proof, and the associated Minkowski 
geometry of circles, see \cite{K1}. For our purposes Formula \eqref{eq:2.2} 
is sufficient.
\end{remark}

\paragraph{Notation.}
In the following by a \textbf{lens} we mean ``symmetric 
lens'' --- the intersection of the interiors (exteriors) of two congruent 
circles, called in this context \textbf{lens circles}. A \textbf{chain of 
circles} is a sequence of circles such that every two consecutive circles 
are tangent.

\vskip.2in
We are now ready to prove the basic result.

\begin{theorem}  \label{th:2.2} 
A sequence $(b_{n})$ of curvatures of a chain of circles inscribed in a lens satisfies a 
non-homogeneous linear recurrence formula of the form
\[
  b_{n+1} = \alpha  b_{n} - b_{n-1} +\beta 
\]
for some constants $\alpha $ and $\beta $, with 
\begin{equation}    \label{eq:2.3}
  \alpha = \frac{6-2K}{1+K} = \frac{8}{1+K}-2 
	\hbox{ and } 
  \beta =\frac{8A}{1+K} \, ,
\end{equation}
where $K$ is the Pedoe product of the two lens circles and $A=1/R$ is the 
curvature of each lens circle.
\end{theorem}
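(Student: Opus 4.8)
The plan is to apply the four-circle relation of Theorem~\ref{th:2.1} to a carefully chosen quadruple --- the two lens circles together with two consecutive circles of the chain --- and then to squeeze the three-term recurrence out of the resulting quadratic by a Vieta-type argument. Fixing $n$, I would look at the four circles $L_1, L_2, b_{n-1}, b_n$, where $L_1,L_2$ are the two congruent lens circles of curvature $A$ and $b_{n-1},b_n$ are consecutive members of the chain. Every entry of the configuration matrix $f$ is forced by the geometry: the diagonal entries are $\langle C_i,C_i\rangle=-1$; the lens--lens entry is $\langle L_1,L_2\rangle=K$ by definition; and every remaining pair is tangent (each chain circle touches both lens circles, and $b_{n-1}$ touches $b_n$), so those Pedoe products are $\pm1$, the signs being pinned to $+1$ by the orientations appropriate to a lens chain. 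Thus
\[
  f=\begin{pmatrix}-1&K&1&1\\ K&-1&1&1\\ 1&1&-1&1\\ 1&1&1&-1\end{pmatrix},
\]
and Theorem~\ref{th:2.1} yields $\mathbf{b}^{T}F\mathbf{b}=0$ with $F=f^{-1}$ and $\mathbf{b}=[A,A,b_{n-1},b_n]^{T}$.

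Next I would exploit the symmetry of $f$ under the interchange $3\leftrightarrow 4$ of the two chain indices; this symmetry passes to $F$, so the quadratic, read as an equation $Q(b_{n-1},b_n)=0$, is symmetric in its two arguments. Writing $x=b_{n-1}$, $y=b_n$, it has the shape $F_{33}(x^{2}+y^{2})+2F_{34}\,xy+2A(F_{13}+F_{23})(x+y)+c=0$ for a constant $c$ depending only on $A$ and $K$. The decisive observation is that \emph{any} two consecutive chain circles sit in the lens in exactly the same way, so the pair $(b_n,b_{n+1})$ satisfies the \emph{same} quadratic $Q=0$ as $(b_{n-1},b_n)$.

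Consequently, with $y=b_n$ held fixed, both $b_{n-1}$ and $b_{n+1}$ are roots of the single-variable quadratic $Q(\cdot,b_n)=0$, and Vieta's formula for the sum of the roots gives $b_{n-1}+b_{n+1}=-\tfrac{2F_{34}}{F_{33}}\,b_n-\tfrac{2A(F_{13}+F_{23})}{F_{33}}$, which is precisely the asserted recurrence with $\alpha=-2F_{34}/F_{33}$ and $\beta=-2A(F_{13}+F_{23})/F_{33}$. Since $f$ is symmetric these are ratios of cofactors of $f$ (the factor $\det f$ cancels), so only a few $3\times3$ minors remain to be computed; carrying this out gives $C_{33}=(1+K)^{2}$, $C_{34}=(K-3)(1+K)$ and $C_{13}+C_{23}=-4(1+K)$, whence $\alpha=(6-2K)/(1+K)$ and $\beta=8A/(1+K)$, as claimed.

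The step demanding the most care is the bookkeeping of signs in $f$: the unoriented Pedoe product of Equation~\eqref{eq:2.1} returns $-1$ for the internally tangent lens--chain pairs, and it is only after passing to the oriented circles of \cite{K1} that all the tangency entries become $+1$; getting this wrong flips the sign of $\beta$. I would also record the nondegeneracy hypothesis that $C_{33}=(1+K)^{2}$ be nonzero, i.e.\ $K\neq-1$, which is exactly the condition that the lens not collapse and which matches the $(1+K)$ in the denominators; the negative-curvature members appearing in cases such as Example~\ref{exm:1.3} are then absorbed automatically by the same signed-curvature convention.
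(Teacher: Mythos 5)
Your argument is correct and is essentially the paper's own proof: the same quadruple (the two lens circles plus two consecutive chain circles) fed into Theorem \ref{th:2.1}, the same symmetric quadratic $Q(x,y)=0$, and the same key observation that $b_{n-1}$ and $b_{n+1}$ are the two roots of $Q(\cdot\,,b_n)=0$ summed via Vieta (the paper adds its two explicit solutions $y_{1,2}$, which is the identical step), with your cofactor-ratio computation being only a cosmetic shortcut around writing out $F$ in full. The one divergence is the sign convention in $f$ (you take all tangency entries $+1$, whereas the paper's printed matrix has $-1$ for the internally tangent lens--chain pairs), which flips the sign of $\beta$: your version reproduces the theorem's printed $\beta=8A/(1+K)$, while the paper's own matrix yields $y_1+y_2=\frac{6-2K}{1+K}\,x-\frac{8A}{1+K}$, i.e.\ $\beta=-8A/(1+K)$ (the sign consistent with Corollary \ref{cor:2.3} and \eqref{eq:3.1a}), so the sign subtlety you flagged is real but resides in the paper's own statement rather than in your proof.
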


\begin{proof} 
Consider two consecutive circles in the lens, of curvatures 
say $a$ and $b$. Denote the curvatures of the circles that form the lens by $A$, 
and their Pedoe product by $K$ ($K=\cos \varphi $, if the circles intersect). 
The configuration matrix $f$ and its inverse are easy to find. In the case 
of converging lenses we can read it off from Fig. \ref{fig:2.2}a: 
\[
  f = \left[ {{\begin{array}{cc|cc}
 {-1} \hfill & K \hfill & {-1} \hfill & {-1} \hfill \\
 K \hfill & {-1} \hfill & {-1} \hfill & {-1} \hfill \\ \cline{1-4}
 {-1} \hfill & {-1} \hfill & {-1} \hfill & {+1} \hfill \\
 {-1} \hfill & {-1} \hfill & {+1} \hfill & {-1} \hfill \\
\end{array} }} \right]
\]
where the indices are ordered as $(A,A,x,y)$. Its inverse $F$ is easy to find, and the 
master equation \eqref{eq:2.1}, after multiplying by a factor of 8, becomes:
\[
\left[ {{\begin{array}{cccc}
 A \hfill \\
 A \hfill \\
 x \hfill \\
 y \hfill \\
\end{array} }} \right]^T
\left[ {{\begin{array}{cccc}
 {\tfrac{4}{K+1}}  & {\tfrac{-4}{K+1}}  & 2  & 2  \\[4pt]
 {\tfrac{-4}{K+1}}  & {\tfrac{4}{K+1}}  & 2  & 2  \\[4pt]
 2  & 2  & {K+1}  & {K-3}  \\[4pt]
 2  & 2  & {K-3}  & {K+1}  \\
\end{array} }} \right]
\left[ {{\begin{array}{*{20}c}
 A \hfill \\
 A \hfill \\
 x \hfill \\
 y \hfill \\
\end{array} }} \right]=\quad 0
\]
This quadratic equation is equivalent to:
\[
  (1+K)x^{2} + (1+K)y^{2} + 2(K-3) xy + 8Ax + 8Ay = 0\,.
\]
One may solve it for $y$ to get two solutions (corresponding to two signs at the square root):
\begin{equation}    \label{eq:2.4}
  y_{1,2} = \frac{4A+(K-3)x\pm 2\sqrt {2(1-K)x^2-8Ax+4A^2} }{1+K}
\end{equation}
Note that the two solutions $y_{1}$ and $y_{2}$ correspond to the two possible circles tangent to $x$: 
one on the left and one on the right. To eliminate radicals, add the two solutions:
\[
  y_{1} + y_{2} = \frac{6-2K}{1+K}x - \frac{8A}{1+K} \,.
\]
Since the triple $(y_{1}, x, y_{2})$ forms a sequence in a chain of inscribed circles, 
we may label these curvatures as $b_{n-1} = y_{1}$, $b_{n}=x$, and $b_{n+1} = y_{2}$, to get
\[
  b_{n+1} + b_{n-1} = \alpha\, b_{n}+\beta \,,
\]
which is equivalent to \eqref{eq:2.3}. The case of the diverging lens results from similar reasoning, 
with slightly different initial matrix $F$. 
\end{proof}

\begin{corollary} \label{cor:2.3} 
The sequence constants are related: $\alpha + R\,\beta  = -2$. 
\end{corollary}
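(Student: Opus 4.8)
The plan is to treat this as a direct algebraic consequence of Theorem \ref{th:2.2}: substitute the closed forms for $\alpha$ and $\beta$, use the fact that the lens-circle radius $R$ and its curvature $A$ are reciprocal, and verify that everything collapses to the constant $-2$. Nothing geometric needs to be reproved here; the corollary is essentially a compatibility relation among the quantities already computed in \eqref{eq:2.3}, so I would not return to the configuration matrix at all.

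Concretely, I would first record $\alpha = \frac{6-2K}{1+K}$ together with the value of $\beta$ exactly as it emerges from the $y_{1}+y_{2}$ computation in the proof of Theorem \ref{th:2.2}, namely $\beta = -\frac{8A}{1+K}$. Since $A = 1/R$, multiplying $\beta$ by $R$ eliminates the lens-circle curvature entirely and leaves $R\beta = -\frac{8}{1+K}$, a quantity depending only on the Pedoe product $K$. Both $\alpha$ and $R\beta$ are now single fractions over the common denominator $1+K$, so I would add them to obtain $\alpha + R\beta = \frac{(6-2K)-8}{1+K} = \frac{-2-2K}{1+K}$, and then factor the numerator as $-2(1+K)$ to cancel against the denominator and reach $-2$.

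The single point requiring attention---and effectively the entire content of the corollary---is the sign carried by the $8A/(1+K)$ term. With the sign it acquires in the recurrence $b_{n+1}+b_{n-1}=\alpha b_{n}+\beta$, the numerator reads $6-2K-8$, which is precisely $-2(1+K)$; the opposite sign would instead give $\frac{14-2K}{1+K}$, which is not constant in $K$. Thus the identity $\alpha + R\beta = -2$ is really the assertion that the $K$-dependence of $\alpha$ and of $R\beta$ cancels exactly, and the only way to get it wrong is to mishandle that sign. I expect no obstacle beyond this bookkeeping: once the reciprocal relation $RA=1$ and the correct sign of $\beta$ are in place, the cancellation is forced.
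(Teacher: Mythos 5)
Your proof is correct and is precisely the argument the paper intends: the corollary is stated as an immediate algebraic consequence of Theorem \ref{th:2.2}, obtained by substituting $A=1/R$ into $\alpha=\frac{6-2K}{1+K}$ and $\beta=-\frac{8A}{1+K}$ and cancelling the common factor $1+K$. You are also right to flag the sign of $\beta$: as printed, \eqref{eq:2.3} reads $\beta=\frac{8A}{1+K}$, but the $y_1+y_2$ computation in the proof (and Proposition \ref{prp:3.1}, where $R=\frac{\alpha+2}{-\beta}$) shows the intended value is $-\frac{8A}{1+K}$, and only that sign makes $\alpha+R\beta$ constant.
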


Now let us see how three circles determine a sequence.

\begin{theorem} \label{th:2.4} 
Let $a, b$ and $c$ be curvatures of three consecutive circles inscribed in a lens. 
Then the sequence of the circle curvatures is determined by the 
following three-term recurrence formula:
\begin{equation}    \label{eq:2.5}
    b_{n} = \alpha \, b_{n-1} - b_{n-2} + \beta \, ,
\end{equation}
where $\alpha = \frac{ab+bc+ca}{b^2}-1$ and $\beta = 
\frac{b^2-ac}{b}$. If $b_{0}=a$ and $b_{1 }=b$ then $b_{2}=c$\,, 
that is, $\alpha b +\beta = a + c$.
\end{theorem}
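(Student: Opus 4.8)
The plan is to deduce Theorem~\ref{th:2.4} from the quadratic constraint already produced in the proof of Theorem~\ref{th:2.2}, reading off $\alpha$ and $\beta$ by Vieta's formulas rather than re-deriving the recurrence from scratch. Recall that fixing the middle circle $x$ there turned the master equation into the quadratic
\[
  (1+K)\,y^{2} + \bigl[\,2(K-3)x + 8A\,\bigr]y + \bigl[\,(1+K)x^{2} + 8Ax\,\bigr] = 0,
\]
whose solutions $y_{1},y_{2}$ are displayed in \eqref{eq:2.4}. The first step I would take is to observe that for three \emph{consecutive} circles $a,b,c$ the middle curvature is $b$, and its two neighbours $a$ and $c$ are exactly the two circles tangent to it from either side; hence, setting $x=b$, the unordered pair $\{a,c\}$ coincides with the root pair $\{y_{1},y_{2}\}$ of the quadratic above.

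With that identification in hand, the two Vieta relations finish the argument. The sum of the roots is precisely the relation obtained in the proof of Theorem~\ref{th:2.2},
\[
  a + c = y_{1}+y_{2} = \frac{(6-2K)\,b - 8A}{1+K} = \alpha\,b + \beta,
\]
so that $\alpha = (6-2K)/(1+K)$ as in \eqref{eq:2.3} and the additive constant is $\beta = -8A/(1+K)$. The product of the roots supplies the second, and decisive, relation:
\[
  a\,c = y_{1}y_{2} = b^{2} + \frac{8Ab}{1+K} = b^{2} - \beta\,b.
\]
From this I solve immediately for $\beta = (b^{2}-ac)/b$, now expressed purely in terms of the triple; substituting back into the sum relation gives
\[
  \alpha = \frac{a+c-\beta}{b} = \frac{ab+bc+ca-b^{2}}{b^{2}} = \frac{ab+bc+ca}{b^{2}} - 1,
\]
which are the asserted formulas. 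I would stress that it is precisely the product relation that makes three consecutive terms \emph{sufficient}: the sum relation alone is a single linear equation in the two unknowns $\alpha,\beta$, and without Vieta one would be forced to bring in a fourth term $b_{n+2}$ to close the system.

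The final assertion $\alpha b + \beta = a + c$ (equivalently $b_{2}=c$ when $b_{0}=a$, $b_{1}=b$) is then the sum relation read once more, and it also admits a direct one-line check from the formulas, since $\alpha b + \beta = \tfrac{ab+bc+ca}{b} - b + \tfrac{b^{2}-ac}{b} = \tfrac{ab+bc}{b} = a+c$. The same computation shows that $\alpha$ and $\beta$ are built from symmetric combinations of a consecutive triple, and so return identical values for every such triple, confirming their status as invariants of the sequence; this is also consistent with Corollary~\ref{cor:2.3}.

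I expect the only genuine obstacle to lie not in the algebra but in justifying the identification $\{a,c\}=\{y_{1},y_{2}\}$: one must argue that the quadratic governing three consecutive curvatures is symmetric in the two neighbours of the fixed middle circle, so that $a$ and $c$ really are its two roots. This is geometrically transparent from the symmetric roles of the left and right tangent circles in the configuration matrix $f$, but it is the hinge that links the recurrence of Theorem~\ref{th:2.2} to the three-term seed data $(a,b,c)$.
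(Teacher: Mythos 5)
Your proof is correct, but it follows a genuinely different route from the paper's. The paper's proof of Theorem~\ref{th:2.4} runs through three successive applications of the configuration-matrix Theorem~\ref{th:2.1}: Step~1 computes the lens-circle curvature $d=b(ac-b^2)/(ab+bc+ca+b^2)$ from the quadruple $(a,b,c,d)$; Step~2 computes the Pedoe product $K=8b^2/((a+b)(b+c))-1$ from the quadruple $(a,b,d,d')$; and Step~3 substitutes these values into the formulas of Theorem~\ref{th:2.2} (equivalently into \eqref{eq:2.4}). You bypass Steps~1 and~2 entirely: leaving $K$ and $A$ unknown, you extract \emph{both} Vieta relations from the quadratic already derived in the proof of Theorem~\ref{th:2.2}, and the product relation $ac=b^2-\beta b$ --- which the paper only surfaces much later, in the algebra of Proposition~\ref{prp:def} --- supplies the second equation needed to solve for $\alpha$ and $\beta$ in terms of the seed alone, where the paper instead needs the two extra matrix computations. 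Your hinge, the identification $\{a,c\}=\{y_1,y_2\}$ for $x=b$, is precisely the labeling $b_{n-1}=y_1$, $b_n=x$, $b_{n+1}=y_2$ that the paper itself adopts in proving Theorem~\ref{th:2.2}, so it requires no justification beyond what the paper already provides (subject to the same caveat about the diverging-lens case, where the initial matrix differs). What the paper's longer route buys is the explicit geometric data $d$ and $K$ as functions of the seed, i.e., \eqref{eq:2.6}, which is reused in Proposition~\ref{prp:3.1}; your route buys economy, dispensing with two configuration matrices and their inverses. One further point in your favor: you work with $\beta=-8A/(1+K)$, which is what the proof of Theorem~\ref{th:2.2} actually yields and what Corollary~\ref{cor:2.3} requires, thereby quietly correcting the sign that appears in the statement of \eqref{eq:2.3}.
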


\begin{proof} 
We apply Theorem \ref{th:2.1} in each of the three steps to a different quadruple of circles.

\begin{figure}[h] 
\[
\includegraphics[scale=1.2]{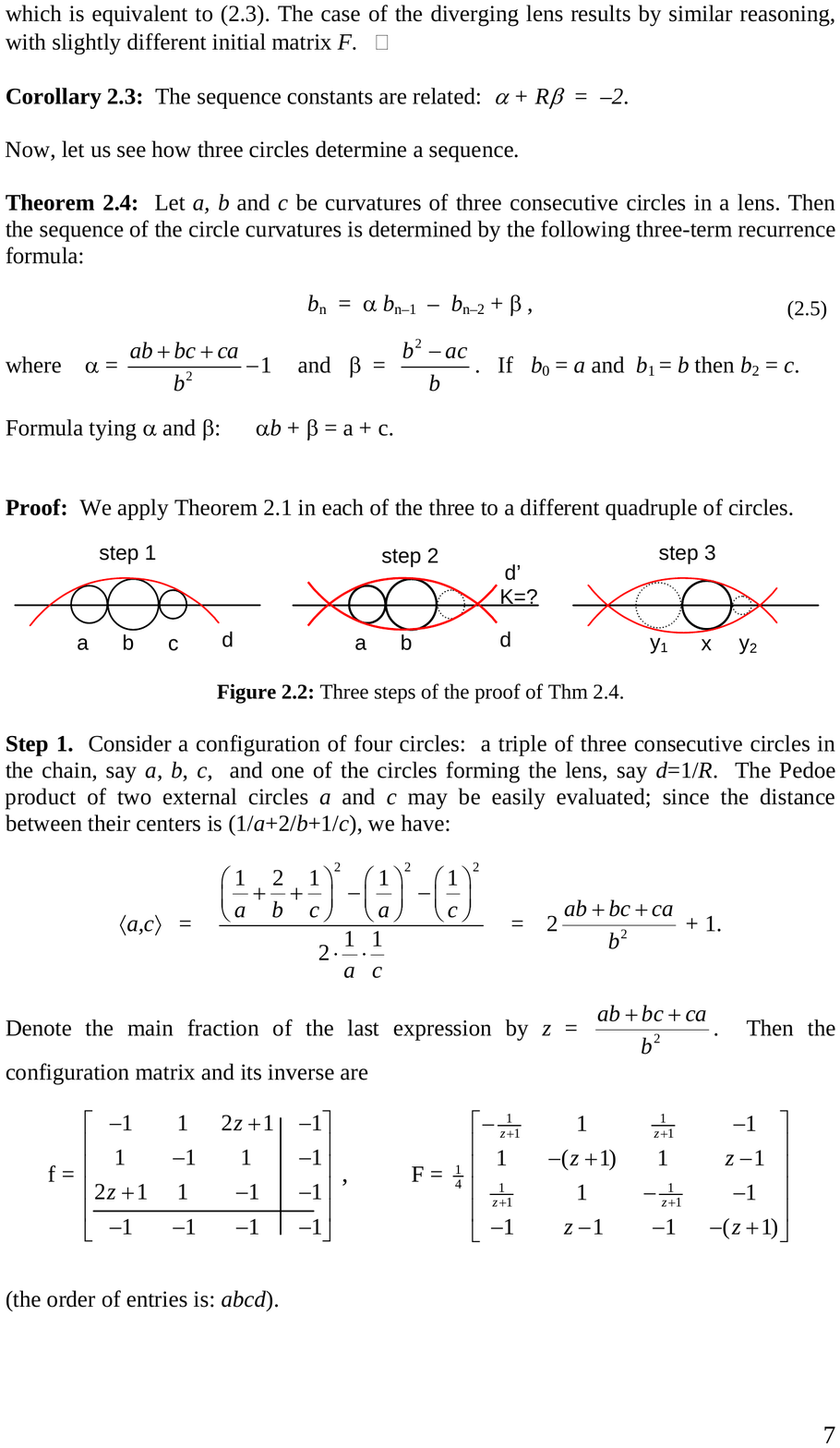} 
\]
\caption{The three steps of the proof of Thm.\ \ref{th:2.4}}.
\label{fig:2.2} 
\end{figure}

\paragraph{Step 1.} 
Consider a configuration of four circles: a triple of three 
consecutive circles in the chain, say $a$, $b$, $c$, plus one circle forming the lens, say $d=1/R$. 
The Pedoe product of two external circles $a$ and $c$ may be easily evaluated; 
since the distance between their centers is $(1/a+2/b+1/c)$, we have:
\[
  \langle a,c\rangle = \frac{\left( {\frac{1}{a}
	+ \frac{2}{b} + \frac{1}{c}} \right)^2
	- \left( {\frac{1}{a}} \right)^2 
	- \left( {\frac{1}{c}} \right)^2}{2\cdot \frac{1}{a}
	\cdot \frac{1}{c}} = 2\frac{ab+bc+ca}{b^2} + 1\,.
\]
Denote the main fraction of the last expression by $z = 
\frac{ab+bc+ca}{b^2}$\,. Then the configuration matrix and its inverse are
\[
  f = \left[ {{\begin{array}{ccc|c}
 {-1}  & \m1  & {2z+1}  & {-1}  \\
 \m1  & {-1}  & \m1  & {-1}  \\
 {2z+1}  & \m1  & {-1}  & {-1}  \\ \cline{1-4}
 {-1}  & {-1}  & {-1}  & {-1}  \\
\end{array} }} \right] , \quad
F = \tfrac{1}{4}\left[ {{\begin{array}{cccc}
 {-\tfrac{1}{z+1}}  & 1  & {\tfrac{1}{z+1}}  & {-1}  
\\
 \m1  & {-(z+1)}  & \m1  & {z-1}  \\
 {\tfrac{1}{z+1}}  & 1  & {-\tfrac{1}{z+1}}  & {-1}  
\\
 {-1}  & {z-1}  & {-1}  & {-(z+1)}  \\
\end{array} }} \right]
\]
(the order of entries is: \textit{abcd}).
Denoting $\mathbf{v} = [a,b,c,d]^{T}$ and solving the quadratic equation 
$\mathbf{v}^{T}F \mathbf{v} = 0$ for $d$ readily leads to
\[
  d=\frac{b(ac-b^2)}{ab+bc+ca+b^2}\,.
\]
This gives us the curvature of each of the two lens circles. 
Now we need to find the product of these two lens circles.

\paragraph{Step 2.} 
Use a quadruple of circles: $a$, $b$, and the two circles forming 
the lens, $d$ and $d'$. The latter two have the same curvature $d=d'$, 
the value of which we know from the previous step. The goal is to find 
the Pedoe product $K= \langle d, d'\rangle$\,. The configuration matrix 
and its inverse are
\[
  f=\left[ {{\begin{array}{cccc}
 {-1}  & \m1  & {-1}  & {-1}  \\
 \m1  & {-1}  & {-1}  & {-1}  \\
 {-1}  & {-1}  & {-1}  & K  \\
 {-1}  & {-1}  & K  & {-1}  \\
\end{array} }} \right] \quad 
F=\tfrac{1}{4}\left[ {{\begin{array}{cccc}
 {-1-K}  & {3-K}  & {-2}  & {-2}  \\[4pt]
 {3-K}  & {-1-K}  & {-2}  & {-2}  \\[4pt]
 {-2}  & {-2}  & {-\tfrac{4}{K+1}}  & {\tfrac{4}{K+1}} 
 \\[4pt]
 {-2}  & {-2}  & {\tfrac{4}{K+1}}  & {-\tfrac{4}{K+1}} 
 \\
\end{array} }} \right]
\]
(the order of indices agrees with $abdd'$). Applying vector $\mathbf{v} = [a,b,d,d]^{T}$ 
to the quadratic equation $\mathbf{v}^{T}F \mathbf{v}=0$ gives
\begin{equation}
\label{eq:2.6}
  K = \frac{8b^2}{(a+b)(b+c)}-1
\end{equation}

\paragraph{Step 3.} 
Now we can either build the matrix for configuration (c) in 
Figure \ref{fig:2.2} and mimic the proof of Theorem \ref{th:2.2}, 
or simply substitute for $K$ from \eqref{eq:2.6} in \eqref{eq:2.4} to get the result. 
\end{proof}

\section{More on lens geometry}  \label{S3}

Although we are mainly interested in the algebraic properties of lens 
sequences, some geometric properties explicate their algebraic behavior. 
Below, we summarize basic facts.

\begin{proposition}  \label{prp:3.1} 
The \textbf{radius} $R$ of the lens circles is determined by three circles 
and may be expressed in terms of the sequence constants $\alpha $ and $\beta $:
\begin{subequations}  \label{eq:3.1}
\begin{equation}    \label{eq:3.1a}
  R = \frac{\alpha + 2}{-\beta} = \frac{(a+b)(b+c)}{(ac-b^2)b}\,.
\end{equation}
The Pedoe \textbf{inner product} of the lens circles is
\begin{equation}    \label{eq:3.1b}
  K = \frac{6 - \alpha}{2+\alpha} = \frac{8b^2}{(a+b)(b+c)} - 1 
    = \frac{1}{2} \left( {\frac{\delta}{R}} \right)^2 - 1 
    = \cos \varphi\,,
\end{equation}
where the last equation is valid if the circles intersect. 
The \textbf{length} $L$ of the lens, if defined, is 
\begin{equation}    \label{eq:3.1c}
  L = 2R\sqrt{\frac{\alpha-2}{\alpha+2}} 
    = -\frac{2\sqrt{\alpha^2-4}}{\beta} 
    = 2\frac{\sqrt {(a+b)(b+c)[(a+b)(b+c)-4b^2]}}{(ac-b^2)b}\,.
\end{equation}
The \textbf{separation} of the lens circles (distance between their centers) 
is
\begin{equation}    \label{eq:3.1e}
  \delta = \frac{4R}{\sqrt {\alpha + 2}}
	= -\frac{4\sqrt {\alpha +2} }{\beta } 
\quad\hbox{and}\quad
  \frac{\delta}{R} = \frac{4}{\sqrt {\alpha + 2}} \,.
\end{equation}
\end{subequations}
\end{proposition}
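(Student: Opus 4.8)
The plan is to obtain every formula algebraically from the results already established in Section \ref{S2} --- principally Theorem \ref{th:2.2}, Corollary \ref{cor:2.3}, equation \eqref{eq:2.6}, and the defining formula \eqref{eq:2.1} for the Pedoe product --- together with two elementary facts from plane geometry. I would begin with the radius. Corollary \ref{cor:2.3} states $\alpha + R\beta = -2$, which rearranges at once to $R = (\alpha+2)/(-\beta)$, giving the first equality of \eqref{eq:3.1a}. For the second equality I would substitute $\alpha = (ab+bc+ca)/b^2 - 1$ and $\beta = (b^2-ac)/b$ from Theorem \ref{th:2.4}, so that $\alpha + 2 = (ab+bc+ca+b^2)/b^2$ and $-\beta = (ac-b^2)/b$; the numerator factors as $ab+bc+ca+b^2 = (a+b)(b+c)$, collapsing the ratio to $(a+b)(b+c)/[(ac-b^2)b]$.

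Next I would treat the Pedoe product $K$. Solving the relation $\alpha = (6-2K)/(1+K)$ of Theorem \ref{th:2.2} for $K$ gives $K(\alpha+2) = 6-\alpha$, hence $K = (6-\alpha)/(2+\alpha)$, the first equality of \eqref{eq:3.1b}; the second equality is precisely \eqref{eq:2.6}. For the third equality I would apply the definition \eqref{eq:2.1} to the two lens circles themselves: both have radius $R$ and their centers are separated by $\delta$, so the product equals $(\delta^2 - 2R^2)/(2R^2) = \tfrac{1}{2}(\delta/R)^2 - 1$. Finally, the identity $K = \cos\varphi$ for intersecting circles is the standard geometric reading of the Pedoe product: the law of cosines for the triangle formed by the two centers and an intersection point shows that $(d^2 - r_1^2 - r_2^2)/(2r_1 r_2)$ is the cosine of the angle between the circles.

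The separation $\delta$ and the length $L$ then follow by combining these. From $K = \tfrac{1}{2}(\delta/R)^2 - 1$ I get $(\delta/R)^2 = 2(1+K)$, and since Theorem \ref{th:2.2} gives $1+K = 8/(\alpha+2)$ this becomes $(\delta/R)^2 = 16/(\alpha+2)$, i.e. $\delta/R = 4/\sqrt{\alpha+2}$, the last assertion of \eqref{eq:3.1e}; multiplying by $R$ and inserting $R = (\alpha+2)/(-\beta)$ yields both forms $\delta = 4R/\sqrt{\alpha+2} = -4\sqrt{\alpha+2}/\beta$. For $L$ I would use the one remaining geometric input: the length of the lens is the common chord of the two congruent circles, so $L = 2\sqrt{R^2 - (\delta/2)^2}$. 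Substituting $\delta/R = 4/\sqrt{\alpha+2}$ gives $L = 2R\sqrt{1 - 4/(\alpha+2)} = 2R\sqrt{(\alpha-2)/(\alpha+2)}$; then $R = (\alpha+2)/(-\beta)$ produces $L = -2\sqrt{\alpha^2-4}/\beta$, and the $a,b,c$-form follows from the radius formula together with the identity $(a+b)(b+c) - 4b^2 = ab+bc+ca - 3b^2 = b^2(\alpha-2)$.

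There is no serious obstacle here; the argument is essentially bookkeeping of identities already set up in Section \ref{S2}. The two places that need care are, first, the two genuinely geometric inputs --- the common-chord formula for $L$ and the interpretation $K = \cos\varphi$ --- which are the only steps not reducible to pure algebra; and second, the sign in $R = (\alpha+2)/(-\beta)$. I would deliberately derive the radius from Corollary \ref{cor:2.3} rather than directly from $\beta = 8A/(1+K)$, since the convention under which the enclosing lens circles enter the configuration fixes the sign, and routing through the corollary lets me avoid tracking it by hand.
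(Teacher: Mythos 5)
Your proposal is correct and follows essentially the route the paper intends: its proof of Proposition \ref{prp:3.1} consists of the single line ``all are direct corollaries of Theorem \ref{th:2.2} and simple geometric constructions,'' and your derivation simply fills in those corollaries --- $R$ from Corollary \ref{cor:2.3}, $K$ from $\alpha=(6-2K)/(1+K)$ together with \eqref{eq:2.6} and the Pedoe definition \eqref{eq:2.1}, $\delta$ from $1+K=8/(\alpha+2)$, and $L$ from the common-chord formula. Your choice to route the radius through Corollary \ref{cor:2.3} rather than through $\beta=8A/(1+K)$ is well judged, since the sign of $A$ in Theorem \ref{th:2.2} as printed is inconsistent with its own proof (the sum of roots there gives $\beta=-8A/(1+K)$), and the corollary sidesteps exactly that ambiguity.
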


\begin{proof} 
All are direct corollaries of Theorem \ref{th:2.2} and simple 
geometric constructions. 
\end{proof} 

Figure \ref{fig:3.1} contains these findings for easy reference.

\begin{figure}[h!] 
\[
\includegraphics[scale=1.1]{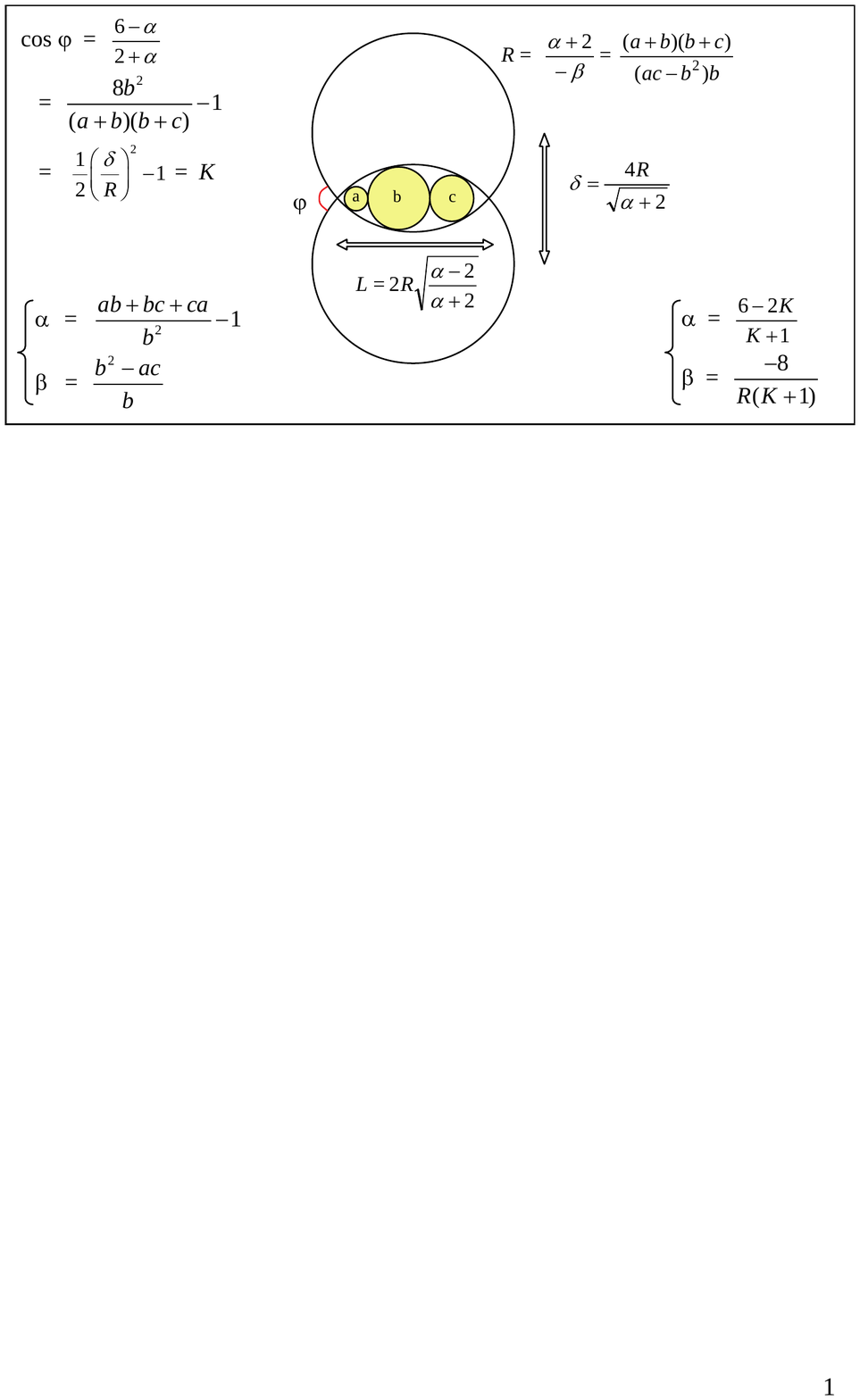} 
\]
\caption{Sequence constants and geometry of a lens}
\label{fig:3.1}
\end{figure}

\vskip.1in
Figure \ref{fig:3.2} categorizes a variety of geometric situations for a lens sequence. 
In the case of converging lenses, when two circles of radius $R$ intersect at angle $\varphi$, 
the recurrence formula is:
\[
  b_{n} = \left( \frac{8}{1 + \cos\varphi} - 2 \right) 
	b_{n-1} - b_{n-2} - \frac{1}{R} \frac{8}{1+\cos \varphi }\,.
\]
This answers the question of which lenses may lead to integer sequences. 
Indeed, denote $n=\frac{8}{1+\cos \varphi}$. Then $\alpha = n-2$, 
$\beta  = - n/R$. For $n$ to be an integer, $n\in \mathbb{N}$, we 
need $\cos \varphi = 8/n - 1$. 
Table \ref{tbl:3.1} shows some values.
 
\begin{figure}[h!] 
\[
\includegraphics[scale=1.2]{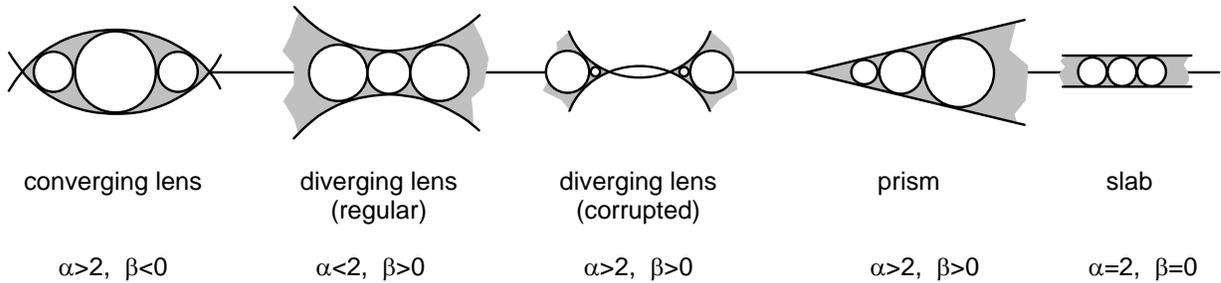} 
\]
\caption{Types of lenses and associated sequences}
\label{fig:3.2}
\end{figure}

\newcommand{\tblA}{%
\small{
\begin{tabular}{l|c|crlc|} \cline{2-6}
\hbox to .6in{} \quad &&&&&\\[-8pt]
&$n$  &$K$              &$\alpha$  	&        	&example \\[4pt] \cline{2-6}
\quad &&&&&\\[-8pt]
&0	&$\infty$	&$-2$	&periodic (order 2)	&$(1,-1)$ \\
&1	&7		&$-1$	&periodic (order 3)	&$(6,3,-2)$ \\ 
&2	&3		&0	&periodic (order 4)	&$(3,6,2,-1)$ \\ 
&3	&5/3		&1	&periodic (order 6)	&$(2,10,15,12,4,-1)$ \\[4pt] \cline{2-6}
\quad &&&&&\\[-8pt]
&4	&1		&2	&Example 1.3		&$(-1,3,15)$ \\ 
&5	&3/4		&3	&			&$(3,2,2,3)$ \\ 
&6	&1/3		&4	&       		&$(2,1,1,2)$ \\ 
&7	&1/7		&5	&			&$(5,2,2,5)$ \\ 
&8	&0		&6	&orthogonal		&$(3,1,1,3)$ \\ 
&9	&$-1/9$		&7	&golden Vesica		&$(2,1,2)$ \\ 
&10	&$-1/5$		&8	&			&$(4,1,1,4)$ \\ 
&11	&$-3/11$	&9	&			& \\ 
&12	&$-1/3$		&10	&			&$(3,1,2)$ \\ 
&13	&$-5/3$		&11	&			& \\ 
&14	&$-3/7$		&12	&			&$(6,1,1,6)$ \\ 
&15	&$-7/15$	&13	&			&$(4,1,2)$ \\ 
&16	&$-1/2$		&14	&Vesica Piscis		&$(3,1,3)$ \\ 
&17	&$-9/17$	&15	&			& \\ 
&18	&$-5/9$		&16	&			&$(8,1,1,8)$ \\ 
&20	&$-3/5$		&17	&			& \\[4pt] \cline{2-6}
\end{tabular}
}
}

\begin{table}[h!]  
\psset{xunit=.1in,yunit=.1in,runit=.05in}
\begin{pspicture}(0,-24)(0,22)
\rput(21,-2){\tblA}
\rput(48,12){$\left. \vbox to .4in{} \right\}$}
\rput(55,13){disjoint lens}
\rput(55,11){circles}
\rput(48,6){\Large$\boldsymbol\leftarrow$}
\rput(55,7){tangent lens}
\rput(55,5){circles}
\rput(48,-9){$\left. \vbox to 1.4in{} \right\}$}
\rput(56,-8){intersecting lens}
\rput(56,-10){circles}
\end{pspicture}
\caption{Admissible values of $\alpha$}
\label{tbl:3.1}
\end{table}

Figure \ref{fig:3.2} relates the geometry of lenses to the values 
of the sequence constant $\alpha $ and the Pedoe product $K$. 
Inspect Tables \ref{tbl:4.1} through \ref{tbl:4.3} (and associated figures) 
in the next section for various examples of lens sequences. 

Note that if $\alpha \leqslant 2$, then only external sequences are possible 
(corresponding to diverging lenses). Moreover, if $\alpha < 2$, 
then the integer sequence must be periodic. If $\alpha > 2$, then we can 
have two families of sequences: inner (inside a converging lens) or outer 
(outside the lens circles, i.e., inside a corrupted diverging lens 
(\textit{corrupted}, because of the missing central part)). 
In the case of the outer sequence we will have exactly one negative entry 
(the most external circle) or two adjacent ``0'' entries (two vertical lines).

\section{Basic algebraic properties of lens sequences}  \label{S4}
Theorem A suggests the following definition:
 
\begin{definition} 	
A {\bf formal sequence} extended from a triplet $(a, b, c)$,
called a {\bf seed},  is defined 
by the following inhomogeneous three-term recurrence formula:
\begin{equation}    \label{eq:1.1}
  b_{n}=\alpha \,b_{n-1} - b_{n-2}+\beta\, ,
\end{equation}
where $\alpha $ and $\beta $ are constants determined by the original 
triple: 
\begin{equation}    \label{eq:1.2}
  \alpha = \frac{ab+bc+ca}{b^2}-1 
  \quad \hbox{and} \quad 
  \beta = \frac{b^2-ac}{b}\,.
\end{equation}
and $b_{0}=a$ and $b_{1}=b$.  (It follows that $b_{2}=c$)\,. 
\end{definition}
The values of the constants $\alpha$
and $\beta$ do not depend on the particular choice of the triplet of 
consecutive terms (seed).
Moreover, if $\alpha>-2$, then the sequence may be interpreted 
in terms of a chain of circles inscribed in a lens made by two disks each of 
curvature $R^{-1}=-\beta/(\alpha+2)$ separated by distance 
$\delta=4R/\sqrt{\alpha+2}$.   
Formal integer lens sequences exist also for $\alpha<-2$
(see Table 4.4 for examples), but in such a case the geometric interpretation 
is unclear as the distance between the lens circles becomes imaginary. 

In general, lens sequences take real values.  
However, if any three terms of a lens sequence are rational, 
so is the whole sequence. 
The question whether a particular seed produces an integer sequence will 
be answered for now this way:

\begin{proposition}[Integrality Criterion 1] \label{prp:ic} 
If $b\vert ac$ and $b^{2}\vert(ab+bc+ca)$, for any $a,b,c\in \mathbb{N}$, 
then a lens sequence extended from $(a,b,c)$ consists of integers.
\end{proposition}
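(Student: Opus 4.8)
The plan is to reduce everything to a single observation: the two divisibility hypotheses are precisely the conditions needed to force the recurrence constants $\alpha$ and $\beta$ of \eqref{eq:1.2} to be integers. Once that is secured, integrality of every term of the bilateral sequence follows from a two-sided induction along the recurrence, with no further arithmetic required.

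First I would verify integrality of $\alpha$. Writing $\alpha = \frac{ab+bc+ca}{b^2} - 1$, the hypothesis $b^{2}\mid(ab+bc+ca)$ makes the displayed fraction an integer, so $\alpha \in \Z$. Next, for $\beta$, I would rewrite $\beta = \frac{b^2-ac}{b} = b - \frac{ac}{b}$; the hypothesis $b \mid ac$ makes the subtracted term an integer, whence $\beta \in \Z$. It is worth recording that the condition on $\alpha$ already implies the one on $\beta$: from $b \mid (ab+bc+ca)$ and the identity $ab+bc+ca = b(a+c)+ac$ one reads off $b \mid ac$. I would nonetheless invoke both hypotheses directly, since each one governs exactly one of the two invariants.

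With $\alpha,\beta \in \Z$ in hand, the induction is immediate. The seed supplies $b_{0}=a$, $b_{1}=b$, $b_{2}=c$, all lying in $\N \subset \Z$. The forward recurrence $b_{n}=\alpha\,b_{n-1}-b_{n-2}+\beta$ exhibits $b_{n}$ as an integer combination of integers, carrying integrality from $\{b_{n-2},b_{n-1}\}$ to $b_{n}$; solving the same relation for the lowest index gives the backward step $b_{n-2}=\alpha\,b_{n-1}-b_{n}+\beta$, which likewise carries integrality from $\{b_{n-1},b_{n}\}$ to $b_{n-2}$. Propagating in both directions from the seed then yields $b_{n}\in\Z$ for every $n\in\Z$.

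There is no genuine obstacle here: the whole content is the recognition that the two stated divisibility relations are exactly the integrality criteria for the invariants $\alpha$ and $\beta$, after which the recurrence does the rest. The only point requiring a moment's care is that one must check integrality is preserved in \emph{both} directions of the bilateral recurrence; but this is automatic once the constants are integral, since the defining relation can be solved for either endpoint symmetrically.
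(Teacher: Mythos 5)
Your proof is correct and takes essentially the paper's intended route: the paper states Proposition \ref{prp:ic} without a written proof, treating it as immediate that the two divisibility hypotheses are exactly what make $\alpha$ and $\beta$ in \eqref{eq:1.2} integers (compare the later remark ``Recall that to insure that $\alpha$ and $\beta$ are integers\dots''), after which the bilateral recurrence, solved forward and backward, propagates integrality from the integer seed. Your side observation that $b^{2}\mid(ab+bc+ca)$ already forces $b\mid ac$ is a correct small refinement not noted in the paper.
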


Here are basic properties of lens sequences:

\begin{proposition}  \label{prp:4.1} 
Let $(b_{n})$ be a lens sequence. Then the following holds:
\begin{enumerate}
\item[(i)] Sequence $(b_{n})$ satisfies a homogeneous 4-term linear recurrence 
formula
\begin{equation}    \label{eq:4.1}
  b_{n} = (\alpha +1) b_{n-1} - (\alpha +1) b_{n-2}+b_{n-3}\,.
\end{equation}
\item[(ii)] If $\alpha \ge 2$ then the sum of the reciprocals converges and equals:
\begin{equation}    \label{eq:4.2}
  \sum\limits_{n=-\infty}^{\infty} {\frac{1}{b_n }} = \frac{2\sqrt {\alpha ^2-4} }{-\beta }
                                                    = \frac{L}{2}\,.
\end{equation}
\item[(iii)] If $\alpha >$2 then the limit of the ratios of consecutive 
terms exists and equals:
\begin{equation}    \label{eq:4.3}
  \lambda = \frac{\alpha +\sqrt {\alpha ^2-4}}{2}
          = \frac{\sqrt {\alpha +2}+\sqrt{\alpha-2}}{2} \,.
\end{equation}
\item[(iv)] If $\alpha\not= 2$, then the lens sequence generated from 
a seed $(a,b,c)$ has the following Binet-like formula 
\begin{equation}    \label{eq:5.1}
  b_{n}=w\lambda^n+\bar {w}\bar {\lambda }^n+\gamma
\end{equation}
where 
\[
  \lambda = \frac{\alpha +\sqrt {\alpha ^2-4} }{2}
	\qquad
	\bar {\lambda } = \frac{\alpha -\sqrt {\alpha ^2-4} }{2}.
\]
and where
\[
  w = \frac{a-2b+c}{2(\alpha -2)}+\frac{c-a}{2(\alpha ^2-4)}
	\sqrt {\alpha ^2-4} , 
	\qquad 
	\gamma = \frac{-\beta }{\alpha -2}
\]
and $\bar {w}$ and $\bar {\lambda }$ denote conjugates of $w$ and $\lambda $ in 
$\mathbb{Q(}\sqrt {\alpha ^2-4} )$, respectively. 
In particular, $(a,b,c)=(b_{-1},b_0,b_1)$.
\end{enumerate}
\end{proposition}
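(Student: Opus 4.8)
The plan is to establish the four parts in the logical order (i), (iv), (iii), (ii), since the Binet formula of part (iv) underlies both the limit in (iii) and the sum in (ii). Part (i) is the quickest: I would eliminate the constant $\beta$ from the defining recurrence by subtracting consecutive instances. Writing $b_n=\alpha b_{n-1}-b_{n-2}+\beta$ and $b_{n-1}=\alpha b_{n-2}-b_{n-3}+\beta$ and subtracting gives $b_n-b_{n-1}=\alpha(b_{n-1}-b_{n-2})-(b_{n-2}-b_{n-3})$, which rearranges at once into \eqref{eq:4.1}.

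For the Binet formula \eqref{eq:5.1} I would not attack the third-order recurrence of (i) head-on, but instead first subtract off the constant (fixed-point) solution. Setting $b_n\equiv\gamma$ in the recurrence forces $\gamma(2-\alpha)=\beta$, so $\gamma=-\beta/(\alpha-2)$ — this is exactly where the hypothesis $\alpha\ne2$ enters. The shifted sequence $c_n:=b_n-\gamma$ then satisfies the homogeneous second-order recurrence $c_{n+1}=\alpha c_n-c_{n-1}$, whose characteristic equation $x^2-\alpha x+1=0$ has roots $\lambda,\bar\lambda$ with $\lambda\bar\lambda=1$ and $\lambda+\bar\lambda=\alpha$; equivalently, the cubic of part (i) factors as $(x-1)(x^2-\alpha x+1)$, the root $x=1$ carrying the constant $\gamma$. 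Hence $c_n=w\lambda^n+\bar w\bar\lambda^n$, and it remains to identify $w$. Using $\lambda^{-1}=\bar\lambda$ together with the seed normalization $(b_{-1},b_0,b_1)=(a,b,c)$, the combinations $w+\bar w=c_0=b-\gamma$ and $(w-\bar w)(\lambda-\bar\lambda)=c_1-c_{-1}=c-a$ are linear in $w,\bar w$. The seed identity $\alpha b+\beta=a+c$ from Theorem \ref{th:2.4} rewrites $b-\gamma$ as $(a-2b+c)/(\alpha-2)$, while $\lambda-\bar\lambda=\sqrt{\alpha^2-4}$; solving the two relations then produces the stated $w$, with $\bar w$ its conjugate.

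Part (iii) is an immediate consequence. For $\alpha>2$ the roots are real and positive with $\lambda>1>\bar\lambda=\lambda^{-1}$, so dividing numerator and denominator of $b_{n+1}/b_n$ by $\lambda^n$ and letting $n\to\infty$ kills the $\bar\lambda^n$ and $\lambda^{-n}$ terms and leaves the limit $\lambda$, provided $w\ne0$. For a genuine lens sequence the curvatures are unbounded, which forces $w\ne0$, and I would record this explicitly. The alternative form $(\sqrt{\alpha+2}+\sqrt{\alpha-2})/2$ is then a one-line algebraic rewriting of $(\alpha+\sqrt{\alpha^2-4})/2$.

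The substantive part is (ii), and I expect the evaluation of the sum to be the main obstacle, since the closed form $L/2$ cannot be read off by summing $\sum 1/(w\lambda^n+\bar w\bar\lambda^n+\gamma)$ term by term. My approach would instead use the geometry that defines the sequence: $1/b_n$ is the radius $r_n$ of the $n$-th inscribed circle, and because consecutive circles are tangent along the axis of the lens, the tangent point $\rho_n$ between circles $n$ and $n+1$ satisfies the telescoping relation $\rho_n-\rho_{n-1}=2r_n=2/b_n$. Summing over $-N\le n\le N$ collapses to $\rho_N-\rho_{-N-1}$, and as $N\to\infty$ the tangent points converge to the two cusps of the lens, whose separation is the lens length $L$; convergence is guaranteed by $r_n\to0$, which holds for $\alpha\ge2$. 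This yields $\sum_n 2/b_n=L$, hence $\sum_n 1/b_n=L/2$, and Proposition \ref{prp:3.1}, equation \eqref{eq:3.1c}, re-expresses $L/2$ in terms of $\alpha$ and $\beta$. The two points needing care are the rigorous justification that the tangent points converge monotonically to the cusps, and the degenerate boundary case $\alpha=2$ (tangent lens circles, $L=0$): there the Binet formula is replaced by the quadratic solution forced by the double root $\lambda=1$, and the bilateral sum collapses to $0$ through cancellation by the single largest, negative-curvature term, consistent with $\sqrt{\alpha^2-4}=0$.
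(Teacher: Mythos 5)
Your proposal is correct and, for parts (i), (ii) and (iv), follows essentially the same route as the paper, which disposes of them tersely: (i) is labelled ``elementary'' (your difference-of-consecutive-recurrences is the intended one-line argument, and your factorization $(x-1)(x^2-\alpha x+1)$ of the cubic checks out); (iv) is proved by exactly your shift $a_n=b_n+\beta/(\alpha-2)$ to the homogeneous recurrence $a_n=\alpha a_{n-1}-a_{n-2}$, ``resolved by the standard procedure,'' and your identification of $w$ via $w+\bar w=b-\gamma$, $(w-\bar w)\sqrt{\alpha^2-4}=c-a$ together with the seed identity $\alpha b+\beta=a+c$ is precisely the computation the paper leaves implicit; (ii) is justified in the paper only by the words ``from the geometry of lenses, cf.\ \eqref{eq:3.1c}'' and a pointer to Figure \ref{fig:4.1} --- your telescoping of tangent points along the axis, $\rho_n-\rho_{n-1}=2/b_n$ summing to the lens length $L$, is the fleshed-out version of that figure. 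The one place you genuinely diverge is (iii): the paper divides the recurrence by $b_{n-1}$, discards $\beta/b_{n-1}$ on the grounds that the sequence diverges, and reads $\lambda$ off the resulting quadratic $\lambda^2-\alpha\lambda+1=0$; this presupposes both divergence and the existence of the limit. Your derivation from the Binet formula, with the explicit caveat that $w\neq 0$ (forced by unboundedness of the curvatures for $\alpha>2$), is strictly more rigorous than the paper's heuristic, and your treatment of the boundary case $\alpha=2$ in (ii) --- where $L=0$ and the bilateral sum cancels against the single negative-curvature term --- is extra care the paper does not supply. The cost is only that (iii) becomes logically dependent on (iv), which your reordering of the parts handles cleanly.
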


\begin{proof} 
(i) Elementary. 
(ii) From the geometry of lenses, cf.\ \eqref{eq:3.1c}. 
See also Figure \ref{fig:4.1}. 
(iii)~Divide the recurrence formula by 
$b_{n-1}$ to get
\[
  b_{n}/b_{n-1} = \alpha - b_{n-2}/b_{n-1}+\beta/b_{n-1}\, .
\]
For large values of $n$, since the sequence is divergent, the last term becomes 
irrelevant and the equation becomes $\lambda =\alpha - 1/\lambda $, 
or simply 
\begin{equation}    \label{eq:4.4}
  \lambda ^{2} - \alpha \lambda + 1 = 0\,,
\end{equation}
with the solution as above. Figure \ref{fig:4.1} provides the geometric insight, 
which also relates $\lambda $ to the lens angle via similar triangles. 
(iv)~Define a new sequence whose entries are shifted by a constant, namely
$a_{n} = b_{n}+\beta/(\alpha -2)$.
The sequence $(a_{n})$ satisfies a homogeneous three-term recurrence formula
$a_{n}=\alpha \, a_{n-1} - a_{n-2}$,
which resolves to \ref{eq:5.1} by the standard procedure. 
\end{proof}


The value of $\lambda $ (given by \eqref{eq:4.3}) will be called the 
\textbf{characteristic constant} of the sequence. The ring over 
rational numbers generated by $\sqrt {\alpha ^2-4} $ plays an 
important role in other properties of lens sequences, 
as we shall soon see. Note that the sequence constant $\alpha $ may 
be expressed in terms of the characteristic constant in a graceful way:
\[
  \alpha =  \lambda +\frac{1}{\lambda}
\]
\begin{figure}[t] 
\[
\includegraphics[scale=.9]{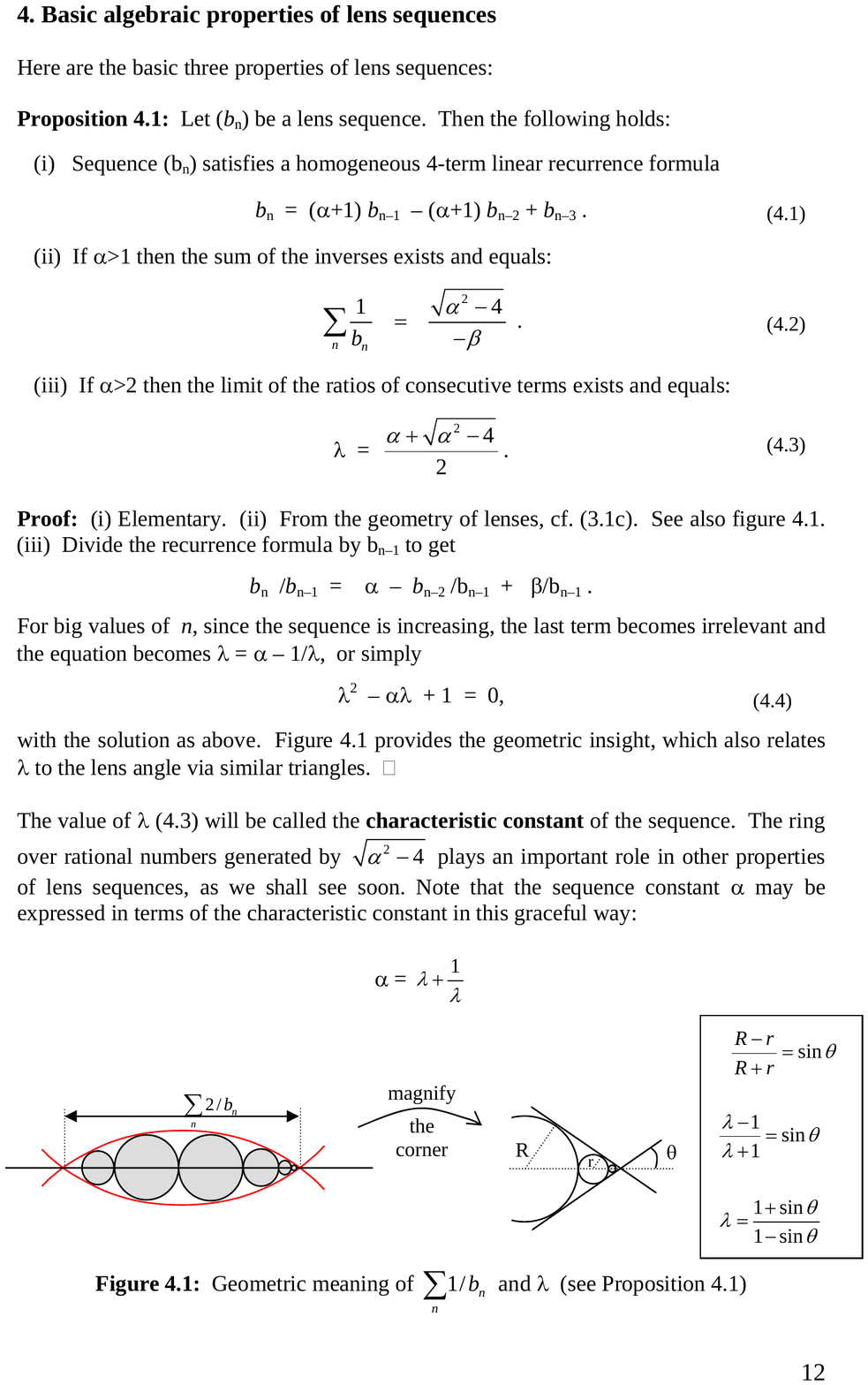} 
\]
\caption{Geometric meaning of $\sum\limits_n {1/b_n }$ and 
$\lambda $ (see Proposition \ref{prp:4.1})}
\label{fig:4.1}
\end{figure}

\subsection*{Alternative generating formulae}

                                                                  %
\def\kill{
\begin{corollary}  	\label{cor:4.2} 
A lens sequence may be determined from each of the following nonlinear three-step formulas:
\begin{alignat}{2}
  b_{n}&=\frac{(\alpha +1)b_{n-1}^2 -b_{n-1} b_{n-2} }{b_{n-1} +b_{n-2} } 
	&\qquad &\hbox{(only $\alpha$ involved)}  
			\label{eq:4.5}			\\
  b_{n}&=\frac{b_{n-1} (b_{n-1} -\beta )}{b_{n-2} } 
	&\qquad &\hbox{(only $\beta$ involved)}
			\label{eq:4.6}
\end{alignat}
\end{corollary}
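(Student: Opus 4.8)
The plan is to obtain both nonlinear recurrences as immediate consequences of the invariance of $\alpha$ and $\beta$ --- the fact, recorded after Theorem~A and again after the Definition, that these constants may be computed from \emph{any} three consecutive terms via \eqref{eq:1.2}. Granting that invariance, each of the two displayed formulas is nothing but the relation \eqref{eq:1.2} written for the triple $(b_{n-2},b_{n-1},b_n)$ and then solved for the unknown last term $b_n$, so the corollary requires no new geometric input.

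Concretely, I would apply \eqref{eq:1.2} to $(b_{n-2},b_{n-1},b_n)$ to record
\begin{equation*}
  \alpha+1 = \frac{b_{n-2}b_{n-1}+b_{n-1}b_n+b_nb_{n-2}}{b_{n-1}^2}, \qquad \beta = \frac{b_{n-1}^2-b_{n-2}b_n}{b_{n-1}}.
\end{equation*}
For the first (only-$\alpha$) formula I clear the denominator in the left relation and collect the terms carrying $b_n$, obtaining $(\alpha+1)b_{n-1}^2-b_{n-1}b_{n-2}=b_n(b_{n-1}+b_{n-2})$; dividing by $b_{n-1}+b_{n-2}$ yields the stated $\beta$-free expression. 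For the second (only-$\beta$) formula I clear the denominator in the right relation to get $b_{n-2}b_n = b_{n-1}^2-\beta b_{n-1}=b_{n-1}(b_{n-1}-\beta)$, and dividing by $b_{n-2}$ yields the stated $\alpha$-free expression. Both manipulations are pure rearrangement, so no difficulty arises at this stage.

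The crux, and the only point needing genuine care were one to prove the corollary from scratch rather than cite the earlier invariance statement, is the invariance itself. The clean route is a first integral: set $a_n=b_n+\beta/(\alpha-2)$ as in the proof of Proposition~\ref{prp:4.1}(iv), so that $a_n=\alpha a_{n-1}-a_{n-2}$, and invoke the standard conserved quantity $a_n^2+a_{n-1}^2-\alpha a_na_{n-1}$ of this homogeneous recurrence. Re-expanding in terms of $b_n$ then shows that
\begin{equation*}
  P_n := b_n^2+b_{n-1}^2-\alpha b_nb_{n-1}-\beta(b_n+b_{n-1})
\end{equation*}
is constant along the sequence. Evaluating it at the base triple $(a,b,c)$ and using $b_2=c$ reduces $P_1$ to $b^2-ac-\beta b$, which vanishes by the very definition of $\beta$ in \eqref{eq:1.2}; hence $P_n\equiv 0$. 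Substituting the basic recurrence \eqref{eq:1.1} into either of the two invariance relations above reduces it exactly to $P_{n-1}=0$, so both relations hold for every $n$, and with them both nonlinear formulas.

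Finally I would flag a caveat that the bare statement glosses over, namely the vanishing denominators: the first formula is valid only where $b_{n-1}+b_{n-2}\ne 0$ and the second only where $b_{n-2}\ne 0$. For generic divergent lens sequences these never vanish, but in the degenerate periodic cases of Table~\ref{tbl:3.1} --- in particular those exhibiting two adjacent zero entries --- the formulas determine the sequence only away from the offending indices, and I would state the conclusion with that proviso.
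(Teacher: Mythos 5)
Your proof is correct and takes essentially the paper's own route: both formulas are just the seed formulas for $\alpha$ and $\beta$ applied to the triple $(b_{n-2},b_{n-1},b_n)$ and solved for $b_n$, with the needed invariance of the constants supplied by Proposition \ref{prp:def}, and your conserved quantity $P_n$ is precisely the compatibility condition \eqref{eq:4.7} of Proposition \ref{prp:4.4} --- indeed the paper's proof of Proposition \ref{prp:def} derives $b_n^2 = b_{n+1}b_{n-1} + \beta b_n$, which is your second formula verbatim. One removable blemish: your first-integral argument via the shift $a_n = b_n + \beta/(\alpha-2)$ silently assumes $\alpha \neq 2$ (cases like Example \ref{exm:1.3} have $\alpha = 2$), but no shift is needed, since $P_n - P_{n-1} = (b_n - b_{n-2})\bigl(b_n + b_{n-2} - \alpha b_{n-1} - \beta\bigr) = 0$ follows directly from the recurrence.
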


\begin{example} 	\label{exm:4.3}
Sequence \seqnum{A064170} of Example \ref{exm:1.2} with 
entries 1, 2, 10, 65, 442, 3026, \dots\ etc. (\textit{Golden Vesica}) 
may be obtained from either of these two formulas: 
\[
  b_{n}=\frac{b_{n-1} (b_{n-1} +3)}{b_{n-2} }, \qquad 
  b_{n} = \frac{8b_{n-1}^2 -b_{n-1} b_{n-2} }{b_{n-1} +b_{n-2} }\,.
\]
\end{example}
}
                                                                                  %

Note that the three term formula \eqref{eq:1.1}, with given coefficients $\alpha $ 
and $\beta $, requires only two initial entries to produce a sequence. 
Yet not \textit{all} such initial values will produce a lens sequence of the 
type under discussion. This is because arbitrary initial values $b_{0}$ and $b_{1}$ do not 
need to be geometrically inscribable into a lens defined by $\alpha $ and $\beta$ 
as two consecutive circles. The following will clarify the situation:

\begin{proposition}[Compatibility condition]  	\label{prp:4.4}  
Two consecutive circles $a$ and $b$ in a lens chain satisfy the following condition: 
\begin{equation}    \label{eq:4.7}
  a^{2}+b^{2}=\alpha ab + \beta (a+b)\, .
\end{equation}
\end{proposition}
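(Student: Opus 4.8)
The plan is to exhibit the quadratic form $Q(x,y) = x^2 + y^2 - \alpha xy - \beta(x+y)$ as a first integral of the recurrence \eqref{eq:1.1}, and then to verify that its value along the sequence is identically zero. The compatibility condition \eqref{eq:4.7} is precisely the assertion that $Q(a,b)=0$ for any two consecutive entries $a=b_{n-1}$, $b=b_{n}$, so it suffices to establish these two facts.

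First I would show that $Q$ is conserved, i.e.\ $Q(b_{n},b_{n+1}) = Q(b_{n-1},b_{n})$ for all $n$. Forming the difference and factoring the difference of squares gives
\[
  Q(b_{n},b_{n+1}) - Q(b_{n-1},b_{n}) = (b_{n+1}-b_{n-1})\bigl(b_{n+1}+b_{n-1}-\alpha b_{n} - \beta\bigr).
\]
The second factor vanishes identically, since rearranging the recurrence \eqref{eq:1.1} gives exactly $b_{n+1}+b_{n-1} = \alpha b_{n} + \beta$. Hence $Q(b_{n-1},b_{n})$ is independent of $n$, and it is enough to evaluate it at a single pair of consecutive terms.

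Second I would evaluate $Q$ at the seed, taking $a=b_{0}$, $b=b_{1}$, $c=b_{2}$, and substituting the expressions for $\alpha$ and $\beta$ from \eqref{eq:1.2}. Computing $\alpha ab$ and $\beta(a+b)$ separately, the terms $\pm a^{2}c/b$ cancel, as do the $\pm ab$ and $\pm ac$ contributions, leaving $\alpha ab + \beta(a+b) = a^{2} + b^{2}$; thus $Q(a,b)=0$. Combined with the conservation just established, $Q(b_{n-1},b_{n})=0$ for every $n$, which is \eqref{eq:4.7}.

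The computation is entirely routine, so the only real content is recognizing \eqref{eq:4.7} as the statement that $Q$ vanishes along the orbit of the recurrence. The mild obstacle is the bookkeeping in the substitution step; organizing it so that the fractional term $a^{2}c/b$ cancels cleanly is the one place where care is needed, but no genuine difficulty arises.
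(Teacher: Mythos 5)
Your proof is correct, and its second step is, in essence, the paper's entire proof: the published argument reads ``Eliminate $c$ from the expressions for $\alpha$ and $\beta$ in \eqref{eq:1.2}, and simplify,'' which is exactly your substitution showing $\alpha ab+\beta(a+b)=a^{2}+b^{2}$ after the $a^{2}c/b$, $ac$, and $ab$ terms cancel. Where you genuinely differ is in how the identity propagates from the seed pair to an \emph{arbitrary} consecutive pair. The paper leans tacitly on the earlier observation that $\alpha$ and $\beta$ are invariants of the sequence, so that any three consecutive terms form a seed and the formulas \eqref{eq:1.2} may be applied at every position $n$. You instead prove a discrete conservation law: the factorization $Q(b_{n},b_{n+1})-Q(b_{n-1},b_{n})=(b_{n+1}-b_{n-1})\bigl(b_{n+1}+b_{n-1}-\alpha b_{n}-\beta\bigr)$, whose second factor is killed by the recurrence \eqref{eq:1.1}, so that $Q$ is constant along the orbit and need only be checked once, at the seed. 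Your route is more self-contained --- it uses the seed formulas exactly once and otherwise needs nothing but the recurrence, it works bilaterally since the factorization holds for all $n\in\Z$, and it exhibits $Q$ as a first integral of \eqref{eq:1.1} with the compatibility condition \eqref{eq:4.7} identifying lens sequences as its zero level set, which is precisely the content the paper later packages into the equivalences of Proposition \ref{prp:def}. The paper's version buys brevity at the cost of an unstated appeal to the shift-invariance of the constants; yours makes that propagation explicit.
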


\begin{proof} 
Eliminate $c$ from the expressions for $\alpha$ and $\beta$ in (\ref{eq:1.2}), and simplify. 
\end{proof}

The above formula may actually be used as an alternative definition of lens sequences.
Indeed,

\begin{proposition}  	\label{prp:def} 
Consider the following properties:
\begin{enumerate}
\item[(a)] Recurrence $b_{n+1} = \alpha b_{n} - b_{n-1} +\beta$\,, for all $n,$  \\[-20pt]
\item[(b)] Constants $\alpha = \frac{b_{n-1}b_{n}+b_{n}b_{n+1}+b_{n+1}b_{n-1}}{b_{n}0^2}-1 
                      \quad \hbox{and} \quad 
                     \beta = \frac{b_n^2-b_{n-1}b_{n+1}}{b_n}$. \\[-20pt]
\item[(c)] $a^{2} + b^{2} = \alpha\, ab +\beta (a+b)$\,, for $a=b_{n}$ and $b=b_{n+1}$.
\end{enumerate}
The following descriptions of a sequence $(b_i)$ are equivalent:
\begin{enumerate}
\item[(i)] Recurrence (a), and constants (b) for some $n$ 
           \quad (definition of a lens sequence); \\[-20pt]
\item[(ii)] Recurrence (a) and compatibility condition (c) for some $n$;  \\[-20pt]
\item[(iii)] Any of the two constant formulas (b) for all $n$; \\[-20pt]
\item[(iv)] Compatibility condition (c)  for all $n$. 
\end{enumerate}
\end{proposition}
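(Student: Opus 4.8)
The plan is to funnel all four descriptions through a single strongest statement, which I label (S): \emph{there exist constants $\alpha,\beta$ with $\alpha_n=\alpha$ and $\beta_n=\beta$ for every $n$}, where $\alpha_n=\frac{b_{n-1}b_n+b_nb_{n+1}+b_{n+1}b_{n-1}}{b_n^2}-1$ and $\beta_n=\frac{b_n^2-b_{n-1}b_{n+1}}{b_n}$ are the right-hand sides of the formulas in (b) read off at index $n$; thus (S) is exactly ``both formulas of (b) for all $n$''. The engine is two algebraic identities valid for \emph{every} sequence of nonzero terms, obtained by the elimination already used for Theorem~\ref{th:2.4} and Proposition~\ref{prp:4.4}: the local recurrence $b_{n+1}+b_{n-1}=\alpha_n b_n+\beta_n$ (identity~I) and the local compatibility $b_{n-1}^2+b_n^2=\alpha_n b_{n-1}b_n+\beta_n(b_{n-1}+b_n)$ (identity~II). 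I will also use an invariance lemma: if the linear recurrence (a) holds for all $n$ with fixed $\alpha,\beta$, then $Q_n:=b_n^2+b_{n+1}^2-\alpha b_nb_{n+1}-\beta(b_n+b_{n+1})$ is independent of $n$, because $Q_{n+1}-Q_n=(b_{n+2}-b_n)\big[(b_{n+2}+b_n)-\alpha b_{n+1}-\beta\big]$ vanishes by (a).

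The implications touching (S) are then short. Trivially (S)$\Rightarrow$(i), since identity~I turns $\alpha_n\equiv\alpha$, $\beta_n\equiv\beta$ into (a), and (b) then holds at every $n$. For (iii)$\Rightarrow$(S) I split into the two formulas. If $\alpha_n\equiv\alpha$, then $b_{n+1}=b_n\big[(\alpha+1)b_n-b_{n-1}\big]/(b_n+b_{n-1})$; substituting this and the analogous expression for $b_{n+2}$ into $b_{n+2}-(\alpha+1)b_{n+1}+(\alpha+1)b_n-b_{n-1}$ makes it collapse to $0$, i.e.\ the homogeneous four-term recurrence \eqref{eq:4.1} holds, and feeding this back through identity~I forces $\beta_n$ constant. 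If instead $\beta_n\equiv\beta$, then $b_{n-1}b_{n+1}=b_n(b_n-\beta)$, and subtracting this relation at $n$ and at $n-1$ gives $\alpha_n=\alpha_{n-1}$; either way one reaches (S). Finally (i)$\Rightarrow$(S) and (ii)$\Rightarrow$(S) share one mechanism: in both cases (a) holds for all $n$, so $Q_n$ is constant, and it equals $0$ --- in case (i) because identity~II at the index where (b) holds gives $Q=0$, in case (ii) because (c) at one index is $Q=0$ directly. Then comparing identity~I with (a) yields $(\alpha-\alpha_n)b_n=\beta_n-\beta$, while subtracting $Q_{n-1}=0$ from identity~II gives $0=(\alpha_n-\alpha)b_{n-1}b_n+(\beta_n-\beta)(b_{n-1}+b_n)$; eliminating $\beta_n-\beta$ between the two collapses everything to $(\alpha_n-\alpha)b_n^2=0$, hence $\alpha_n\equiv\alpha$ and $\beta_n\equiv\beta$. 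Since (S)$\Rightarrow$(i) and (S)$\Rightarrow$(ii) are clear, this already gives (i)$\Leftrightarrow$(ii)$\Leftrightarrow$(iii)$\Leftrightarrow$(S).

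It remains to incorporate (iv), and this is where I expect the real obstacle. Here (S)$\Rightarrow$(iv) is immediate: identity~II at every index says exactly that (c) holds for every consecutive pair. The converse (iv)$\Rightarrow$(S) is delicate. From $Q_n=0$ for all $n$, subtracting consecutive instances gives $(b_{n+1}-b_{n-1})\big[(b_{n+1}+b_{n-1})-\alpha b_n-\beta\big]=0$ for every $n$, so at each index \emph{either} the recurrence (a) holds \emph{or} $b_{n+1}=b_{n-1}$. At the indices of the first kind (a) holds, and one finishes through (ii)$\Rightarrow$(S); the trouble is the degenerate indices with $b_{n+1}=b_{n-1}$, which are not vacuous --- they occur at the symmetry point of every symmetric lens sequence. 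They are genuinely dangerous, because $Q_{n-1}=0$ and $Q_n=0$ only assert that $b_{n-1}$ and $b_{n+1}$ are roots of $t^2-(\alpha b_n+\beta)t+(b_n^2-\beta b_n)$, and this is \emph{also} satisfied when they coincide at one simple root; such a spurious ``folded'' choice yields a bona-fide bilateral sequence obeying (c) everywhere yet failing (a) at that index, so (iv) as stated is not literally equivalent to the rest. The resolution I intend is that for a genuine lens chain the two neighbours of $b_n$ are the \emph{two} roots of that quadratic, so $b_{n+1}=b_{n-1}$ forces a \emph{double} root and hence $2b_{n-1}=\alpha b_n+\beta$, keeping (a) alive even at the degenerate index; this is precisely the general-position clause already invoked for Theorem~\ref{th:2.1}, and is the hypothesis that must be attached to (iv). Once (a) is secured at all indices, (ii)$\Rightarrow$(S) closes the cycle and all four descriptions coincide with (S).
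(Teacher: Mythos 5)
Your argument is correct in its core, and it takes a genuinely different route from the paper's. The paper's entire proof consists of the single computation (ii)$\Rightarrow$(b): use the recurrence to replace $\alpha b_n+\beta$ by $b_{n+1}+b_{n-1}$ inside the compatibility relation, cancel $b_{n+1}^2$, read off $\beta=(b_n^2-b_{n-1}b_{n+1})/b_n$, back-substitute for $\alpha$ --- and then dismisses the rest with ``the other equivalences follow easily.'' You instead funnel everything through (S) using two identities valid for \emph{any} nonvanishing sequence (your I and II; both check out) plus the conserved quantity $Q_n$, whose invariance under (a) is precisely the telescoping the paper performs once, at a single index. This buys real completeness: the elimination of $\beta_n-\beta$ between $(\alpha-\alpha_n)b_n=\beta_n-\beta$ and $0=(\alpha_n-\alpha)b_{n-1}b_n+(\beta_n-\beta)(b_{n-1}+b_n)$, collapsing to $(\alpha_n-\alpha)b_n^2=0$, is a clean replacement for the paper's hand-wave, and both branches of your (iii)$\Rightarrow$(S) (the $\alpha$-formula forcing \eqref{eq:4.1} and then constancy of $\beta_n$ via identity I; the $\beta$-formula giving $\alpha_n=\alpha_{n-1}$ by subtracting $b_{n-1}b_{n+1}=b_n(b_n-\beta)$ at consecutive indices) verify correctly.

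Most importantly, your diagnosis of (iv) is right, and it exposes a genuine flaw hidden under the paper's ``follow easily.'' An explicit instance of your folded mechanism: reflect the chain of Example \ref{exm:1.4} at the term $1$ to get $\ldots,232,24,3,1,3,24,232,\ldots$; every consecutive pair satisfies (c) with $\alpha=10$, $\beta=-5$ (e.g.\ $1+9=30-20$ and $9+576=720-135$), yet at the fold $b_1+b_{-1}=6\neq\alpha\cdot 1+\beta=5$, so (a) and (b) fail there --- the two equal neighbours occupy one simple root of $t^2-5t+6=(t-2)(t-3)$. Genuine symmetric lens sequences survive because their centre is a double root (for $(3,1,3)$ of Example \ref{exm:1.1} the quadratic is $t^2-6t+9=(t-3)^2$), so your proposed double-root/general-position clause is exactly the right repair. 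One caveat about your own argument: in the $\alpha$-branch of (iii)$\Rightarrow$(S) you divide by $b_{n-1}+b_n$, and that degeneracy is not vacuous. When $\alpha=-2$, the condition $\alpha_n\equiv-2$ is equivalent to $(b_n+b_{n-1})(b_n+b_{n+1})=0$ for all $n$, and the sequence $\ldots,1,-1,1,-1,2,-2,2,-2,\ldots$ has $\alpha_n\equiv-2$ while $\beta_n$ jumps from $0$ to $1$ across the junction; so the single-formula reading of (iii) admits a fold-type failure of its own and needs an analogous side condition (here necessarily non-geometric, since $\alpha\le-2$ admits no lens realization). With those two degeneracies fenced off, your scheme proves strictly more than the paper's proof establishes.
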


\begin{proof} 
Let us show that the  
compatibility condition together with the 
recurrence theorem imply our standard formulas for both $\alpha$ and $\beta$.
Starting with (ii) we get
\[
\begin{aligned}
  b_{n+1}^2+b_n^2 &= \alpha \,b_n b_{n+1} +\beta (b_n +b_{n+1} ) \\
   &= b_{n+1} (\alpha \,b_n +\beta )+\beta b_n   \\
   &= b_{n+1} (b_{n+1} +b_{n-1} )+\beta b_n \,.
\end{aligned}
\]
Subtracting $b_{n+1}^{2}$ from both sides, we get
$
  b_n ^2=b_{n+1} b_{n-1} +\beta\, b_n \, ,
$
which gives
\[
  \beta = \frac{b_n^2 -b_{n-1} b_{n+1} }{b_n }\,.
\]
To get $\alpha $, substitute this result in the compatibility condition and 
simplify.  The other equivalences follow easily. 
\end{proof}

                                                           %
\def\killll{ 
\begin{corollary} 	\label{cor:4.7} 
If $b_{n}$ is an entry in the lens sequence with constants $\alpha$ and $\beta$, 
then the two adjacent entries are
\begin{equation}    \label{eq:4.8}
  b_{n\pm 1} = \frac{b_n \alpha +\beta \pm \sqrt {(\alpha ^2-4)\;
	b_n^2+2(\alpha +2)\beta \;b_n +\beta ^2} }{2}
\end{equation}
{\normalfont(}two solutions are associated with the two possible signs, corresponding 
to the two directions for the sequence{\normalfont)}. {\normalfont[}Compare with the 
result \eqref{eq:2.4}{\normalfont]}. 
\end{corollary}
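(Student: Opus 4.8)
The plan is to read the compatibility condition of Proposition~\ref{prp:4.4} as a quadratic equation in a single unknown neighbor, holding the middle term $b_n$ fixed. The crucial structural fact is that equation~\eqref{eq:4.7}, $a^2+b^2=\alpha ab+\beta(a+b)$, is symmetric in its two arguments and, by Proposition~\ref{prp:def}, holds for \emph{every} pair of consecutive terms of a lens sequence. In particular it holds for both the pair $(b_{n-1},b_n)$ and the pair $(b_n,b_{n+1})$. Setting $b=b_n$ and writing $a$ for a generic neighbor, both instances collapse to the \emph{same} quadratic
\[
  a^2-(\alpha b_n+\beta)\,a+(b_n^2-\beta b_n)=0,
\]
so $b_{n-1}$ and $b_{n+1}$ are precisely its two roots.

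With the two neighbors identified as the roots of this quadratic, the formula is immediate from the quadratic formula: the roots are $a=\tfrac{1}{2}\bigl((\alpha b_n+\beta)\pm\sqrt{D}\bigr)$ with $D=(\alpha b_n+\beta)^2-4(b_n^2-\beta b_n)$. A routine expansion shows $D=(\alpha^2-4)\,b_n^2+2(\alpha+2)\beta\,b_n+\beta^2$, which is exactly the radicand in the statement; attaching the $\pm$ to separate the two roots then yields the claimed expression for $b_{n\pm1}$. The two signs correspond to the two directions along the chain, in parallel with the geometric solution~\eqref{eq:2.4}.

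As a consistency check --- and an alternative route that bypasses the quadratic formula --- I would apply Vieta's relations to the same quadratic. The sum of its roots, $b_{n-1}+b_{n+1}=\alpha b_n+\beta$, recovers the recurrence, while the product, $b_{n-1}b_{n+1}=b_n^2-\beta b_n$, rearranges to the formula $\beta=(b_n^2-b_{n-1}b_{n+1})/b_n$. This confirms that the two roots are indeed the correct neighbors and ties the result back to the defining data of the sequence.

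The one point needing care --- the main, and only mild, obstacle --- is the degenerate case $b_{n-1}=b_{n+1}$, a turning point of the sequence such as the symmetric center of a palindromic lens sequence. There the discriminant $D$ vanishes, the quadratic has a double root, and the two neighbors coincide, so the $\pm$ collapses to a single value; the formula remains correct, but the two roots are genuinely equal rather than distinct. (One checks, for instance, that the Vesica center $b_n=1$ with $\alpha=14$, $\beta=-8$ gives $D=0$ and the common neighbor $3$.) Away from this locus every step is routine symmetric-function algebra, so I anticipate no real difficulty.
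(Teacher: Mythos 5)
Your proof is correct and takes essentially the route the paper intends: the corollary is placed immediately after Proposition~\ref{prp:4.4} precisely as the result of solving the compatibility condition \eqref{eq:4.7} as a quadratic in the unknown neighbor with $b_n$ fixed, which is exactly your argument (the paper itself states the corollary without writing out this computation). Your Vieta-sum observation, which pins down that $b_{n-1}$ and $b_{n+1}$ together exhaust both roots, and your treatment of the double-root case at a symmetric center are welcome extra care, not a different method.
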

}
                                                        %

Here is yet another intriguing formula that will prove itself handy later.

\begin{proposition}  	\label{prp:4.8} 
Constant $\alpha $ has an alternative form involving any four consecutive entries 
of a lens sequence:
\begin{equation}    \label{eq:4.9}
  \alpha = \frac{b_{n-1} }{b_n }+\frac{b_{n+2} }{b_{n+1} }\,.
\end{equation}
\end{proposition}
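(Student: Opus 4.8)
The plan is to clear denominators and reduce the claimed identity \eqref{eq:4.9} to a polynomial relation that can be dispatched using the recurrence \eqref{eq:1.1} together with the compatibility condition \eqref{eq:4.7}. Multiplying \eqref{eq:4.9} through by $b_n b_{n+1}$, the goal becomes to establish
\[
  \alpha\, b_n b_{n+1} = b_{n-1} b_{n+1} + b_n b_{n+2}\,.
\]

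First I would use the recurrence to eliminate the two ``outer'' terms in favor of the central pair. Applying \eqref{eq:1.1} at index $n$ gives $b_{n-1} = \alpha b_n - b_{n+1} + \beta$, and applying it at index $n+1$ gives $b_{n+2} = \alpha b_{n+1} - b_n + \beta$. Substituting both into the right-hand side and collecting terms yields
\[
  b_{n-1} b_{n+1} + b_n b_{n+2} = 2\alpha\, b_n b_{n+1} - (b_n^2 + b_{n+1}^2) + \beta(b_n + b_{n+1})\,.
\]

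Now the compatibility condition of Proposition \ref{prp:4.4}, applied to the consecutive pair $a = b_n$, $b = b_{n+1}$, reads $b_n^2 + b_{n+1}^2 = \alpha b_n b_{n+1} + \beta(b_n + b_{n+1})$. Substituting this expression for the quadratic term makes the two $\beta(b_n + b_{n+1})$ contributions cancel exactly, leaving $\alpha\, b_n b_{n+1}$, which is precisely the target. Dividing back by $b_n b_{n+1}$ (legitimate since the denominators in \eqref{eq:4.9} are assumed nonzero) recovers the claim.

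There is no serious obstacle here; the only point to watch is the bookkeeping of the inhomogeneous $\beta$-terms, which must cancel rather than accumulate. The conceptual content is simply the recognition that, after clearing denominators, \eqref{eq:4.9} is equivalent to the compatibility condition, so the statement is really a repackaging of Proposition \ref{prp:4.4} combined with one application of the recurrence in each direction. An essentially equivalent route would be to substitute the recurrence-expressions for $b_{n-1}$ and $b_{n+2}$ directly into the two fractions, reduce the symmetric sum $\tfrac{b_{n+1}}{b_n} + \tfrac{b_n}{b_{n+1}}$ via \eqref{eq:4.7}, and observe the same cancellation of the $\beta$ contributions.
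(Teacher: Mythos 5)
Your proof is correct, but it follows a different route from the paper's. The paper's argument starts from the invariance of the $\beta$-formula: writing $\beta = \frac{b_n^2 - b_{n-1}b_{n+1}}{b_n}$ as $b_n - \beta = \frac{b_{n-1}b_{n+1}}{b_n}$, it applies the recurrence \emph{once} (at index $n+1$, giving $b_n - \beta = \alpha b_{n+1} - b_{n+2}$) and divides by $b_{n+1}$ to extract $\alpha$ directly. You instead clear denominators to the symmetric polynomial form $\alpha\, b_n b_{n+1} = b_{n-1}b_{n+1} + b_n b_{n+2}$, apply the recurrence at \emph{both} indices $n$ and $n+1$, and then invoke the compatibility condition \eqref{eq:4.7} of Proposition \ref{prp:4.4} to cancel the quadratic and $\beta$-terms. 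The two ingredients are logically interchangeable here --- in the paper, Proposition \ref{prp:def} shows that the compatibility condition plus the recurrence yields the $\beta$-formula, which is exactly the identity the paper's proof consumes --- so your argument in effect re-derives that implication inline. What your route buys is the structural observation that \eqref{eq:4.9}, once denominators are cleared, is precisely the compatibility condition repackaged (and your computation is manifestly symmetric under $n \leftrightarrow n+1$); what the paper's route buys is brevity, needing only the single invariant $\beta = \frac{b_n^2 - b_{n-1}b_{n+1}}{b_n}$ and one use of the recurrence. Both arguments correctly require $b_n, b_{n+1} \neq 0$, which you note.
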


\begin{proof} 
Start with the formula for $\beta $ and express it as follows:
\[
  \beta = \frac{b_n^2 -b_{n+1} b_{n-1} }{b_n } 
	= b_n -\frac{b_{n+1} b_{n-1} }{b_n }  
	\quad\Rightarrow\quad
   b_{n} - \beta  = \frac{b_{n+1} b_{n-1} }{b_n }\,.
\]
Use the recurrence formula to modify the left-hand side of the last 
equation, 
\[
  \alpha\, b_{n+1} -b_{n+2} = \frac{b_{n+1} b_{n-1} }{b_n }\,.
\]
Now extract $\alpha $ to get \ref{eq:4.9}.
\end{proof}
                                         %
\def\killl{
\begin{corollary}  	\label{cor:4.9} 
Lens sequences satisfy the four-step recurrence formula
\[
  b_{n+2 }= b_{n+1} (\alpha\, b_{n}-b_{n-1})/b_{n}\, .
\]
\end{corollary}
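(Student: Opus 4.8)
The plan is to obtain this four-step recurrence as an immediate algebraic rearrangement of Proposition \ref{prp:4.8}, which already expresses $\alpha$ in terms of four consecutive entries. Since equation \eqref{eq:4.9} reads $\alpha = b_{n-1}/b_n + b_{n+2}/b_{n+1}$, the claimed identity $b_{n+2} = b_{n+1}(\alpha\, b_n - b_{n-1})/b_n$ is nothing more than \eqref{eq:4.9} solved for $b_{n+2}$, so no new geometric or recursive input is required beyond what has already been established.

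Concretely, I would first isolate the last summand in \eqref{eq:4.9}, writing $b_{n+2}/b_{n+1} = \alpha - b_{n-1}/b_n$, and then place the right-hand side over the common denominator $b_n$ to obtain $b_{n+2}/b_{n+1} = (\alpha\, b_n - b_{n-1})/b_n$. Multiplying both sides by $b_{n+1}$ then yields the asserted formula directly. An equivalent route, bypassing an explicit appeal to Proposition \ref{prp:4.8}, is to start from the intermediate identity $\alpha\, b_{n+1} - b_{n+2} = b_{n-1} b_{n+1}/b_n$ that appears inside the proof of that proposition and simply solve it for $b_{n+2}$; the two derivations are formally the same computation.

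The only point requiring care is the appearance of $b_n$ in the denominator: the manipulation is legitimate precisely when $b_n \neq 0$, which is consistent with the standing nonvanishing hypothesis on the middle term of a seed in Theorem A, so the statement holds at every index $n$ with $b_n \neq 0$. I do not expect any genuine obstacle here, since the content reduces to a single division; thus the substantive task is merely to record the $b_n \neq 0$ caveat and to observe that the rearrangement is reversible, which shows that this nonlinear four-step recurrence encodes exactly the same information as the constant $\alpha$ together with three consecutive terms.
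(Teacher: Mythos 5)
Your proposal is correct and coincides with the paper's intended argument: the statement is an immediate corollary of Proposition \ref{prp:4.8}, obtained by solving \eqref{eq:4.9} for $b_{n+2}$ --- equivalently, by rearranging the intermediate identity $\alpha\, b_{n+1}-b_{n+2}=b_{n+1}b_{n-1}/b_n$ that appears inside that proposition's proof, exactly as you do. Your explicit note that the manipulation requires $b_n\neq 0$ is consistent with the paper's standing hypothesis on nonvanishing middle terms and adds nothing beyond the paper's own route.
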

}
                                     %

\subsection*{Integrality condition}

Lens sequences are self-generating in the sense that any three consecutive entries $(a,b,c)$, 
a seed,  determine the whole sequence (unless $b=0$). 
The question is how to choose seeds $(a,b,c)$ in 
order to obtain  lens sequences that are \textit{integer}.
Below we give only a partial answer to this problem of integrality conditions; 
the last section will provide the solution.

\begin{definition} 	\label{def:4.10} 
An integer lens sequence is \textit{primitive} if the common divisor of three consecutive 
entries is 1. 
\end{definition}

\begin{proposition}  \label{prp:4.11} 
If $\gcd(b_{k}, b_{k+1}, b_{k+2})=n$ holds for some $k$, then it holds for all $k \in \Z$. 
\end{proposition}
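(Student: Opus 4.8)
The plan is to prove the sharper statement that $\gcd(b_k,b_{k+1},b_{k+2})$ equals the content $d:=\gcd\{b_m : m\in\Z\}$ of the entire bilateral sequence for every $k$; the Proposition follows at once, since the right-hand side is manifestly independent of $k$. Fixing a prime $p$ and writing $v_m:=v_p(b_m)$, this reduces to showing that the minimal valuation $\mu_p:=\min_m v_m$ is attained inside every block of three consecutive indices — equivalently, that no three consecutive terms all satisfy $v_m>\mu_p$. (Reversibility of the recurrence $b_{n-1}=\alpha b_n-b_{n+1}+\beta$ means there is nothing asymmetric here.)

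The engine is the Cassini-type identity obtained inside the proof of Proposition \ref{prp:4.8},
\[
  b_{m-1}b_{m+1}-b_m^{2}=-\beta\,b_m\qquad(m\in\Z),
\]
combined with the fact that the sequence is integral \emph{in both directions}, so $v_m\ge 0$ for all $m\in\Z$. I would stress at the outset that this two-sidedness is indispensable: the one-sided ``powers of two'' sequence of Example \ref{exm:1.5}(i) has $\gcd(b_k,b_{k+1},b_{k+2})$ growing with $k$, and it evades the Proposition only because its backward extension $\tfrac12,\tfrac14,\dots$ leaves $\Z$. Any proof that does not exploit $v_m\ge0$ for $m\to-\infty$ is therefore doomed.

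Fix $p$ and set $c=v_p(\beta)$ and $u_m=v_m-\mu_p$. First I would dispose of the constant-valuation case, where every window-gcd has valuation $\mu_p$ and we are done; this also absorbs the degenerate $\beta=0$ case, since then the identity reads $b_{m-1}b_{m+1}=b_m^2$, forcing a geometric sequence, hence constant magnitude under two-sided integrality. Otherwise some minimal term has a strictly larger neighbour, and applying $v_p$ to the identity at that index — where the two products $b_{m-1}b_{m+1}$ and $b_m^2$ have \emph{unequal} valuation, so no cancellation occurs — pins down $c=\mu_p$. Then at any ``high'' index $m$ (one with $v_m>c=\mu_p$) the right-hand side has valuation $c+v_m<2v_m=v_p(b_m^{2})$, which forces $v_p(b_{m-1}b_{m+1})=c+v_m$, that is
\[
  u_{m+1}=u_m-u_{m-1}.
\]
This homogeneous recurrence has characteristic roots the primitive sixth roots of unity, so along a maximal run $[s,t]$ of high indices, where $u_{s-1}=0$, one gets $u_{s+1}=u_s$ and then $u_{s+2}=0$; hence $t\le s+1$ and no run of high indices exceeds length two.

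Consequently every block of three consecutive indices meets the minimal-valuation set, so $\min(v_k,v_{k+1},v_{k+2})=\mu_p$ for all $k$ and all $p$, which is exactly the claim. The main obstacle is the step that produces the linear recurrence for $u_m$: one must first use two-sided integrality to exclude the ``exceptional'' valuation pattern $v_{m-1}+v_{m+1}=2v_m$ (valuation cancellation in the identity), for it is precisely that pattern which, left unchecked, lets the window-gcd jump — as it does in the one-sided examples. A minor bookkeeping point I would treat separately is the occurrence of zero terms (the degenerate ``$0,0$'' outer sequences of Table \ref{tbl:3.1}), where valuations are infinite and the divisibility is verified directly.
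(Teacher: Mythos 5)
Your argument is correct in substance, but it takes a genuinely different and much heavier route than the paper's. The paper's proof is three lines: if $n$ divides the window $(b_k,b_{k+1},b_{k+2})$, then $n$ divides $\beta$ (read off from the recurrence, using that $\alpha$ is an integer), hence $n$ divides the neighboring terms $b_{k+3}$ and $b_{k-1}$, and two-sided induction shows $n$ divides every term; applied to $n_k=\gcd(b_k,b_{k+1},b_{k+2})$ this gives $n_k\mid n_j$ for all $j,k$, which is exactly your sharper statement that every window gcd equals the global content. Your $p$-adic route through the Cassini-type identity $b_m^2-b_{m-1}b_{m+1}=\beta b_m$ reaches the same sharper conclusion and buys some real generality: it never invokes integrality of $\alpha$, only that $\beta$ is a fixed rational and the terms are integers in both directions, whereas the paper's step ``$n$ divides $\beta$'' tacitly needs $\alpha\in\Z$ (note that $n\mid b_{k+1}$ together with $\beta=(b_{k+1}^2-b_kb_{k+2})/b_{k+1}$ does not by itself yield $n\mid\beta$). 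Your emphasis on two-sidedness is also well placed: the paper itself notes in Table \ref{tbl:4.2} that \seqnum{A000079} is integer in one direction only, and it is precisely the backward integrality $v_m\ge 0$ for $m\to-\infty$ that your valuation argument exploits.

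One loose end to repair: your run analysis treats only a maximal run $[s,t]$ of high indices possessing a finite left endpoint (where $u_{s-1}=0$), so a run unbounded to the left is not formally excluded; the opening appeal to reversibility gestures at this but is not carried out. Your own recurrence closes the gap instantly and in fact simplifies the whole step: if three consecutive indices $m-1$, $m$, $m+1$ were all high, then applying $u_{j+1}=u_j-u_{j-1}$ at $j=m$ and at $j=m+1$ gives $u_{m+2}=u_{m+1}-u_m=-u_{m-1}<0$, contradicting $u\ge 0$ --- so no three consecutive high indices exist at all, with no case split on run endpoints (a doubly infinite run is excluded anyway because $\mu_p$ is attained, valuations being non-negative integers). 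With that one-line replacement, and with the deferred zero-term verification actually carried out (e.g., windows such as $(0,0,1)$ from the seed $(0,0,1,3)$, checked directly), your proof is complete.
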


\begin{proof} If $n$ divides each of $(b_{k}, b_{k+1}, b_{k+2})$, then it divides $\beta $ 
of the recurrence formula. Hence it divides the neighboring terms $b_{k+2}$ and $b_{k-1}$. 
By induction, $n$ divides every term of the sequence. 
\end{proof}

Recall that to insure that $\alpha $ and $\beta $ are integers, we need to 
choose $(a,b,c)$ so that $b\vert ac$ and $b^{2}\vert ab+bc+ca$, or,  equivalently, 
that $b^{2}\vert (a+b)(b+c)$. Thus triples of the form $(a, 1, c)$ always 
generate integer sequences for any $a,c\in \mathbb{N}$\,. 
For now, let us review the following families of integer lens sequences:

\paragraph{A.} A sequence is called \textbf{central} if it contains a triple of the 
form $(a,b,a)$. Only $b=\pm 1$ leads to primitive integer sequences.

\paragraph{B.} A lens sequence is called \textbf{bicentral}, if it contains a 
quadruplet of the form $(a,b,b,a)$. Only if $b$ is chosen from $\{0, 1, 2\}$, does 
a primitive integer sequence result. (For the case $b=0$, the seed needs to be chosen in 
the form $(0, a, c)$). 

\vskip.1in
In either case A or case B, the sequence is called \textbf{symmetric}. 
Table \ref{tbl:4.1} shows examples of symmetric lenses for small values of the 
initial terms. (Only the right tail is displayed.)

\paragraph{C.} Here is a method of getting a not-necessarily symmetric integer sequence: 
choose arbitrarily a couple $(a,b)$ and some integer $k$ (only one of $a$ or $b$  
can be negative). Then a seed $(a,b,c)$ with
\begin{equation}    \label{eq:4.10}
  c = b(bk-1)\,,
\end{equation}
will generate an integer sequence. Indeed, calculate the recurrence 
constants $\alpha $ and $\beta $ from \eqref{eq:4.10} to get the integer values
\begin{equation}    \label{eq:4.11}
\begin{aligned}
  \alpha &= (a+b)k - 2 	\\
  \beta &= (a+b) - ab \, k\,.
\end{aligned}
\end{equation}
The triple of integers $[a, b; k]$ will 
be called the \textbf{label} of a lens sequence of this type.
(Note that it includes the symmetric lens sequences as a special case;
for examples of non-symmetric sequences, see Table \ref{tbl:4.2}.)  \\

The pair $(a,b)$ in the label may be chosen so that it contains the smallest element 
of the sequence. \\

\clearpage

\begin{table}  	
\footnotesize  
\begin{tabular}{lllclll} \hline
\quad &&&&& \\[-9pt]
   &central \\[-3pt]
   &elements &constants & OEIS nr &sequence & label and symbol \\[3pt]  
\hline
\ &&&&& \\[-9pt]
\multicolumn{3}{l}{Central sequences} \\[3pt]
1. &$(2, 1, 2)$ &$\alpha =7, \beta =-3$      &\seqnum{A064170} 
   &{\scriptsize $1, 2, 10, 65, 442, 3026, 20737, 142130, {\dots}$}           
   &[1,2;3]     & ${}^3(1,1)^3$  \\
2. &$(3, 1, 3)$ &$\alpha =14, \beta =-8$     &\seqnum{A011922} 
   &{\scriptsize $1, 3, 33, 451, 6273, 87363, 1216801, 16947843, {\ldots}$}   
   &[1,3;4]     &${}^4(1,1)^4$ \\
3. &$(4, 1, 4)$ &$\alpha =23, \beta =-15$    &---  
   &{\scriptsize $1, 4, 76, 1729, 39676, 910804, 20908801, {\ldots}$} 
   &[1,4;5]     &${}^5(1,1)^5$ \\
4. &(5, 1, 5) &$\alpha =34, \beta =-24$    &---  
   &{\scriptsize $1, 5, 145, 4901, 166465, 5654885, 192099601, {\ldots}$}   
   &[1,5;6]       &${}^6(1,1)^6$ \\
5. &$(3,-1,3)$ &$\alpha =2, \beta =8$    &\seqnum{A000466} 
   &{\scriptsize $-1, 3, 15, 35, 63, 99, 143, 195, 255, 323, 399, {\ldots}$}   
   &$[-1,3;2]$ &${}^2(1,3)^2$  \\
6. &$(4,-1,4)$ &$\alpha =7, \beta =15$   &\seqnum{A081078} 
   &{\scriptsize $-1, 4, 44, 319, 2204, 15124, 103679, 710644, {\ldots}$} 
   &$[-1,4;3]$      &${}^3(1,4)^3$ \\
7. &$(5, -1, 5)$ &$\alpha =14, \beta =24$  &--- 
   &{\scriptsize $-1, 5, 95, 1349, 18815, 262085, 3650399$}  
   &$[-1,5;4]$     &${}^4(1,5)^4$ \\  
8. &$(6, -1, 6)$ &$\alpha =23, \beta =35$  &---  
   &{\scriptsize $-1, 6, 174, 4031, 92574, 2125206, 48787199, {\ldots}$}   
   &$[-1,6;5]$     &${}^5(1,6)^5$ \\
\quad &&&&& \\[-9pt]
\multicolumn{3}{l}{Bicentral sequences} \\[3pt]
9. &$(2, 1, 1, 2)$ &$\alpha =4,\beta = -1$  &\seqnum{A101265}   
   &{\scriptsize $1, 2, 6, 21, 77, 286, 1066, 3977, 14841, {\ldots}$}   
   &[1,1;3]           &${}^2(1,1)^3$\\
10. &$(3, 1, 1, 3)$ &$\alpha =6, \beta = -2$  &\seqnum{A011900}  
    &{\scriptsize $1, 3, 15, 85, 493, 2871, 16731, 97513, 568345, {\ldots}$}  
    &[1,1;4]     &${}^2(1,1)^4$\\
11. &$(4, 1, 1, 4)$ &$\alpha =8, \beta = -3$  &---  
    &{\scriptsize $1, 4, 28, 217, 1705, 13420, 105652, 831793, {\ldots}$}   
    &[1,1;5]       &${}^2(1,1)^5$\\
12. &$(5,1, 1, 5)$ &$\alpha =10, \beta = -4$  &\seqnum{A054318}  
    &{\scriptsize $1, 5, 45, 441, 4361, 43165, 427285, 4229681, {\ldots}$} 
    &[1,1;6]        &${}^2(1,1)^6$\\
13. &$(3,2, 2, 3)$ &$\alpha =3, \beta = -1$  &\seqnum{A032908}  
    &{\scriptsize $2, 3, 6, 14, 35, 90, 234, 611, 1598, 4182, 10947, {\ldots}$} 
    &[2,3;1]   &${}^5(1,2)^1$\\
14. &$(5, 2, 2, 5)$ &$\alpha $=5, $\beta $= --3  &---  
    &{\scriptsize $2, 5, 20, 92, 437, 2090, 10010, 47957, 229772, {\ldots}$} 
    &[2,5;1]      &${}^7(1,2)^1$\\
15. &$(0, 0, 1,3)$ &$\alpha =2, \beta = 1$  &\seqnum{A000217}  
    &{\scriptsize $0, 1, 3, 6, 10, 15, 21, 28, 36, 45, 55, 66, {\ldots}$} 
    &[0,1;4]         &${}^1(1,1)^4$\\
16. &$(0, 0, 1,4)$ &$\alpha =3, \beta = 1$  &\seqnum{A027941}  
    &{\scriptsize $0, 1, 4, 12, 33, 88, 232, 609, 1596, 4180, {\ldots}$}
    &[0,1;5]          &${}^1(1,1)^5$\\
17. &$(0, 0, 1,5)$ &$\alpha =4, \beta = 1$  &\seqnum{A061278}  
    &{\scriptsize $0, 1, 5, 20, 76, 285, 1065, 3976, 14840, 55385, {\ldots}$} 
    &[0,1;6]    &${}^1(1,1)^6$\\[3pt] 
\hline
\end{tabular}
\caption{Examples of symmetric lens sequences}
\label{tbl:4.1}
\end{table}

\begin{table}  	
\begin{center}
\footnotesize
\begin{tabular}{rlllll}
  \hline
  \quad &&&& \\[-9pt]
  &seed &constants  &sequence & label and symbol \\[2pt] 
  \hline
\quad &&&& \\[-9pt]
1a. &$(3,1,2)$ &$\alpha =10,\, \beta = -5$  
    &24, 3, 1, 2, 14, 133, 1311, 12972, 128404, 1271063,...     &[1,2;4]   &${}^3(1,2)^4$ \\
b.  &$(2,1,3)$ &      
    &14, 2, 1, 3, 24, 232, 2291, 22673, 224434, 2221662,...     &[1,3;3]   &${}^4(1,3)^3$\\[3pt]

2. &$(5,3,6)$ &$\alpha =6,\, \beta = -7$  
    &\ldots, 108, 20,  5, 3, 6, 26, 143, 825, 4800, 27968, ...  &[5,3;1]  &${}^8(1,3)^1$ \\[3pt]
3.  &$(3, -1, 4)$ &$\alpha =4,\, \beta = 11$  
    &\ldots, 403, 104, 24, 3, $-$1, 4, 28, 119, 459, 1728,{\ldots}       &[$-$1,4;2] &${}^3(1,4)^2$\\ [3pt] 
%
4.  &$(15, 12, 20)$ &$\alpha =4,\, \beta = -13$  
    &\ldots, 400, 112, 35, 15, 12, 20, 55, 187, 680, 2520, ...           &--- &${}^2(4,5)^3$ \\[3pt]
5.  &$(21, 6, 10)$ &$\alpha =10,\, \beta = -29$  
    &\ldots, 16796, 1700, 175, 21, 6, 10, 65, 611, 6016, \ldots          &--- &${}^4(2,5)^3$ \\[3pt]

6. &$(1,2,4)$ &$\alpha = 5/2,\, \beta = 0$  
   &\ldots, 1, 2, 4, 8, 16, 32, 64, 128, 256, 512, 1024, {\ldots}   &[1,2;3/2]  &${}^3(1,2)^{2/3}$\\[3pt] \hline
\end{tabular}
\caption{Examples of non-symmetric sequences. Example 6 (\seqnum{A000079}) is integer in one direction only.}
\label{tbl:4.2}
\end{center}
\end{table}

\begin{table}  	
\begin{center}
\footnotesize
\begin{tabular}{llllll} \hline
\quad &&&& \\[-9pt]
&seed  &constants   &sequence   &label and symbol \\[2pt]  \hline
\quad &&&& \\[-9pt]
1. &(2, $-$1,2) &$\alpha = -1,\ \beta = 3$  
   &2, 2, $-$1, 2, 2, $-$1, 2, 2, $-$1, 2, 2, $-$1, 2, 2, {\ldots}    &[$-$1,2;1]   &${}^1(1,2)^1$ \\
2. &(3, $-$1, 2) &$\alpha =0,\ \beta  = 5$   
   &2, 6, 3, $-$1, 2, 6, 3, $-$1, 2, 6, $-$1, 2, 6,$-$1, {\ldots}     &[$-$1,2;2]   &${}^1(1,2)^2$ \\
3. &(14, -6, 15) &$\alpha =0,\ \beta = 29$   
   &14, -6, 15, 35, 14, -6, 15, 10, 35, 14, -6 {\ldots}               &---          &${}^2(5,7)^1$ \\
4. &(1, 1, 0) &$\alpha =0,\ \beta = 1$   
   &1, 1, 0, 0, 1, 1, 0, 0, 1, 1,0, 0, 1, 1, 0, 0, 1, 1,  {\ldots}    &[0,1;2]      &${}^1(1,1)^2$ \\
5. &(4, -1, 2) &$\alpha =1,\ \beta = 7$   
   &2, 10, 15, 12, 4, -1, 2, 10, 15, 15, 12, 4, -1 {\ldots}           &[$-$1,2;3]   &${}^1(1,2)^3$ \\
6. &(10, $-$6, 33) &$\alpha = 1,\ \beta = 49$  
   &33, 88, 104, 65, 10, $-$6, 33, 88, 104, 65, 10, $-$6, ...         &---          &${}^3(5,2)^1$  \\[3pt]
\hline
\end{tabular}
\caption{Examples of periodic sequences. Example 4 is known as \seqnum{A021913}.}
\label{tbl:4.3}
\end{center}
\end{table}

\clearpage

\begin{figure}[h] 
\[
\includegraphics[scale=.750]{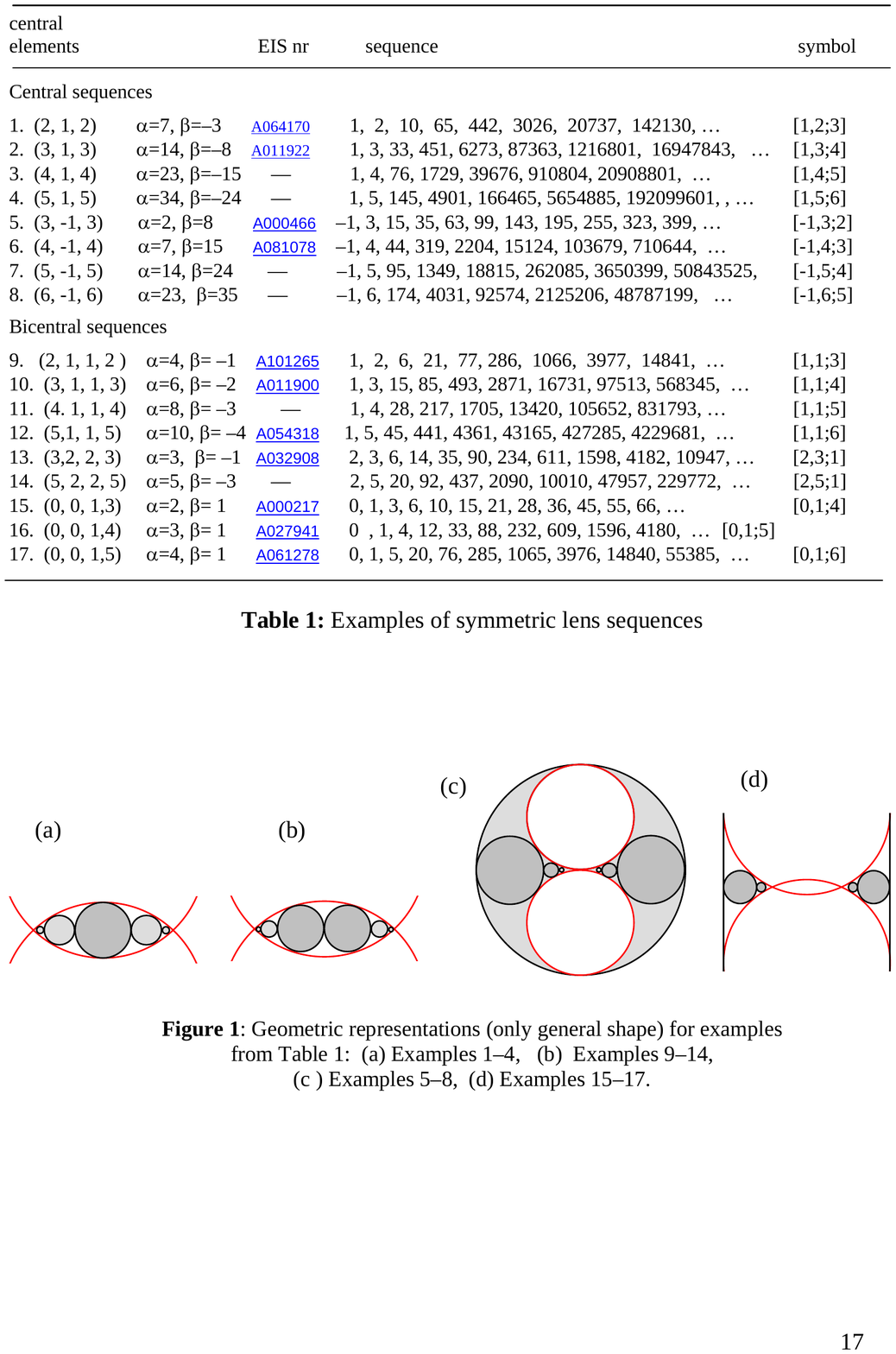} 
\]
\caption{Geometric representations (only general shape) for 
examples from Table \ref{tbl:4.1}: (a)~Examples 1--4, (b)~Examples 9--14, (c) Examples 5--8, (d) Examples 15--17.}
\label{fig:4.4}
\end{figure}
\begin{figure}[h] 
\[
\includegraphics[scale=.870]{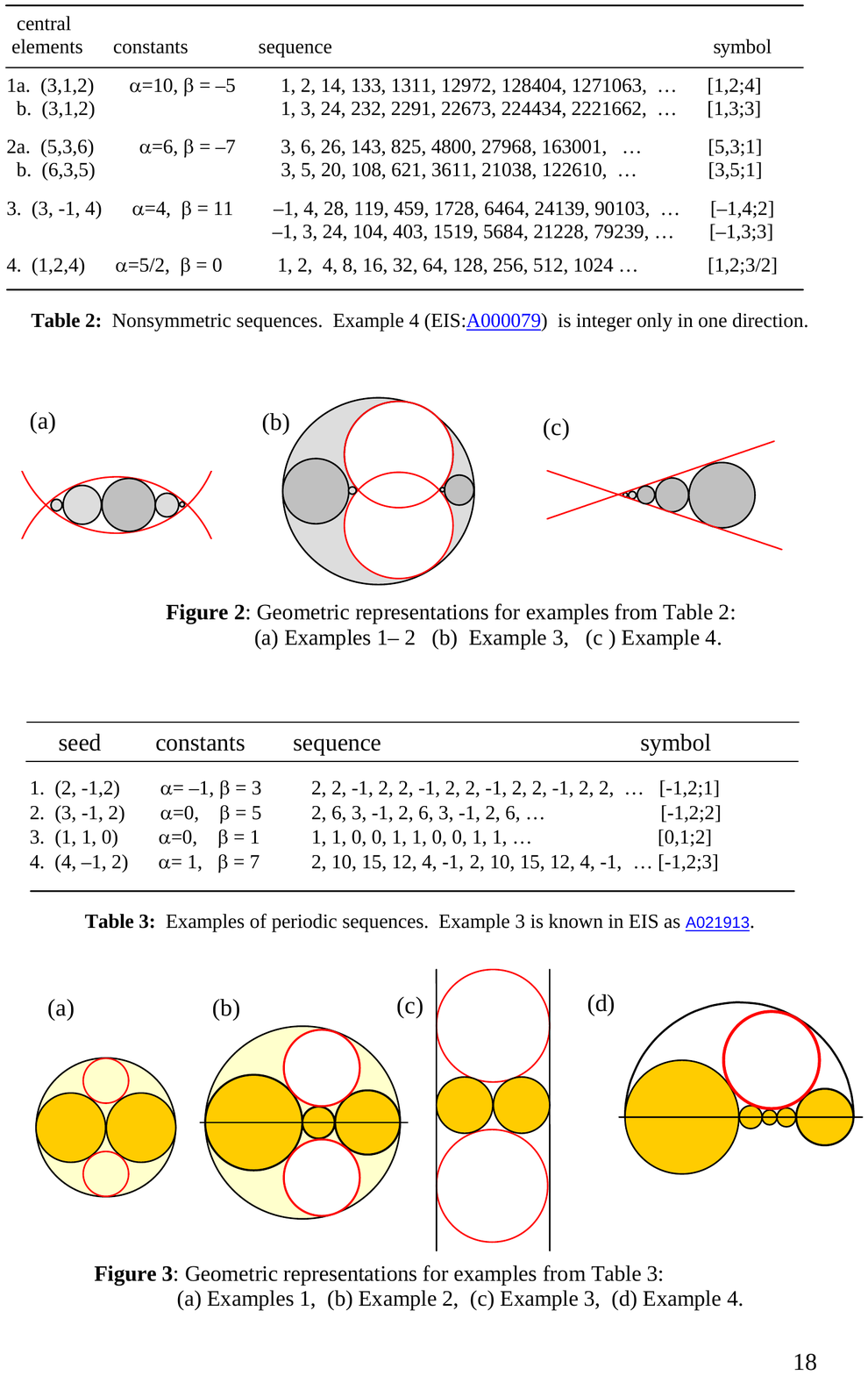} 
\]
\caption{Geometric representations for examples from Table \ref{tbl:4.2}: (a) Examples 1--2 \quad (b)~Example 3, (c) Example 4.}
\label{fig:4.5}
\end{figure}
\begin{figure}[h] 
\[
\includegraphics[scale=.66]{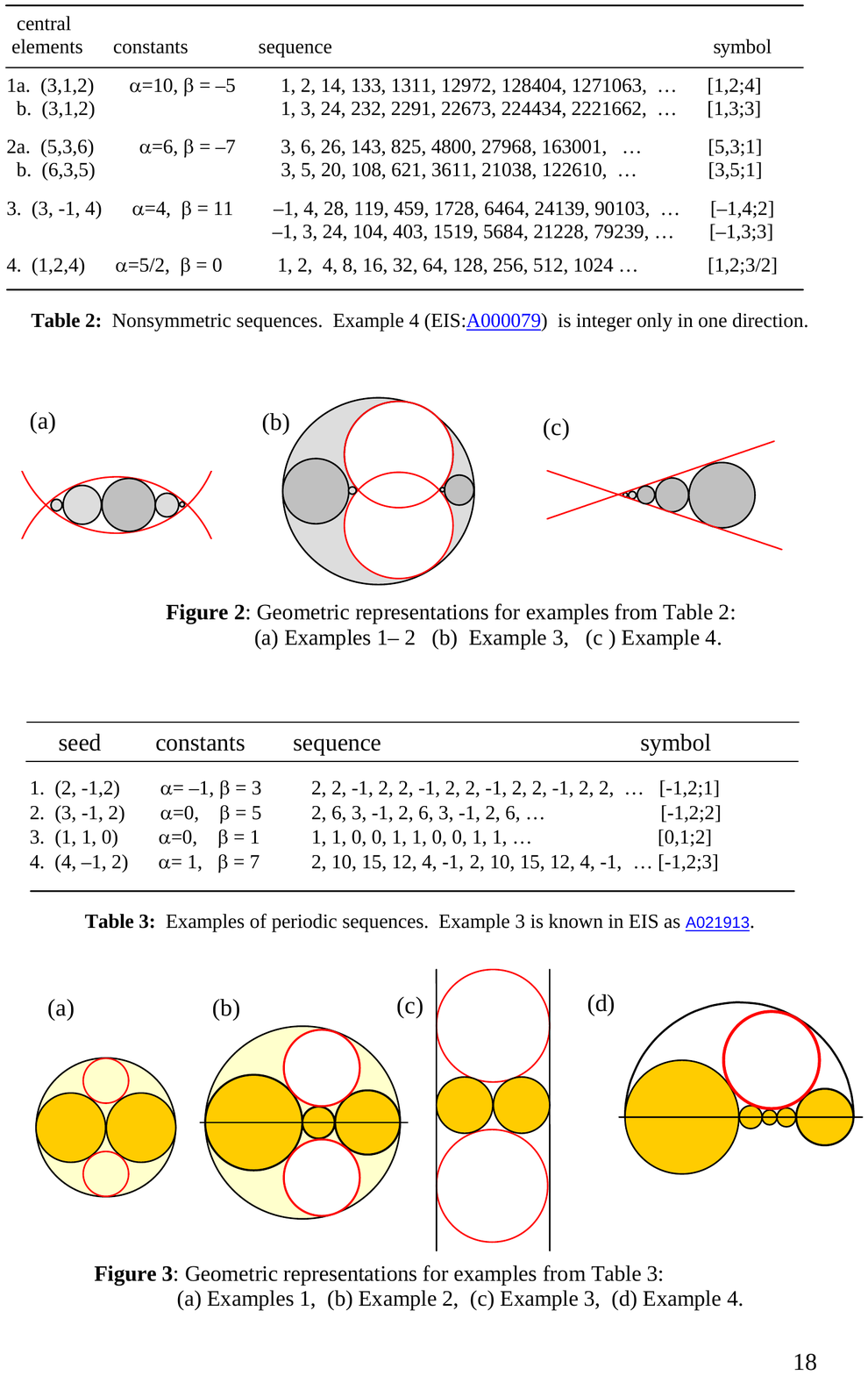} 
\]
\caption{Geometric representations for examples from Table \ref{tbl:4.3}: 
          (a) Example 1, (b) Example 2, (c) Example 3, (d) Example 4.}
\label{fig:4.6}
\end{figure}
\begin{table}[h!]  	
\begin{center}
\footnotesize
\begin{tabular}{llclcl} \hline
\quad &&&& \\[-9pt]
&constants  &OEIS  &sequence   &label and symbol \\[2pt]  \hline
\quad &&&& \\[-9pt]
1. &$\alpha = -3,\ \beta = 1$ &\seqnum{A001654} 
   &{\scriptsize $0, 1, -2, 6, -15, 40, -104, 273, -714, 1870, -4895, 12816, {\ldots}$} 
   &[0,1;--1]   &${}^{\phantom{-}1}(1,1)^{-1}$ \\
2. &$\alpha =-3,\ \beta = 5$ &\seqnum{A075269} 
   &{\scriptsize$2,-3,12,-28,77,-198,522,-1363,3572,-9348,{\ldots}$}                       
   &[2,2;1]   &${}^{-1}(1,2)^1$ \\
3. &$\alpha =-4,\ \beta = 3$ & --- 
   &{\scriptsize $1,-2,10,-35,133,-494,1846,-6887,25705,-95930, {\ldots}$}    
   &[1,1;--1]  &${}^{\phantom{-}2}(1,1)^{-1}$ \\
4. &$\alpha =-4,\ \beta = -1$   &\seqnum{A109437}
   &{\scriptsize $0,-1,3,-12,44,-165,615,-2296,8568,-31977,{\ldots}$} 
   &[0,--1;2]      &${}^{\phantom{-}2}(1,3)^{-1}$ \\
5. &$\alpha =-5,\ \beta = 1$   &\seqnum{A099025}
   &{\scriptsize $0,1,-4,20,-95,456,-2184,10465,-50140,240236,{\ldots}$}
   &[0,1;--3]      &${}^{-3}(1,1)^1$ \\
6. &$\alpha =-6,\ \beta = -4$   &\seqnum{A084159}
   &{\scriptsize $1,-3,21,-119,697,-4059,23661,-137903,803761,{\ldots}$ }           
   &[1,1;--2]      &${}^{-2}(1,1)^{-2}$ \\
6. &$\alpha = -6,\ \beta = 1$ &\seqnum{A084158}
   &{\scriptsize $0,1,-5,30,-174,1015,-5915,34476,-200940,{\ldots}$}  
   &[0,1;--4]      &${}^{-4}(1,1)^1$  \\[2pt]
\hline
\end{tabular}
\caption{{\small Examples of formal lens sequences}}
\label{tbl:4.4}
\end{center}
\end{table}

\clearpage


But the above types of sequences do not exhaust the possibilities, 
as this example shows:

\begin{center}
{\ldots} 2331, 407, 77, 21, 15, 35, 161, 897, 5187 {\ldots} ,
\end{center}

\noindent
which is a lens sequence with recurrence formula $b_{n} = 6 b_{n-1} - b_{n-2} - 34$. 
We will arrive at a general rule that produces all integer lens sequences 
and an improved version of the integrality criterion in the last section.

\def\zzzz{
\section{Binet-type formulas for lens sequences}  \label{S5}

The lens sequences may be generated by a Binet-type formula. 

\paragraph{Examples.}

\paragraph{1.} 
Vesica Piscis [\seqnum{A011922}]: \textbf{6 2 6 66 902 12546 174726 
{\ldots}}, $b_{n} = 14b_{n-1} - b_{n-2} - 16$
\[
  b_{n}  = \frac{4+\lambda^n+\bar{\lambda}^n}{3}
	=\frac{4+\kappa^{2n}+\bar {\kappa}^{2n}}{3} \hbox{ where }
	\lambda = 7+4\sqrt 3 =(2+\sqrt 3)^2=\kappa^{2 }\, .
\]
(see explanation below for $\bar\lambda$).

\paragraph{2.} Golden Vesica [\seqnum{A064170}]: \textbf{ {\ldots} 2 1 2 
10 35 442 }\textbf{\textit{{\ldots}}}, $b_{n} = 7b_{n-1} - b_{n-2} - 3$
\[
  b_{n}  = \frac{3+\lambda ^n+\bar {\lambda }^n}{5} \hbox{ where }
	\lambda = \frac{7+3\sqrt 5 }{2}=\varphi^{4}\, ,
\]
the fourth power of the golden ratio, is the characteristic constant of this 
sequence. The formula is ``centered'': for $n=0$ we have $b_{n} = 1$.

\paragraph{3.} Also non-symmetric lens sequences can be expressed 
by similar Binet-type formulas. For instance, the sequence extended 
from $(6, 2, 3)$:
\begin{center}
\textbf{{\ldots}2346 299 39 }\textbf{6 2 3}\textbf{ 15 110 858 6747{\ldots}}
\end{center}
with the recurrence formula $b_{n} = 8b_{n-1} - b_{n-2} - 7$ may be obtained from 
\[
  b_{n} = \frac{(25-3\sqrt{15}) \lambda^n+(25+3\sqrt{15})
	\bar {\lambda}^n}{60} + \frac{7}{6} \hbox{ where }
	\lambda  = 4 + \surd 15.
\]

Note that $\lambda $ as well as the other terms in the above formulas are 
elements of $\mathbb{Q}(\sqrt {\alpha ^2-4})$, the field of rational numbers 
extended by $p = \sqrt {\alpha ^2-4}$. The conjugation denoted by the bar is 
understood as a ``reflection'' of $p$.
For instance, in $\mathbb{Q}(\surd 5)$ we have 
$\overline {2+3\sqrt 5 } =2-3\sqrt 5$. Note that in each of the above 
examples $\vert \lambda \vert ^{2}=\lambda \bar {\lambda } = 1$, 
and thus $\bar {\lambda }=\lambda^{-1}$

\begin{theorem}  	\label{th:5.1} 
The lens sequence generated from a seed $(a,b,c)$ has the following Binet-like formula 
\begin{equation}    \label{eq:5.1}
  b_{n}=w\lambda^n+\bar {w}\bar {\lambda }^n+\gamma
\end{equation}
where 
\[
  \lambda = \frac{\alpha +\sqrt {\alpha ^2-4} }{2}
	\qquad
	\bar {\lambda } = \frac{\alpha -\sqrt {\alpha ^2-4} }{2}.
\]
and where
\[
  w = \frac{a-2b+c}{2(\alpha -2)}+\frac{c-a}{2(\alpha ^2-4)}
	\sqrt {\alpha ^2-4} , 
	\qquad 
	\gamma = \frac{-\beta }{\alpha -2}
\]
and $\bar {w}$ and $\bar {\lambda }$ denote conjugates of $w$ and $\lambda $ in 
$\mathbb{Q(}\sqrt {\alpha ^2-4} )$, respectively. 
In particular, $(a,b,c)=(b_{-1},b_0,b_{n+1})$.
\end{theorem}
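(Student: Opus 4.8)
The plan is to reduce the inhomogeneous recurrence \eqref{eq:1.1} to a homogeneous two-term recurrence by subtracting off a constant (fixed-point) solution, and then to solve that homogeneous recurrence by the classical characteristic-root method, exactly as sketched for Proposition \ref{prp:4.1}(iv). First I would seek a constant solution $b_n \equiv \gamma$ of \eqref{eq:1.1}; substituting gives $\gamma = \alpha\gamma - \gamma + \beta$, i.e. $(2-\alpha)\gamma = \beta$, so $\gamma = -\beta/(\alpha-2)$. This is exactly the advertised constant and is well defined precisely because $\alpha \ne 2$. Setting $a_n = b_n - \gamma$ then kills the inhomogeneous term and leaves $a_n = \alpha a_{n-1} - a_{n-2}$.

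For the homogeneous part I would form the characteristic equation $t^2 - \alpha t + 1 = 0$, with roots $\lambda,\bar\lambda = (\alpha \pm \sqrt{\alpha^2-4})/2$; here $\lambda + \bar\lambda = \alpha$, $\lambda\bar\lambda = 1$ (so $\bar\lambda = \lambda^{-1}$), and $\lambda - \bar\lambda = \sqrt{\alpha^2-4}$. When $\alpha^2 \ne 4$ the roots are distinct, so $a_n = w\lambda^n + \bar w\bar\lambda^n$ with $w,\bar w \in \mathbb{Q}(\sqrt{\alpha^2-4})$; reality of $(b_n)$ forces $\bar w$ to be the conjugate of $w$. To pin down $w$ I would exploit the centered seed $(a,b,c)=(b_{-1},b_0,b_1)$, which makes the fit clean: $w+\bar w = a_0 = b - \gamma$ gives the rational part of $w$, while, using $\lambda^{-1}=\bar\lambda$, the symmetric difference $a_1 - a_{-1} = (w-\bar w)(\lambda-\bar\lambda) = c-a$ yields $w-\bar w = (c-a)/\sqrt{\alpha^2-4}$, hence the coefficient $\tfrac{c-a}{2(\alpha^2-4)}$ of $\sqrt{\alpha^2-4}$ in $w$. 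The only non-automatic simplification is the rational part: $\tfrac12(b-\gamma) = \tfrac{(\alpha-2)b+\beta}{2(\alpha-2)}$ must be rewritten as $\tfrac{a-2b+c}{2(\alpha-2)}$, which follows immediately from the seed identity $\alpha b + \beta = a+c$ of Theorem \ref{th:2.4}, since $(\alpha-2)b + \beta = (\alpha b+\beta) - 2b = a+c-2b$.

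The arithmetic is routine, so the genuine obstacle is the degenerate locus $\alpha^2 = 4$. For $\alpha = 2$ the fixed-point step collapses (no constant solution exists, and $\gamma$, $\lambda$, $w$ all blow up), which is exactly why the hypothesis $\alpha \ne 2$ is imposed; geometrically this is the tangent-lens, polynomial-growth regime where a linear-plus-quadratic ansatz is required instead. For $\alpha = -2$ the characteristic root is repeated ($\lambda = \bar\lambda = -1$), so the two-exponential form degenerates and one would replace it by $a_n = (w + \bar w\,n)(-1)^n$; I would either exclude this value explicitly or dispatch it as a short separate computation. Away from $\alpha^2 = 4$ the argument is complete, and evaluating \eqref{eq:5.1} at $n=-1,0,1$ returns $(a,b,c)$ (the three seed values are consistent with the recurrence, so fitting two of them recovers the third), confirming the final assertion $(a,b,c)=(b_{-1},b_0,b_1)$.
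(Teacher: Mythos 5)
Your proposal is correct and takes essentially the same route as the paper: the paper's proof likewise shifts by the fixed point, setting $a_n = b_n + \beta/(\alpha-2)$ to obtain the homogeneous recurrence $a_n = \alpha\, a_{n-1} - a_{n-2}$, and then resolves it ``by the standard procedure'' (the characteristic roots $\lambda,\bar\lambda$ of $t^2-\alpha t+1=0$, with $w$ pinned down from the centered seed $(b_{-1},b_0,b_1)=(a,b,c)$, exactly as in your fit via $w+\bar w$ and $(w-\bar w)(\lambda-\bar\lambda)=c-a$). Your explicit flagging of the repeated-root case $\alpha=-2$, where $\sqrt{\alpha^2-4}=0$ makes the stated $w$ undefined even though the hypothesis $\alpha\neq 2$ is satisfied, is a genuine refinement the paper silently passes over.
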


\begin{proof} 
Define a new sequence whose entries are shifted by a constant, namely
$a_{n} = b_{n}+\beta/(\alpha -2)$.
The sequence $(a_{n})$ satisfies a homogeneous three-term recurrence formula:
$a_{n}=\alpha \, a_{n-1} - a_{n-2}$ which has a matrix formulation:
\[
  \left[ {{\begin{array}{c}
	a_{n+1} \\
	a_n     \\
	\end{array} }} \right] = 
  \left[ {{\begin{array}{cr}
	\alpha & {-1} \\
	1  & 0  \\
	\end{array} }} \right]
	\quad
	\left[ \begin{array}{c}
	a_n     \\
	a_{n-1} \\
	\end{array}  \right]
\]
that resolves to \ref{eq:5.1} by the standard procedure. 
\end{proof}

The corresponding characteristic polynomial 
$\lambda ^{2} - \alpha \lambda  + 1 = 0$ has two solutions
\[
  \lambda _{1} = \lambda = \frac{\alpha +\sqrt {\alpha ^2-4} }{2}, \quad   
  \lambda _{2} = \bar {\lambda } = \frac{\alpha -\sqrt {\alpha ^2-4} }{2}\,.
\]
Note $\lambda \bar {\lambda }=1$, that is $\bar {\lambda }=\lambda ^{-1}$. 
}

                                                                     %
\def\killx{  
\begin{proof} 
Given a sequence obeying an inhomogeneous three-term recurrence formula:
\[
  b_{n}=\alpha \, b_{n-1} - b_{n-2}+\beta 
\]
we may define a new sequence $(a_{n})$ whose entries are shifted by a constant, namely
\[
  a_{n} = b_{n}+\frac{\beta }{\alpha -2}\,.
\]
The sequence $(a_{n})$ satisfies a homogeneous three-term recurrence formula:
\[
  a_{n}=\alpha \, a_{n-1} - a_{n-2}\, ,
\]
which has a matrix formulation:
\[
  \left[ {{\begin{array}{c}
	{a_{n+1} }  \\
	{a_n }  \\
	\end{array} }} \right] = 
  \left[ {{\begin{array}{cr}
	\alpha & {-1} \\
	1  & 0  \\
	\end{array} }} \right]
	\quad
	\left[ {{\begin{array}{c}
	{a_n }  \\
	{a_{n-1} }  \\
	\end{array} }} \right]
\]
$(\det M = 1, \hbox{tr }M = \alpha)$. The corresponding characteristic polynomial 
$\lambda ^{2} - \alpha \lambda  + 1 = 0$ has two solutions
\[
  \lambda _{1} = \lambda = \frac{\alpha +\sqrt {\alpha ^2-4} }{2}, \quad   
  \lambda _{2} = \bar {\lambda } = \frac{\alpha -\sqrt {\alpha ^2-4} }{2}\,.
\]
Note $\lambda \bar {\lambda }=1$, that is $\bar {\lambda }=\lambda ^{-1}$. 
The corresponding eigenvectors, [$\lambda 1]^{T}$ and [$\bar {\lambda } 
1]^{T}$, provide the diagonalization of $M$:
\[
  M=\left[ {{\begin{array}{*{20}c}
	\lambda \hfill & {\bar {\lambda }} \hfill \\
	1 \hfill & 1 \hfill \\
	\end{array} }} \right]\left[ {{\begin{array}{*{20}c}
	\lambda \hfill & 0 \hfill \\
	0 \hfill & {\bar {\lambda }} \hfill \\
	\end{array} }} \right]\left[ {{\begin{array}{*{20}c}
	\lambda \hfill & {\bar {\lambda }} \hfill \\
	1 \hfill & 1 \hfill \\
	\end{array} }} \right]^{-1}
\]
The $n$-th power of $M$ is
\[
  M^{n}=\left[ {{\begin{array}{*{20}c}
	\lambda \hfill & {\bar {\lambda }} \hfill \\
	1 \hfill & 1 \hfill \\
	\end{array} }} \right]\left[ {{\begin{array}{*{20}c}
	{\lambda ^n} \hfill & 0 \hfill \\
	0 \hfill & {\bar {\lambda }^n} \hfill \\
	\end{array} }} \right]\left[ {{\begin{array}{*{20}c}
	\lambda \hfill & {\bar {\lambda }} \hfill \\
	1 \hfill & 1 \hfill \\
	\end{array} }} \right]^{-1}=\frac{1}{\lambda -\bar {\lambda }}\left[ 
	{{\begin{array}{*{20}c}
	\lambda \hfill & {\bar {\lambda }} \hfill \\
	1 \hfill & 1 \hfill \\
	\end{array} }} \right]\left[ {{\begin{array}{*{20}c}
	{\lambda ^n} \hfill & 0 \hfill \\
	0 \hfill & {\bar {\lambda }^n} \hfill \\
	\end{array} }} \right]\left[ {{\begin{array}{*{20}c}
	\ \ 1 \hfill & {-\bar {\lambda }} \hfill \\
	{-1} \hfill & \ \ \lambda \hfill \\
	\end{array} }} \right] .
\]
Acting with $M^n$ on the initial vector gives
\[
  \left[ {{\begin{array}{*{20}c}
	{a_{n+1} } \hfill \\
	{a_n } \hfill \\
	\end{array} }} \right] = M^{n}  
  \left[ {{\begin{array}{*{20}c}
	{a_1 } \hfill \\
	{a_0 } \hfill \\
	\end{array} }} \right] ={\ldots} = 
  \frac{1}{\lambda -\bar {\lambda}}\left[ {{\begin{array}{c}
	\ast  \\
	{(\lambda ^n-\bar {\lambda }^n)a_1 -(\lambda ^{n-1}-\bar {\lambda}^{n-1})a_0 }  \\
	\end{array} }} \right],
\]
where the entry ``*'' does not concern us. Thus the entries of the 
inhomogeneous three-term recurrence sequence $b_{n}=\alpha _{ 
}b_{n-1} - b_{n-2}+\beta $ are
\[
  b_{n} = \frac{(b_1 +z)-\bar {\lambda }(b_0 +z)}
	{\lambda -\bar {\lambda}}\lambda^n
	+\frac{\lambda (b_0 +z)-(b_1 +z)}{\lambda -\bar {\lambda }}
	\bar {\lambda }^n-z\,.
\]
Recall $\lambda -\bar {\lambda }=\sqrt {\alpha ^2-4} $, and $z =\beta 
/(\alpha -2)$. Thus the entries of the inhomogeneous three-term 
recurrence sequence, after rationalization of the denominators, are
\[
  b_{n} = w\lambda ^n+\bar {w}\bar {\lambda }^n-\tfrac{\beta }{\alpha -2}
\]
with 
\[
  w = \frac{b_1 -2b_0 +b_{-1} }{2(\alpha -2)}
	+\frac{b_1 -b_{-1}}{2(\alpha ^2-4)}\sqrt {\alpha ^2-4} \,.
\]
Renaming the terms $b_{-1} = a$, $b_{0} = b$, $b_{1} = c$, the proof is concluded. 
\end{proof}
}
                                                                 %

\subsection*{Invariants}
The formulas for the coefficients $\alpha$ and $\beta$ in the recurrence 
formula \eqref{eq:1.1} may be represented diagrammatically as shown in 
Figure \ref{fig:4.2}, which exhibits the coefficients' algebraic ``structure'' 
(the mnemonic role aside). The dots on the line represent the consecutive terms 
of the sequence. The arcs represent products of the joined terms, and the 
position above/below the line position indicates their appearance 
in the numerator/denominator of the formula. Dotted lines are to be taken 
with the negative sign. 

\begin{figure}[h] 
\[
\includegraphics[scale=1]{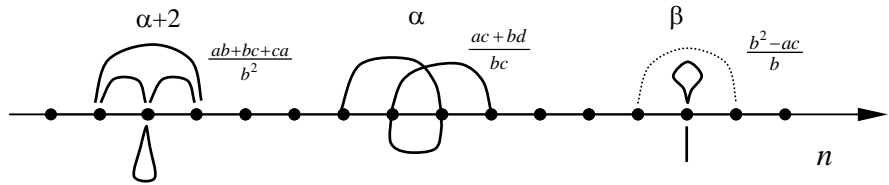} 
\]
\caption{Diagrammatic representation of the lens sequence invariants}
\label{fig:4.2}
\end{figure}

Since the formulas do not depend on the particular choice of the three 
seed circles, one may position the diagram at any place in the line/sequence. 
In this sense, \textit{$\alpha $} and \textit{$\beta $} represent \textbf{invariants} 
of the sequence with respect to translation along the sequence. 
But this also means that each of them gives rise to a new non-linear recurrence formula!

\subsection*{Additional remarks on the Binet-like formula}

Denoting ``jumps'' around the central element $b_{0}$ by 
$\Delta_{+} = b_{1} - b_{0}$ and by $\Delta_{ -} = b_{0} - b_{-1}$, 
we get a more suggestive form of term $w$ in the Binet-like formula
for lens sequences, namely:
\[
  w = \frac{\Delta _+ -\Delta _- }{2(\alpha -2)}+\frac{\Delta _+ 
	+\Delta_-}{\alpha ^2-4}\sqrt {\alpha ^2-4} \,.
\]
Expressing $\alpha $ in terms of $\lambda $ we also obtain
\[
  w = \frac{\Delta _+ -\Delta _- }{2(\lambda +\lambda ^{-1}-2)} 
	+ \frac{\Delta _+ +\Delta _- }{\sqrt {\lambda ^2+\lambda ^{-2}} }\,.
\]
Other representations of the formula for $w$ include: 
\[
\begin{aligned}
  w &= \left( {\frac{b_0 }{2}+\frac{\beta }{\alpha -2}} \right)
	+\frac{b_1 -b_{-1} }{2\alpha ^2-8}\sqrt {\alpha ^2-4} \, ,	\\
  w &= \frac{(\alpha +2)(b_1 -2b_0 +b_{-1} )\quad +\quad (b_1 -b_{-1} )
	\sqrt {\alpha ^2-4} }{2(\alpha ^2-4)}\,.
\end{aligned}
\]

Note that re-indexing the sequence so that another term of $(b_{n})$ becomes 
the central ``$b_{0}$'' will change the value of $w$ in the formula \eqref{eq:5.1}. 
\\

In order to better understand the situation, note the following simple 
general rule for this type of recurrences: 
\newline

\begin{theorem}		\label{th:5.3} 
Let a sequence $(x_{n})$ be given by the following Binet-like formula
\begin{equation}    \label{eq:5.4}
  x_{n} = a \omega^{n} + b \omega^{-n} + c
\end{equation}
for some constants $a$, $b$, $c$ and $\omega$. Then the sequence satisfies a 
non-homogeneous 3-term linear recurrence formula
\[
  x_{n+2} = (\omega + 1/\omega) x_{n+1} - x_{n} + c(2 - \omega - 1/\omega )\,.
\]
Moreover, if $\vert \omega \vert >1$ 
then $\mathop {\lim}\limits_{n\to \infty } \;\frac{x_{n+1} }{x_n } = \omega $, 
and for large $n$ we have $x_n\simeq a\omega ^n+c$.
\end{theorem}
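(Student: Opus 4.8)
The plan is to establish the recurrence by a direct verification and then to read off the two asymptotic claims from the dominance of the $\omega^n$ term. First I would split the Binet expression as $x_n = y_n + c$, where $y_n = a\omega^n + b\omega^{-n}$ is the purely exponential part. The crucial algebraic fact is that $\omega$ and $\omega^{-1}$ are exactly the two roots of the quadratic $t^2 - (\omega + \omega^{-1})\,t + 1 = 0$: their product is $1$ and their sum is $\omega + \omega^{-1}$. Consequently each of the sequences $\omega^n$ and $\omega^{-n}$, and hence every linear combination $y_n$, is annihilated by the homogeneous difference operator $L[y]_n := y_{n+2} - (\omega + \omega^{-1})\,y_{n+1} + y_n$. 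This is the same device already used to linearize the inhomogeneous recurrence in Proposition \ref{prp:4.1}(iv), now run in reverse.

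Applying $L$ to $x_n = y_n + c$ and using linearity gives $L[x]_n = L[y]_n + L[c]_n = 0 + L[c]_n$, and evaluating $L$ on the constant sequence yields $L[c]_n = c - (\omega + \omega^{-1})\,c + c = c(2 - \omega - \omega^{-1})$. Rearranging the resulting identity $L[x]_n = c(2 - \omega - \omega^{-1})$ into the form $x_{n+2} = (\omega + \omega^{-1})\,x_{n+1} - x_n + c(2 - \omega - \omega^{-1})$ is precisely the claimed recurrence. Equivalently, one may substitute the Binet formula directly into the right-hand side and watch the $\omega^n$, $\omega^{-n}$, $c\omega$, and $c/\omega$ contributions cancel in pairs, leaving $a\omega^{n+2} + b\omega^{-(n+2)} + c = x_{n+2}$; I would present the operator version, since it makes transparent why the inhomogeneous constant is forced to take this particular shape.

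For the asymptotic statements I would invoke $|\omega| > 1$, which forces $\omega^{-n} \to 0$ as $n \to \infty$. Dividing numerator and denominator of the ratio by $\omega^n$ gives
\[
  \frac{x_{n+1}}{x_n} = \frac{a\omega + b\,\omega^{-(2n+1)} + c\,\omega^{-n}}{a + b\,\omega^{-2n} + c\,\omega^{-n}} \longrightarrow \frac{a\omega}{a} = \omega,
\]
while discarding the vanishing term $b\omega^{-n}$ in $x_n = a\omega^n + b\omega^{-n} + c$ yields $x_n \simeq a\omega^n + c$ for large $n$.

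Since the argument is a verification together with an elementary limit, there is no serious obstacle. The only point demanding care is the implicit nondegeneracy hypothesis $a \neq 0$: without it the denominator $a + b\omega^{-2n} + c\omega^{-n}$ ceases to tend to a nonzero limit, and the ratio need not converge to $\omega$. I would therefore state $a \neq 0$ explicitly for the limit assertion, noting that the condition $|\omega| > 1$ already excludes the degenerate values $\omega = \pm 1$, so that no further case distinction is needed.
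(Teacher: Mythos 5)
Your proposal is correct, and for the recurrence it is in substance the same argument as the paper's: the paper's proof simply writes out $x_{n+2}=a\omega^{2}\omega^{n}+b\omega^{-2}\omega^{-n}+c$, adds $x_{n}$, and uses $\omega^{n+2}+\omega^{n}=(\omega+1/\omega)\,\omega^{n+1}$ (and its $\omega^{-n}$ analogue) to reach the stated identity; your annihilator operator $L[y]_n = y_{n+2}-(\omega+\omega^{-1})y_{n+1}+y_n$ is that same computation packaged structurally, with the modest benefit that the inhomogeneous constant $c(2-\omega-1/\omega)$ appears transparently as $L$ evaluated on the constant sequence rather than as the residue of a cancellation. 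Where you genuinely go beyond the paper is the second half of the statement: the paper's proof (labelled ``Direct'') verifies only the recurrence and leaves the claims $\lim_{n\to\infty} x_{n+1}/x_n=\omega$ and $x_n\simeq a\omega^n+c$ entirely unargued, whereas you supply the division-by-$\omega^n$ computation that proves them. Your observation that the limit assertion tacitly requires $a\neq 0$ is a genuine, if small, correction to the theorem as stated: with $a=0$ and $b,c\neq 0$ one has $x_n\to c$ and the ratio tends to $1$ (or to $\omega^{-1}$ if also $c=0$, $b\neq 0$), not to $\omega$. Your closing remark about $\omega=\pm 1$ is harmless but unnecessary --- those values are already excluded from the limit claim by $|\omega|>1$, and the recurrence itself holds for every $\omega\neq 0$ without case distinction.
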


\begin{proof} Direct. 
First note that \eqref{eq:5.4} for the ($n+2$)-nd term gives
\[
  x_{n+2} = a\omega ^{2}\omega ^{n}+b\omega ^{-2}\omega ^{-n}+c\,.
\]
Adding these two gives, after some simple algebraic operations:
\[
  x_{n}+x_{n+2} = (\omega + 1/\omega) x_{n+1}+c(2 - \omega - 1/\omega )
\]
which is equivalent to \eqref{eq:5.4}. 
\end{proof}

Looking at the above, one hardly escapes the thought that lens sequences 
could be ``explained'' in terms of Chebyshev polynomials. 
This path did not however return any deeper insight.
Instead, consider the following.

\newpage
\section{Underground sequences}  \label{S8}

There are still some mysteries in the structure of lens sequences. 
One of the most remarkable properties is this: the entries of an integer 
lens sequence are products of consecutive pairs of a certain ``underlying'' integer sequence. 
Here is an example of a sequence of \textit{Vesica Piscis} 
(\seqnum{A011922}, see Example \ref{exm:1.1} in Section \ref{S1}):
\begin{figure}[h!] 
\[
\includegraphics[scale=1]{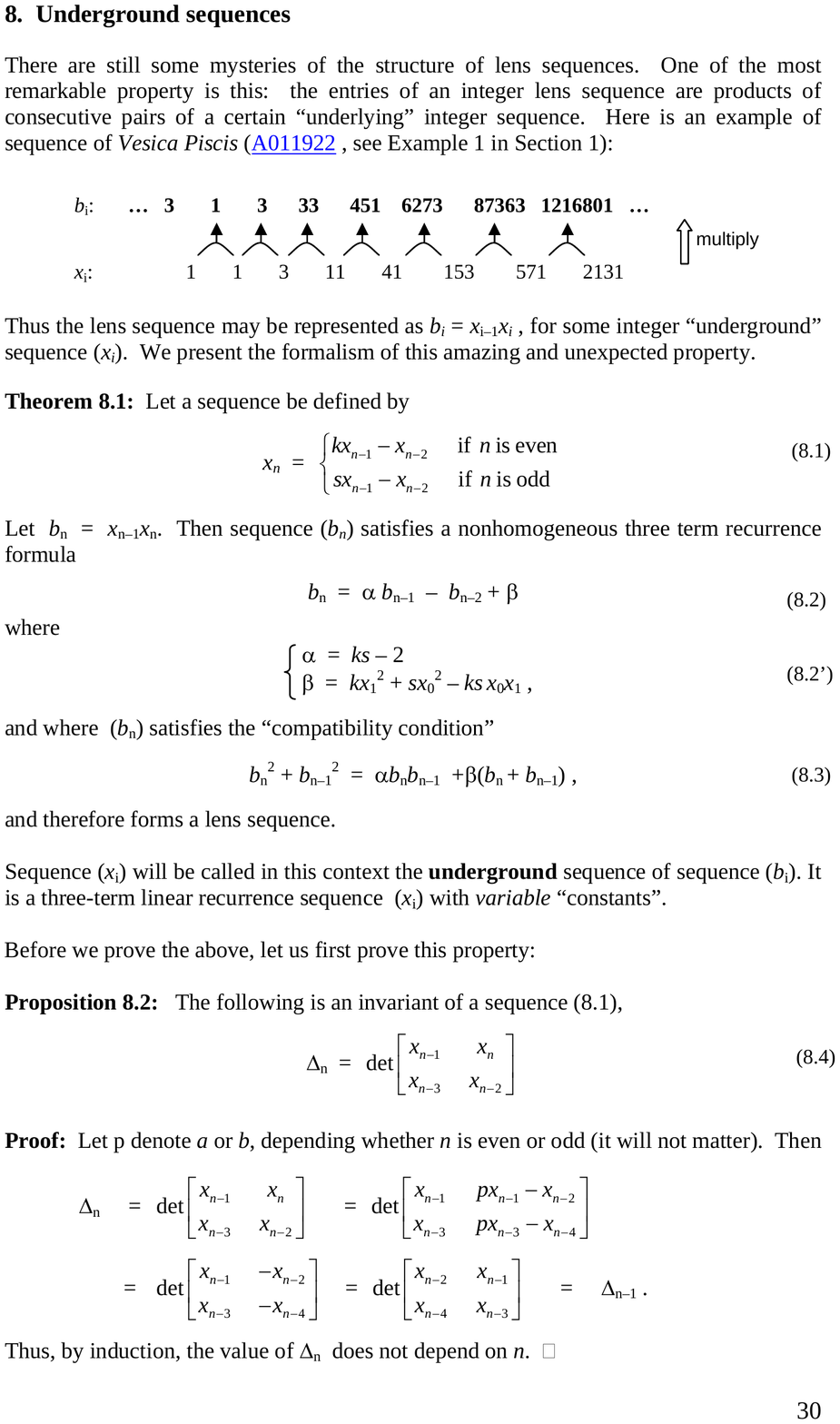} 
\]
\label{fig:8.1a}
\end{figure}
\vskip-.2in

\noindent
Thus the lens sequence may be represented as $b_{i}=f_{i-1}f_{i}$, for some  
integer sequence $(f_{i})$.
Sequence $(f_{i})$ will be called in this context the \textbf{underground} 
sequence of sequence $(b_{i})$  (Table 6.1 provides examples). 
We present the formalism of this amazing and unexpected property. 

Let us start with a general fact. 

\begin{theorem}  \label{th:fstep2} 
Any factorization $\{f_n\}$ of lens sequence in a sense that $b_n=f_{n-1}f_n$  
satisfies 3-term recurrence formula
\begin{equation} \label{eq:Lambda}
                           f_{n+2} + f_{n-2} = \alpha f_n
\end{equation}
\end{theorem}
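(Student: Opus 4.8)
The plan is to deduce the claim directly from the alternative expression for $\alpha$ recorded in Proposition~\ref{prp:4.8}, equation \eqref{eq:4.9}, namely
\[
  \alpha = \frac{b_{n-1}}{b_n} + \frac{b_{n+2}}{b_{n+1}}\,,
\]
which is valid for \emph{every} index $n$. This identity is perfectly adapted to the problem: it is built entirely out of ratios of consecutive terms of the lens sequence, and the hypothesized factorization $b_k = f_{k-1}f_k$ makes such ratios telescope almost completely. So the essential work is a single substitution followed by a cancellation, with no genuine recurrence-theoretic content beyond what Proposition~\ref{prp:4.8} already supplies.

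Concretely, I would substitute the four relevant factorized terms,
\[
  b_{n-1}=f_{n-2}f_{n-1},\quad b_n=f_{n-1}f_n,\quad b_{n+1}=f_nf_{n+1},\quad b_{n+2}=f_{n+1}f_{n+2},
\]
into the two fractions above. In the first ratio the shared factor $f_{n-1}$ cancels, leaving $f_{n-2}/f_n$; in the second the shared factor $f_{n+1}$ cancels, leaving $f_{n+2}/f_n$. Adding the two and clearing the single remaining denominator $f_n$ gives
\[
  \alpha f_n = f_{n-2} + f_{n+2}\,,
\]
which is exactly \eqref{eq:Lambda}. Because Proposition~\ref{prp:4.8} holds for arbitrary $n$, the resulting three-term relation on $(f_n)$ holds for all $n$ as well, and the index bookkeeping matches the statement precisely.

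The one point deserving care is that the argument passes through division by $b_n$, $b_{n+1}$, and $f_n$, so I must address the possibility that some factor vanishes. For a genuine lens sequence the terms are nonzero, and then $b_n = f_{n-1}f_n \neq 0$ forces every $f_n \neq 0$, so the cancellations above are legitimate and the proof is complete as written. To cover degenerate or purely formal cases that admit zero factors, I would fall back on the denominator-free identity $b_n^2 - \beta\,b_n = b_{n-1}b_{n+1}$ extracted inside the proof of Proposition~\ref{prp:4.8} and argue that the linear relation \eqref{eq:Lambda}, once established on the dense locus where $f_n \neq 0$, persists to the remaining indices by continuity. I do not anticipate any real obstacle here: the entire theorem reduces to the one cancellation driven by Proposition~\ref{prp:4.8}, and the only task requiring attention is recording the zero-factor caveat cleanly.
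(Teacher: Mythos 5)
Your proof is correct and is essentially identical to the paper's own argument, which likewise substitutes the factorization $b_k=f_{k-1}f_k$ into the identity $\alpha = b_{n-1}/b_n + b_{n+2}/b_{n+1}$ of Proposition~\ref{prp:4.8} and cancels the shared factors (the paper cites Proposition~\ref{prp:3.1} there, but that is a typo for Proposition~\ref{prp:4.8}). Your attention to the zero-denominator caveat is a small refinement the paper silently omits, but it does not change the route.
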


\begin{proof} 
Starting with the expression for $\alpha$ given in Proposition \ref{prp:3.1}, we have
\[
  \alpha =  \frac{b_{n-1} }{b_n }+\frac{b_{n+2} }{b_{n+1} } 
	= \frac{f_{n-2} f_{n-1} }{f_{n-1} f_n }
	+\frac{f_{n+1} f_{n+2} }{f_n f_{n+1}} 
	= \frac{f_{n-2} }{f_n }+\frac{f_{n+2} }{f_n }
	= \frac{f_{n-2} +f_{n+2} }{f_n }
\]
\end{proof}

It should be borne in mind that this property is true for {\em any} 
---not necessarily integer--- factorization of $\{b_i\}$.
Such factorizations are easy to produce, e.g., set 
$f_0=1$, $f_1=b_1$, $f_2=b_2/b_1$, $f_3=b_3b_1/b_0$, $f_4=b_4b_2b_0/b_3b_1$, etc.
However:

\begin{theorem}[Factorization theorem] 	\label{th:ic2} 
Any integer lens sequence $(b_n)$ may be factored into an integer sequence $(f_n)$
so that $b_n=f_{n-1}f_n$.  
If the lens sequence is primitive, the factorization is ---up to a sign ---unique.   
Moreover, in such a case $|f_n|=\gcd(b_n,b_{n+1})$.
\end{theorem}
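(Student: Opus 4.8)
The engine of the whole statement is the identity
\[
  |b_n| = \gcd(b_{n-1},b_n)\,\gcd(b_n,b_{n+1}),
\]
valid for every primitive integer lens sequence. Writing $g_n=\gcd(b_n,b_{n+1})$, I would first establish this and then simply set $f_n=\pm g_n$, choosing signs to fix the products. So the plan is: (i) prove the gcd identity, (ii) build the signed factorization and check uniqueness together with the $|f_n|=g_n$ clause, and (iii) reduce the non-primitive existence statement to the primitive case.

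For (i) I would work one prime $p$ at a time, comparing $p$-adic valuations $v_p$. Two ingredients combine. The arithmetic ingredient is the relation $b_{n-1}b_{n+1}=b_n(b_n-\beta)$, which is just the formula $\beta=(b_n^2-b_{n-1}b_{n+1})/b_n$ of Proposition \ref{prp:def} rearranged; since $\beta$ is an integer for an integer lens sequence, this yields $b_n\mid b_{n-1}b_{n+1}$, i.e. $\gcd(b_n,b_{n-1}b_{n+1})=|b_n|$. The combinatorial ingredient is that, by primitivity together with Proposition \ref{prp:4.11}, $\min\{v_p(b_{n-1}),v_p(b_n),v_p(b_{n+1})\}=0$ for every $p$; a short case check then shows $v_p\big(\gcd(b_n,b_{n-1}b_{n+1})\big)=\min(v_p(b_{n-1}),v_p(b_n))+\min(v_p(b_n),v_p(b_{n+1}))=v_p(g_{n-1})+v_p(g_n)$. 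Equating the two evaluations of $\gcd(b_n,b_{n-1}b_{n+1})$ gives $|b_n|=g_{n-1}g_n$. The same primitivity input shows $\gcd(g_{n-1},g_n)=1$, since their common divisor divides all three of $b_{n-1},b_n,b_{n+1}$. I expect the main obstacle to be exactly this valuation bookkeeping, and in particular the need to know $\beta\in\mathbb Z$ (equivalently $b_n\mid b_{n-1}b_{n+1}$): without it a prime $p$ could divide $b_n$ while avoiding both neighbours, and the identity would collapse.

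For (ii), given $|b_n|=g_{n-1}g_n$, I would set $f_n=\varepsilon_n g_n$ with signs $\varepsilon_n\in\{\pm1\}$ chosen so that $\varepsilon_{n-1}\varepsilon_n=\operatorname{sign}(b_n)$ for all $n$; since the index set $\mathbb Z$ carries no cycles this recursion is always solvable, with the single free choice of $\varepsilon_0$, and flipping $\varepsilon_0$ negates the entire sequence. Then $b_n=f_{n-1}f_n$ by construction, which proves existence and exhibits the ``up to a sign'' ambiguity. For uniqueness I would observe that any integer factorization must have $f_n\mid b_n$ and $f_n\mid b_{n+1}$, hence $|f_n|\mid g_n$; since $|f_{n-1}|\,|f_n|=|b_n|=g_{n-1}g_n$, a divisibility squeeze forces $|f_n|=g_n$ for all $n$. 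This is simultaneously the ``moreover'' clause and, combined with the sign analysis, uniqueness up to a global sign. The degenerate entries $b_n=0$ force the adjacent $f$'s to vanish and are treated directly.

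For (iii), the non-primitive case reduces to the primitive one: by Proposition \ref{prp:4.11} the common divisor $N=\gcd(b_k,b_{k+1},b_{k+2})$ is independent of $k$, so $b_n=N b_n'$ with $(b_n')$ primitive. Factor $(b_n')$ as $b_n'=f'_{n-1}f'_n$ by the primitive case and then redistribute the constant factor $N$ onto the odd-indexed terms, i.e. set $f_n=f'_n$ for $n$ even and $f_n=N f'_n$ for $n$ odd. Every product $f_{n-1}f_n$ then absorbs exactly one factor $N$, giving $f_{n-1}f_n=N f'_{n-1}f'_n=b_n$ without ever needing $N$ to be a perfect square. This establishes general existence, and Theorem \ref{th:fstep2} confirms that the resulting $(f_n)$ satisfies the clean recurrence $f_{n+2}+f_{n-2}=\alpha f_n$.
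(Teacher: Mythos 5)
Your proof is correct and takes a genuinely different route from the paper's. The paper constructs only four initial values from a seed $(a,b,c)$ --- namely $f_1=\gcd(a,b)$, $f_0=a/f_1$, $f_2=b/f_1$, $f_3=\gcd(a,b)\,c/b$ --- proves $f_3\in\mathbb{Z}$ from $\beta\in\mathbb{Z}$ (the same hinge $b_n\mid b_{n-1}b_{n+1}$ that powers your argument), and then propagates integrality to all $n$ through the linear recurrence $f_{n+2}+f_{n-2}=\alpha f_n$ of Theorem \ref{th:fstep2}; uniqueness is handled by a rational-ratio argument (writing $g_0/f_0=p/q$, noting the factors must alternate as $(p/q)^{\pm1}$, and deriving a contradiction with primitivity). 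You instead prove the global identity $|b_n|=\gcd(b_{n-1},b_n)\gcd(b_n,b_{n+1})$ by $p$-adic case analysis --- your valuation bookkeeping checks out, and it is essentially an independent proof of condition (i) of the paper's Integrality Criterion 2 --- and then define $f_n=\pm\gcd(b_n,b_{n+1})$ outright with a sign recursion. This buys three things: the clause $|f_n|=\gcd(b_n,b_{n+1})$ falls out for \emph{every} $n$ at once, whereas the paper obtains it only at the seed position and leaves the translation-invariance step implicit; uniqueness becomes a two-line divisibility squeeze instead of ratio bookkeeping; and, notably, you actually prove the non-primitive existence claim, which the paper's proof silently omits (it opens with ``assume $(b_n)$ is primitive'' and never returns to the general case) --- your redistribution of $N$ onto the odd-indexed terms is exactly the right device, since it never requires $N$ to be a square. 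What the paper's route buys in exchange is the structural tie-in with the underground recurrence, which your proof only invokes a posteriori.

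Two small points to tighten. In step (iii) you should add one line verifying that $(b_n/N)$ is again an integer lens sequence with \emph{integer} $\beta/N$ (needed before the primitive case applies); this follows from the proof of Proposition \ref{prp:4.11}, which shows $N\mid\beta$. And your remark that zero entries are ``treated directly'' deserves a caveat: zeros occur only in adjacent pairs, and at a double zero ($b_n=b_{n+1}=0$, as in $1,1,0,0,1,1,\ldots$) the factorization decouples into two halves whose signs can be flipped independently, so ``unique up to a sign'' fails as literally stated. This defect, however, lies in the theorem itself and afflicts the paper's proof equally (its construction divides by $b$ and Theorem \ref{th:fstep2} divides by $b_n$), so it does not count against your argument relative to the paper's.
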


\begin{proof}
Assume that $(b_n)$ is a primitive lens sequence.
Consider three consecutive terms $(a,b,c)$ and define
$$
f_0 = \frac{a}{\gcd(a,b)}
\quad
f_1 = \gcd(a,b)
\quad
f_2 = \frac{b}{f_1} = \frac{b}{\gcd(a,b)}
\quad
f_3 = \frac{c}{f_2} = \frac{\gcd(a,b)c}{b}
$$
Clearly, $a=f_0f_1$, $b=f_1f_2$, and $c=f_2f_3$.
We need to show that these four terms are integers. 
Terms $f_0$, $f_1$, and $f_2$ are integer by definition. 
As to the last term, use the formula $\beta = \frac{b^2-ac}{b}$: 
$$
\beta\in\mathbb{Z} \quad\Rightarrow\quad
\frac{ac}{b}\in\mathbb{Z} \quad\Rightarrow\quad
\frac{\gcd(ab)c}{b}\in\mathbb{Z}
$$
hence $f_3$ is an integer.  Thus $f_0,f_1,f_2,f_3\in\mathbb{Z}$ and the 
integrality of the whole sequence $(f_n)$ follows immediately from \ref{eq:Lambda}.

As to uniqueness of factorization of a primitive lens sequence, assume 
{\it a contrario} that
two integer quadruples, $\{f_0,f_1,f_2,f_3\}$ and $\{g_0,g_1,g_2,g_3\}$  
are the initial terms of two different factorizations of $(b_n)$.  
Then $g_0/f_0 = p/q$ for some mutually prime $p,q\in\mathbb{Z}$.
At least one of $p$ and $q$ is not 1; assume that it is $p\not=1$.
Since $g_ig_{i+1}=f_if_{i+1}$, we must have 
$$
\{g_0,g_1,g_2,g_3\} 
    = \{{\frac{p}{q}}\,f_0, 
        {\frac{q}{p}}\,f_1,
        {\frac{p}{q}}\,f_2, 
        {\frac{q}{p}}\,f_3 \} 
\ \subset \mathbb{Z} \,.
$$
Thus $q|f_0$ and $q|f_2$ (because $\gcd(p,q)=1$). 
But this means that 
$q\;|\; a$ (since $a\!=\! f_0f_1$), \  
$q\;|\; b$ (since $b\!=\! f_1f_2$), \ and 
$q\;|\; c$ (since $c\!=\! f_2f_3)$, 
against the assumption of primitivity of the lens sequence.
\end{proof}

The 3-term recurrence \ref{eq:Lambda} for the underground sequence involves only $\alpha$.  
Another interesting non-linear 4-term recurrence involves only $\beta$:

\begin{proposition} 	\label{prp:u4term} 
Any underground sequence $\{f_i\}$ of a lens sequence $\{b_i\}$ satisfies
the following quadratic recurrence formula:
\begin{equation}    \label{eq:Delta}
\det \left[ {\begin{array}{*{20}c}
             f_{n} \hfill & f_{n+1} \hfill \\
             f_{n+2}   \hfill & f_{n+3}  \hfill \\
\end{array} } \right] \equiv f_{n+3}f_{n} - f_{n+1}f_{n+2} = -\beta.
\end{equation}
\end{proposition}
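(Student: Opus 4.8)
The plan is to reduce the claimed quadratic identity directly to the standard closed form for $\beta$, exploiting the defining relation $b_n = f_{n-1}f_n$ together with the fact (Proposition \ref{prp:def}, item (iii)) that the constant formula
\[
  \beta = \frac{b_m^2 - b_{m-1}b_{m+1}}{b_m}
\]
is valid at \emph{every} index $m$, not merely one. Thus the whole argument is a substitution followed by a re-indexing, and I do not expect any genuine obstacle beyond careful bookkeeping of subscripts. Note that only the factorization relation $b_n=f_{n-1}f_n$ is used, so the result holds for \emph{any} factorization, integer or not, exactly as in Theorem \ref{th:fstep2}.

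First I would fix an index $m$ and substitute $b_m = f_{m-1}f_m$, $b_{m-1} = f_{m-2}f_{m-1}$, and $b_{m+1} = f_m f_{m+1}$ into the numerator $b_m^2 - b_{m-1}b_{m+1}$. The key observation is that both terms share the common factor $f_{m-1}f_m = b_m$, so that
\[
  b_m^2 - b_{m-1}b_{m+1} = f_{m-1}f_m\bigl(f_{m-1}f_m - f_{m-2}f_{m+1}\bigr).
\]
Dividing by $b_m = f_{m-1}f_m$ collapses the quotient to the compact expression
\[
  \beta = f_{m-1}f_m - f_{m-2}f_{m+1}.
\]

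Finally I would re-index, setting $n = m-2$ (so $m-1=n+1$, $m+1=n+3$, $m-2=n$), which turns the right-hand side into $f_{n+1}f_{n+2} - f_n f_{n+3}$. Negating then yields
\[
  f_{n+3}f_n - f_{n+1}f_{n+2} = -\beta,
\]
which is precisely \eqref{eq:Delta}. Because the $\beta$-formula holds at every $m$, the identity holds for all $n$, completing the proof. The only point deserving attention is that the substitution $b_k=f_{k-1}f_k$ must be legitimate at each index appearing above; this is automatic, since $(f_n)$ is by hypothesis a factorization of the \emph{entire} lens sequence, so the three relations used are simultaneously available for every choice of $m$.
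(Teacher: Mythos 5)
Your proposal is correct and is essentially the paper's own proof run in the opposite direction: the paper rewrites $f_n f_{n+3} - f_{n+1}f_{n+2}$ as $\frac{b_{n+1}b_{n+3}-b_{n+2}^2}{b_{n+2}}$ and invokes the formula $\beta = \frac{b_m^2 - b_{m-1}b_{m+1}}{b_m}$ at $m=n+2$, while you substitute $b_k=f_{k-1}f_k$ into that same formula, factor out $b_m$, and re-index. Both arguments rest on exactly the same two facts (the factorization relation and the validity of the $\beta$-formula at every index, with the same implicit division by $b_m\neq 0$), so your version is fine as written.
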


\begin{proof} 
For any $n$ we have
\[
\begin{aligned}
        f_n f_{n+3} - f_{n+1}f_{n+2} 
        &=\frac{f_n f_{n+1} f_{n+2} f_{n+3} }{f_{n+1} f_{n+2}} - f_{n+1}f_{n+2} \\
        &=\frac{ b_{n+1} b_{n+3} }{ b_{n+2} } - b_{n+2} 
        =\frac{ b_{n+1} b_{n+3} -b_{n+2}^2}{ b_{n+2} } 
        = - \beta \, \hfil
\end{aligned}
\]
\end{proof}

Note that 
not any initial quadruple $(f_1,f_2,f_3,f_4)$ leads via 
recurrence \ref{eq:Lambda} or \ref{eq:Delta}to a sequence that underlines a lens sequence.  
When do they?  First, we notice that 
the underground sequences of lens sequences have an interesting anatomy. 
It turns out that they are determined by three-term linear recurrences 
with \textit{variable} ``constants''. 
Here is the central theorem for the underground sequences:

\begin{theorem}[Underground Sequence Structure]  \label{th:8.1} 
(i) Let $k,s\in\mathbb{Z}$ be two constants.  Define a sequence $f$ by
\begin{equation}    \label{eq:8.1}
  f_{n}= \begin{cases}
	kf_{n-1} -f_{n-2} &\hbox{if  n is even}  \\
	sf_{n-1} -f_{n-2} &\hbox{if  n is odd.} 
	\end{cases} 
\end{equation}
with some arbitrary initial terms $f_0,f_1\in\mathbb{Z}$.
Define $b_{n}=f_{n-1}f_{n}$. Then $(b_{n})$ is a lens sequence. 
The constants of its recurrence formula
\begin{equation}    
  b_{n}=\alpha \, b_{n-1} - b_{n-2}+\beta
\end{equation}
are  
\begin{equation}    \label{eq:8.2}
  \begin{cases}
	\alpha = ks - 2 \\
	\beta = kf_{1}^{2} + sf_{0}^{2} - ks\,f_{0}f_{1}\, ,
	\end{cases}
\end{equation}
(ii) Every lens sequence is of such type.  
In particular, for a primitive lens sequence with a seed $(b_{-1},b_0,b_{1})=(a,b,c)$ 
the underground sequence is defined
\[
\begin{alignedat}{2}
  f_0 &= \gcd(a,b),         &\qquad f_1 &= \gcd(b,c), \\
  s   &= \frac{a+b}{f_0^2}, &\qquad   k &= \frac{b+c}{f_1^2}.
\end{alignedat}
\]
\end{theorem}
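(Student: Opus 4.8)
The plan is to prove the two directions by quite different means. For part (i) I would run the computation forward: starting from the alternating recurrence \eqref{eq:8.1}, show that the product sequence $b_n=f_{n-1}f_n$ satisfies the inhomogeneous three-term lens recurrence with exactly the constants in \eqref{eq:8.2}, and then invoke Proposition \ref{prp:def} to certify that it is a genuine lens sequence. For part (ii) I would run it backward: feed the given primitive lens sequence into the Factorization Theorem \ref{th:ic2} to obtain its essentially unique integer underground sequence, and then show that this sequence necessarily obeys an alternating one-step recurrence with integer constants $s,k$.

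Carrying out (i): First I would collapse two consecutive steps of \eqref{eq:8.1} into one. Writing an odd step followed by an even step and substituting the defining relation for the middle term, a two-line computation gives, irrespective of the parity of $n$,
\[
  f_{n+2}+f_{n-2}=(ks-2)\,f_n .
\]
Next I would show the cross-determinant $D_n:=f_nf_{n+1}-f_{n-1}f_{n+2}$ is independent of $n$: using the one-step recurrence to rewrite its last entry one checks $D_{n+1}=D_n$, and evaluating at $n=0$ with $f_2=kf_1-f_0$ and $f_3=skf_1-sf_0-f_1$ yields $D_0=kf_1^2+sf_0^2-ksf_0f_1$. Since $(b_n^2-b_{n-1}b_{n+1})/b_n=f_{n-1}f_n-f_{n-2}f_{n+1}=D_{n-1}$ is therefore the same for every $n$, Proposition \ref{prp:def}(iii) already guarantees that $(b_n)$ is a lens sequence, with $\beta=D_0$. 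Its $\alpha$ must equal $ks-2$ by Theorem \ref{th:fstep2} (applied to the factorization $(f_n)$), which is \eqref{eq:8.2}.

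Carrying out (ii): Let $(b_n)$ be primitive and let $(f_n)$ be the integer underground sequence from Theorem \ref{th:ic2}, normalized so that $f_0=\gcd(a,b)$, $f_1=\gcd(b,c)$, equivalently $b_n=f_nf_{n+1}$ and $f_n=\gcd(b_{n-1},b_n)$. Primitivity forces two coprimalities, $\gcd(f_{n-1},f_n)=1$ and $\gcd(f_n,f_{n+2})=1$, since any common factor would divide three consecutive terms of $(b_n)$. Set $c_n:=(f_{n-1}+f_{n+1})/f_n$. The heart of the argument is to prove two facts. The first is $c_n\in\mathbb{Z}$: dividing the compatibility condition \eqref{eq:4.7} for the pair $(b_{n-1},b_n)$ by $f_n$ gives $\beta\,(f_{n-1}+f_{n+1})=f_n\bigl(f_{n-1}^2+f_{n+1}^2-\alpha f_{n-1}f_{n+1}\bigr)$, and since Proposition \ref{prp:u4term} gives $\beta\equiv-f_{n-1}f_{n+2}\pmod{f_n}$, the coprimalities yield $\gcd(f_n,\beta)=1$, whence $f_n\mid f_{n-1}+f_{n+1}$. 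The second is the periodicity identity $c_nc_{n+1}=\alpha+2$: expanding $c_nc_{n+1}b_n=(f_{n-1}+f_{n+1})(f_n+f_{n+2})$ and regrouping, the terms $f_{n-1}f_n+f_{n+1}f_{n+2}=b_{n-1}+b_{n+1}$ contribute $\alpha b_n+\beta$ by the lens recurrence, while $f_{n-1}f_{n+2}+f_nf_{n+1}$ contributes $2b_n-\beta$ using $f_{n-1}f_{n+2}=b_n-\beta$ from Proposition \ref{prp:u4term}; the sum is $(\alpha+2)b_n$. From $c_nc_{n+1}=\alpha+2$ I get $c_{n+2}=c_n$, so the integer sequence $(c_n)$ alternates between the two values $s:=c_0=(a+b)/f_0^2$ and $k:=c_1=(b+c)/f_1^2$, and the relation $f_{n+1}=c_nf_n-f_{n-1}$ is precisely \eqref{eq:8.1}. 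A general integer lens sequence is a constant multiple of a primitive one, which reduces the ``every lens sequence is of such type'' clause to the primitive case.

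The hard part will be the second direction, and within it the integrality $c_n\in\mathbb{Z}$. This is the one place where primitivity is genuinely indispensable, and it hinges on the slightly hidden fact $\gcd(f_n,\beta)=1$, which in turn needs both coprimalities together with the determinant identity of Proposition \ref{prp:u4term}. The periodicity identity $c_nc_{n+1}=\alpha+2$ is the other linchpin: it is what upgrades ``three-term recurrence with variable coefficients'' to ``exactly two coefficients alternating by parity.'' Everything in part (i), and the remaining bookkeeping in part (ii), should be routine by comparison.
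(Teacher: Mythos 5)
Your proposal is correct, and while your part (i) runs on essentially the same rails as the paper, your part (ii) is genuinely different --- and, in fact, more complete. For (i), both arguments hinge on showing that the $2\times 2$ determinant $f_nf_{n+1}-f_{n-1}f_{n+2}$ is an invariant of \eqref{eq:8.1} (via the same parity observation: the coefficient used to expand the newest entry is the one of matching parity either way, so ``it does not matter''); you then certify the lens property through Proposition \ref{prp:def}(iii) applied to the constant $\beta$-formula and get $\alpha=ks-2$ from Theorem \ref{th:fstep2}, whereas the paper instead expands $b_{n+1}+b_{n-1}$ directly into $(ks-2)b_n-\Delta_n$ and then verifies the four-term identity $\alpha=b_{n-1}/b_n+b_{n+2}/b_{n+1}$ by hand --- interchangeable certifications. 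The real divergence is (ii): the paper disposes of it in two lines, defining the $f$'s by gcd's (leaning on the Factorization Theorem) and simply asserting $k=(f_0+f_2)/f_1$, $s=(f_1+f_3)/f_2$, without ever proving that these are integers or that the variable coefficient $c_n=(f_{n-1}+f_{n+1})/f_n$ is $2$-periodic, i.e., that the recurrence truly alternates between just two constants for all $n$. You supply exactly these missing pieces: integrality of $c_n$ via $\gcd(f_n,\beta)=1$ (which needs both primitivity-forced coprimalities $\gcd(f_{n-1},f_n)=\gcd(f_n,f_{n+2})=1$ together with $\beta\equiv-f_{n-1}f_{n+2}\pmod{f_n}$ from Proposition \ref{prp:u4term}), and the identity $c_nc_{n+1}=\alpha+2$, which forces $c_{n+2}=c_n$; note this identity holds for \emph{any} factorization, which also explains why non-primitive or non-integer lens sequences are ``of such type'' with rational constants, making your scaling remark dispensable. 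Your route buys a complete proof where the paper offers a sketch; the paper's buys brevity. Three slips to repair, all index bookkeeping of the kind the paper itself commits: your $D_0$ should be $D_1=f_1f_2-f_0f_3$ (harmless, since the determinant is constant); the compatibility condition \eqref{eq:4.7} you divide belongs to the pair $(b_n,b_{n+1})$, not $(b_{n-1},b_n)$; and in the periodicity computation $(f_{n-1}+f_{n+1})(f_n+f_{n+2})=c_nc_{n+1}f_nf_{n+1}$, which is $c_nc_{n+1}b_{n+1}$ in the paper's convention $b_n=f_{n-1}f_n$. Finally, state the standing nondegeneracy assumptions ($f_n\neq 0$, hence $c_{n+1}\neq 0$ when $\alpha\neq-2$), which the paper's own divisions by $f_n$ and $b_n$ tacitly make as well.
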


\begin{proof}
We start with part (ii).  Let $(f_i)$ be a sequence defined by \ref{eq:8.1}.
First, we shall show that the following expression
\begin{equation}    \label{eq:Delta}
  \Delta _{n}=\det \left[ {{\begin{array}{*{20}c}
  {f_{n-3} } \hfill & {f_{n-2} } \hfill \\
  {f_{n-1} } \hfill & {f_n } \hfill 
\end{array} }} \right]
\end{equation}
is an invariant of a sequence \eqref{eq:8.1}, that is it does not depend on $n$.
Indeed, let $p$ denote $k$ or $s$, depending on whether $n$ is even or odd 
(it will not matter!). Then
\[
\begin{alignedat}{2}
  \Delta _{n}&=\det \left[ {{\begin{array}{*{20}c}
 {f_{n-3} } \hfill & {f_{n-2} } \hfill \\
 {f_{n-1} } \hfill & {f_n } \hfill \\
\end{array} }} \right] & \ &=\det \left[ {{\begin{array}{*{20}c}
 {f_{n-3} } \hfill & {pf_{n-3} -f_{n-4} } \hfill \\
 {f_{n-1} } \hfill & {pf_{n-1} -f_{n-2} } \hfill \\
\end{array} }} \right] \\
&= \det \left[ {{\begin{array}{*{20}c}
 {f_{n-3} } \hfill & {-f_{n-4} } \hfill \\
 {f_{n-1} } \hfill & {-f_{n-2} } \hfill \\
\end{array} }} \right] & \ &=\det \left[ {{\begin{array}{*{20}c}
 {f_{n-2} } \hfill & {f_{n-1} } \hfill \\
 {f_{n-4} } \hfill & {f_{n-3} } \hfill \\
\end{array} }} \right] \quad =\Delta _{n-1}\,.
	\\
\end{alignedat}	
\]
Thus, by induction, the value of $\Delta _{n}$ does not depend on $n$. 
Hence it may be brought down to the first two terms of 
the sequence and, after simple substitution, shown to be 
\[
  \Delta _{n} = kf_{1}^{2} + sf_{0}^{2} - ks\,f_{0}f_{1}\, .
\]

As to the recurrence formula, calculate the sum 
$b_{n-1}+b_{n+1}$ :
\begin{equation}    \label{eq:8.5}
\begin{aligned}
  b_{n+1}+b_{n-1} &= f_n f_{n+1} + f_{n-2} f_{n-1} 
	= (af_{n-1} - f_{n-2})(bf_n - f_{n-1}) + f_{n-2} f_{n-1} \\
   &= ks\,f_{n-1} f_n - kf_{n-1}^2 - sf_n f_{n-2} 
	+ f_{n-1} f_{n-2} + f_{n-1} f_{n-2}  \\
   &= ks\,f_{n-1} f_n - f_{n-1} (kf_{n-1} - f_{n-2})
	- f_{n-2} (sf_n - f_{n-1}) 	\\
   &= ks\,f_{n-1} f_n - f_{n-1} f_n - f_{n-2} f_{n+1}  \\
   &= ks\,f_{n-1} f_n - 2f_{n-1} f_n + f_{n-1} f_n 
	- f_{n-2} f_{n+1} 	\\
   &= (ks-2)\,f_{n-1} f_n + (f_{n-1} f_n - f_{n-2} f_{n+1})  \\
   &= (ks-2)\,b_n -\Delta _n \,.
\end{aligned}
\end{equation}
Now, rename $ks - 2 = \alpha$, and  
$\Delta _{n} =-\beta$ (as it does not depend on $n$). 
Then \eqref{eq:8.5} becomes an inhomogeneous three-term 
recurrence formula 
\[
  b_{n+1} + b_{n-1} = \alpha \,b_n +\beta \,.
\]
Finally, we need to show that $(b_n)$ is actually a lens system sequence. 
As for $\alpha$, we need simply to show
\[
  \alpha = \frac{b_{n-1} }{b_n }+\frac{b_{n+2} }{b_{n+1} }
\]
(see Proposition \ref{prp:3.1}). Consider the right hand side:
\[
  \frac{b_{n-1} }{b_n }+\frac{b_{n+2} }{b_{n+1} } 
	= \frac{f_{n-2} f_{n-1} }{f_{n-1} f_n }
	+\frac{f_{n+1} f_{n+2} }{f_n f_{n+1}} 
	= \frac{f_{n-2} }{f_n }+\frac{f_{n+2} }{f_n }
	= \frac{f_{n-2} +f_{n+2} }{f_n }
	= \alpha \,,
\]
where the last step is true because of \eqref{eq:8.1}. Indeed, assume $n$ is even:
\[
\begin{aligned}
  \frac{f_{n-2} +f_{n+2} }{f_n }
	&= \frac{f_{n-2} +kf_{n+1} -f_n }{f_n }	\\
	&= \frac{f_{n-2} +k(sf_n -f_{n-1} )-f_n }{f_n } 	\\
	&= \frac{ksf_n -kf_{n-1} -f_{n-2} -f_n }{f_n }	 	\\
	&= \frac{ksf_n -f_n -f_n }{f_n }	\\
	&= ks-2=\alpha \,.
\end{aligned}
\]
As to the formula for $\beta$ in terms of a seed, 
follow similar calculations as in the proof of Proposition \ref{prp:u4term}:
\[
  \Delta _{n}=\det \left[ {{\begin{array}{*{20}c}
	{f_0 } \hfill & {f_1 } \hfill \\
	{f_2 } \hfill & {f_3 } \hfill \\
  \end{array} }} \right]
        =  f_0 f_3 -f_1 f_2 
	= \frac{f_0 f_1 f_2 f_3 - (f_1 f_2)^2 }{f_1 f_2 }  
        = \frac{b_1 b_3 - b_2^2}{b_2 }  
	= - \beta \,,
\]
This ends the proof of (i).
The proof of (ii) follows easily.
Let $(a,b,c)$ be a primitive seed of an integer sequence.
Define a quadruple of numbers $(f_0,f_1,f_2,f_3,f_4)$
by setting $f_1 = \gcd(a,b)$,  $f_2 = \gcd(b,c)$,
and $f_0 = a/f_1$, and $f_4=c/f_2$.
Then $k=(f_0+f_2)/f_1$ and $s=(f_1+f_3)/f_2$.          
\end{proof}

\begin{remark}
Note that $\Delta _{i}$ may be viewed as a quadratic form given by the matrix 
\[
  G=\left[ {{\begin{array}{*{20}c}
	k & ks  \\
	0 & s  \\
  \end{array} }} \right]
\]
evaluated on the vector $\mathbf{v} = [ f_{0}, f_{1}]^{T}$, i.e., 
$\Delta _{n} = \mathbf{v}^{T}G\mathbf{v}$.
In particular, vectors $(f_i,f_{i+1})\in\mathbb{R}$ 
all stay on a quadratic defined by $G$.
\end{remark}

As a corollary of the above theorem, we arrive at a simple criterion on whether 
a triple of integers is a good candidate for an integer lens sequence,
improving that of Proposition \ref{prp:ic}:

\begin{theorem}[Integrality Criterion 2]  \label{th:ic2} 
A triple $(a,b,c)\subset\mathbb{Z}$ is a seed of an integer lens sequence 
iff 
$$
\begin{aligned}
 (i)    &\ \gcd(a,b)\gcd(b,c)=b\gcd(a,b,c)\\
 (ii)   &\ (\gcd(a,b))^2 \ \ \hbox{\rm divides } \ (a+b)\gcd(a,b,c)\\
 (iii)  &\ (\gcd(b,c))^2 \ \ \hbox{\rm divides } \ (b+c)\gcd(a,b,c)
\end{aligned}
$$
\end{theorem}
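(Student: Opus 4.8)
The plan is to reduce the whole statement to the primitive case and then to recognize the three conditions as exactly the requirements that the underground sequence of Theorem~\ref{th:8.1} exist over the integers. First I would record that all three conditions are homogeneous under pulling out the common factor $g=\gcd(a,b,c)$. Writing $a=ga'$, $b=gb'$, $c=gc'$ with $\gcd(a',b',c')=1$, one has $\gcd(a,b)=g\gcd(a',b')$ and $\gcd(b,c)=g\gcd(b',c')$, so condition~(i) becomes $g^2\gcd(a',b')\gcd(b',c')=g^2b'$, i.e.\ $\gcd(a',b')\gcd(b',c')=b'$, and conditions~(ii),(iii) shed their common factor $g^2$ and become the corresponding statements for $(a',b',c')$. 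Since $\alpha$ is scale-invariant and $\beta$ scales by $g$, the lens sequence extended from $(a,b,c)$ equals $g$ times the one extended from $(a',b',c')$; by Proposition~\ref{prp:4.11} every term of the former is divisible by $g$, so one is integral iff the other is. Hence it suffices to treat the primitive case, where $\gcd(a,b,c)=1$ and the conditions read (i)~$\gcd(a,b)\gcd(b,c)=b$, (ii)~$\gcd(a,b)^2\mid(a+b)$, (iii)~$\gcd(b,c)^2\mid(b+c)$. Throughout I take gcd's positive and $b>0$, which is harmless since any three consecutive terms form a seed and one may choose one with positive middle term.

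For sufficiency I set $f_0=\gcd(a,b)$ and $f_1=\gcd(b,c)$, so that (i) gives $f_0f_1=b$, while (ii) and (iii) make $s=(a+b)/f_0^2$ and $k=(b+c)/f_1^2$ integers. Extending $(f_n)$ by the two-periodic recurrence~\eqref{eq:8.1} with these $k,s$ yields an integer sequence, and by Theorem~\ref{th:8.1}(i) the sequence $b_n:=f_{n-1}f_n$ is an integer lens sequence with constants~\eqref{eq:8.2}. It then remains to confirm its seed is $(a,b,c)$: directly $f_0f_1=b$, next $f_2=kf_1-f_0=(b+c)/f_1-b/f_1=c/f_1$ so $f_1f_2=c$, and $f_{-1}=sf_0-f_1=(a+b)/f_0-b/f_0=a/f_0$ so $f_{-1}f_0=a$. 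Thus $(a,b,c)$ generates an integer lens sequence.

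For necessity I start from an integer primitive lens sequence with seed $(a,b,c)$ and invoke the Factorization Theorem: it factors, uniquely up to sign, into an integer sequence $(f_n)$ with $b_n=f_{n-1}f_n$ and $|f_n|=\gcd(b_n,b_{n+1})$. In particular $|f_0|=\gcd(a,b)$ and $|f_1|=\gcd(b,c)$, so $b=f_0f_1$ gives~(i). Theorem~\ref{th:8.1}(ii) then identifies this factorization with the two-periodic sequence whose integer constants are $s=(a+b)/f_0^2$ and $k=(b+c)/f_1^2$; the integrality of $s$ and of $k$ is precisely~(ii) and~(iii).

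The main obstacle I anticipate is bookkeeping rather than conceptual: making the scaling reduction line up so that the $\gcd(a,b,c)$ factors in the three conditions are seen to be exactly the normalization needed, and keeping the sign and parity conventions of the underground recurrence~\eqref{eq:8.1} consistent across the two directions. The genuinely load-bearing step is condition~(i): it is what forces the two independently defined end-factors $f_0=\gcd(a,b)$ and $f_1=\gcd(b,c)$ to multiply back to the middle term $b$, gluing the left and right halves of the factorization together. Once that compatibility holds, (ii) and (iii) are nothing more than the integrality of the alternating recurrence constants $s$ and $k$, and Theorem~\ref{th:8.1} does the rest.
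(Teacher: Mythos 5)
Your proof is correct and follows essentially the route the paper intends: the paper states this criterion without proof, as a corollary of the Underground Sequence Structure theorem (Theorem \ref{th:8.1}) together with the Factorization theorem, and your argument---reducing to the primitive case via Proposition \ref{prp:4.11}, then reading (i) as $b=f_0f_1$ with $f_0=\gcd(a,b)$, $f_1=\gcd(b,c)$, and (ii),(iii) as integrality of the alternating constants $s=(a+b)/f_0^2$, $k=(b+c)/f_1^2$, with sufficiency from part (i) and necessity from part (ii) of Theorem \ref{th:8.1}---is exactly that derivation written out in full. The one caveat, which the paper itself also glosses over, is the sign convention: for a seed with negative middle term (e.g., $(1,-2,6)$ in \seqnum{A001654}) condition (i) can only hold with $|b|$ in place of $b$, so your normalization $b>0$ is the appropriate reading rather than a gap in your argument.
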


\begin{figure}[h] 
\[
\includegraphics[scale=1.1]{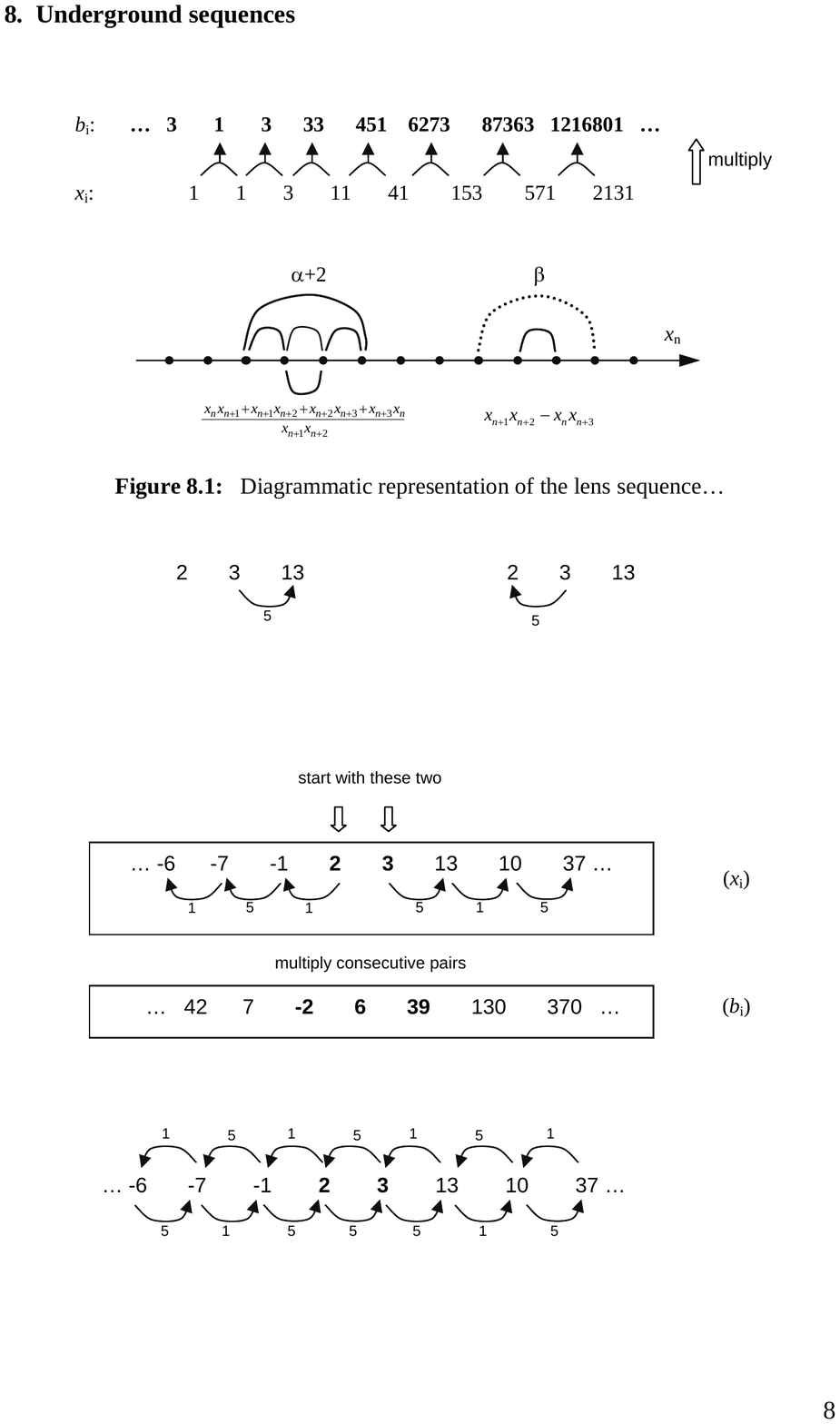} 
\]
\caption{Diagrammatic representation of the lens sequence 
invariants, calculated from the underground sequence (cf. Fig.\ \ref{fig:3.2})}
\label{fig:8-1b}
\end{figure}

The above result leads to another property of lens sequences:

\begin{corollary} 	\label{cor:8.3} 
The sum of any pair of consecutive terms of a lens sequence is 
a multiple of a square, namely:
\[
  b_{n}+b_{n+1}=\left\{ {{\begin{array}{*{20}c}
	{kf_n^2 \quad \mbox{if }n\mbox{ is even}} \hfill \\
	{sf_n^2 \quad \mbox{if }n\mbox{ is odd}\;} \hfill \\
  \end{array} }} \right.=\left\{ {{\begin{array}{*{20}c}
	{k\cdot \mbox{square}\quad \mbox{if }n\mbox{ is even}} 
		\hfill \\
	{s\cdot \mbox{square}\quad \mbox{if }n\mbox{ is odd.}\;}
		\hfill \\
  \end{array} }} \right.
\]
\end{corollary}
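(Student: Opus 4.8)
The plan is to read this off directly from the factorization $b_n = f_{n-1}f_n$ supplied by Theorem~\ref{th:8.1}, together with the defining recurrence \eqref{eq:8.1} for the underground sequence. First I would exploit the shared factor between consecutive entries: since $b_n=f_{n-1}f_n$ and $b_{n+1}=f_nf_{n+1}$ both contain $f_n$, I write
\[
  b_n + b_{n+1} = f_{n-1}f_n + f_n f_{n+1} = f_n\,(f_{n-1}+f_{n+1}).
\]
Everything then hinges on simplifying the bracketed sum $f_{n-1}+f_{n+1}$, so this first step reduces the corollary to a one-line identity about the underground sequence.

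Next I would invoke the recurrence \eqref{eq:8.1} in the form that expresses $f_{n+1}$ from its two predecessors $f_n,f_{n-1}$. Writing the rule for the index $n+1$, exactly one of the two branches applies according to the parity of $n+1$: either $f_{n+1}=k f_n - f_{n-1}$ or $f_{n+1}=s f_n - f_{n-1}$. In both branches the trailing $-f_{n-1}$ cancels the $f_{n-1}$ already present, so that $f_{n-1}+f_{n+1}$ collapses to $k f_n$ or to $s f_n$ respectively. Substituting back into the display above yields $b_n+b_{n+1}=k f_n^2$ or $s f_n^2$, which is the asserted formula once the parity convention (which of $k,s$ is paired with even versus odd $n$) is read off from \eqref{eq:8.1}.

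Finally I would record the arithmetic conclusion that justifies the phrase \emph{multiple of a square}: by the Factorization Theorem the underground terms $f_n$ are integers, and the structure theorem gives $k,s\in\mathbb{Z}$, so each sum $b_n+b_{n+1}$ is an honest integer multiple of the perfect square $f_n^2$. I expect no genuine obstacle here, since the cancellation in the second step is immediate and the rest is direct substitution; the only point demanding care is the parity bookkeeping in that step, i.e.\ correctly matching the branch of \eqref{eq:8.1} used for $f_{n+1}$ to the parity of $n$ so that the two cases come out in the stated order.
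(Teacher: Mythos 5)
Your proof is correct and is essentially the same as the paper's own one-line argument: factor out the common term to get $b_n+b_{n+1}=f_n(f_{n-1}+f_{n+1})$ and apply the recurrence \eqref{eq:8.1} at index $n+1$ to collapse the bracket to $kf_n$ or $sf_n$. Your flagged caution about parity bookkeeping is well placed: applying \eqref{eq:8.1} at index $n+1$ gives the $k$-branch when $n+1$ is even (i.e., $n$ odd), so matching the stated even/odd labels in the corollary requires fixing the indexing convention --- a point the paper's proof itself glosses over with ``$p$ stands for $k$ or $s$, depending on the parity of $n$.''
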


\begin{proof} Elementary:  $b_{n}+b_{n+1}=f_n f_{n-1} +f_{n+1} f_n = 
f_n (f_{n+1} +f_{n-1} )=f_n pf_n$, where $p$ stands for $k$ or $s$, 
depending on the parity of $n$. 
\end{proof}

For example, the sums of two consecutive entries of \seqnum{A011922} are perfect squares:
\[
\includegraphics[scale=.9]{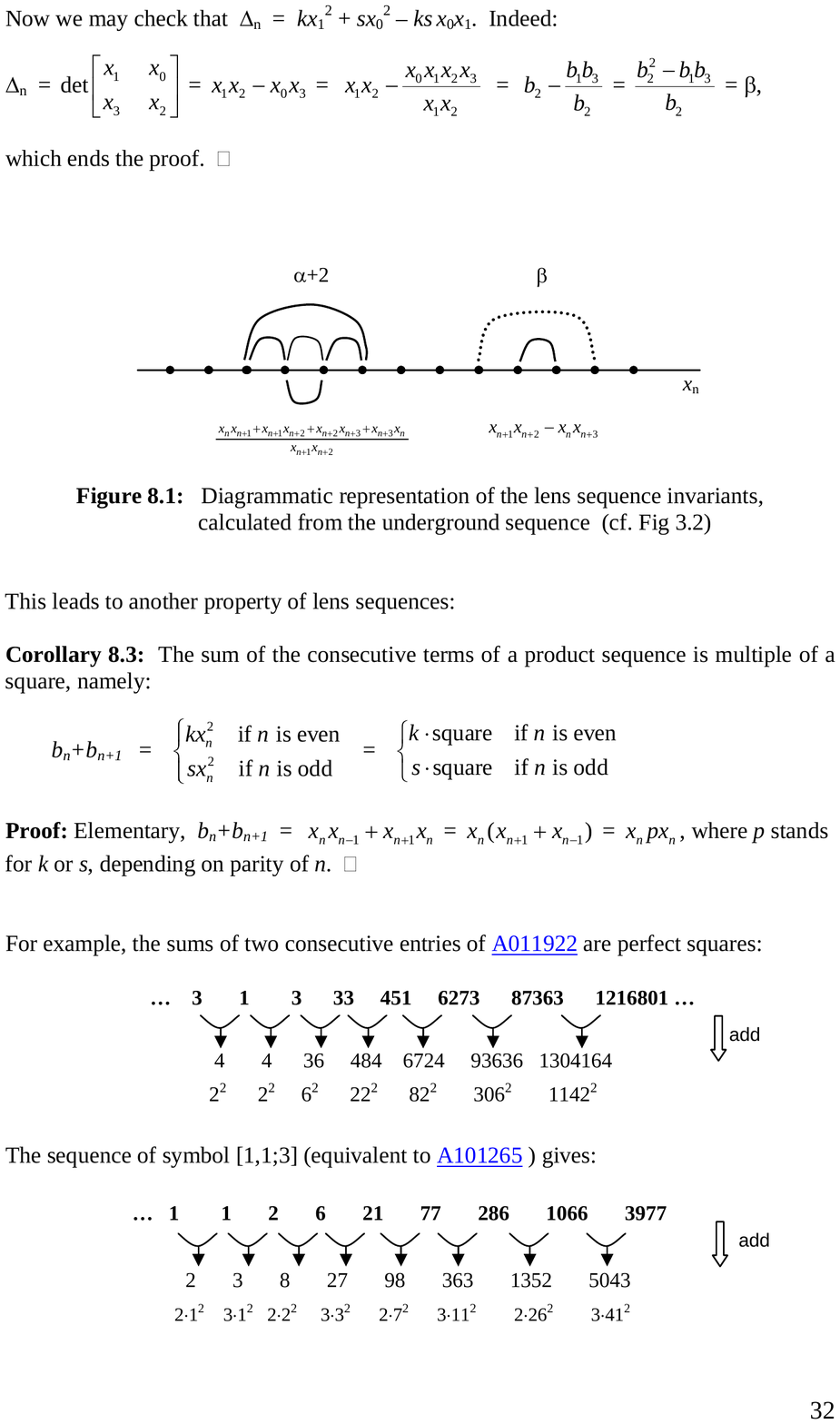} 
\]

The sequence \seqnum{A101265} (of label [1,1;3]) gives:
\[
\includegraphics[scale=.9]{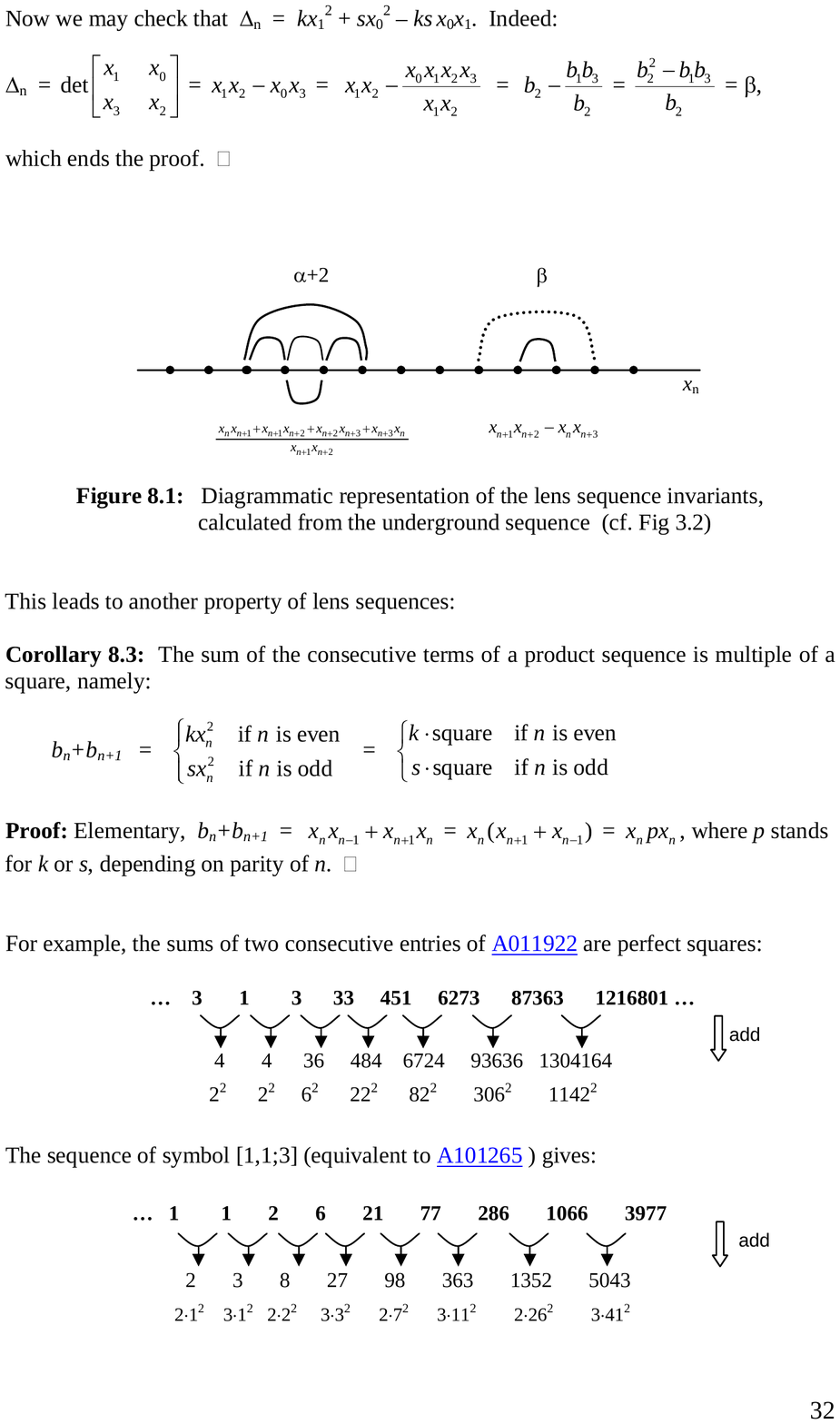} 
\]

\subsection*{Generating lens sequences}

Let us return to the question of generating integer lens sequences. In Section \ref{S2}, we considered triplets of the form $(a,b;k)$ that label a large family of lens sequences (see \eqref{eq:3.1}), but not all of them. The existence of underground sequences allows one to label all lens sequences.

\begin{definition} 	\label{def:Delta} 
A \textbf{symbol} of a lens sequence is the quadruple $^{s}(p, q)^{k}$ which defines the underground 
sequence $(f_{i})$ with $f_{0} = p$, $f_{1} = q$, and with constant $s$ and $k$ in \eqref{eq:8.1}, 
and therefore defines the corresponding lens sequence $(b_{i})$, namely, $b_{i}=f_{i-1}f_{i}$. 
More directly, symbol $^{s}(p, q)^{k}$ defines a lens sequence via its 
seed $(a,b,c) = ((sp - q)p, pq, q(kq - p))$. 
\end{definition}

Any integer quadruple $^{s}(p, q)^{k}$ leads to an integer lens sequence. And 
vice versa, given a seed of a primitive sequence $(a,b,c)$, we easily 
reproduce the symbol: 
\[
\begin{alignedat}{2}
  p &= \gcd(a,b), &\qquad k &= \frac{b+c}{q^2}, \\
  q &= \gcd(b,c), &\qquad s &= \frac{a+b}{p^2}.
\end{alignedat}
\]

\begin{proposition} 	\label{prp:8.5} 
The lens sequence generated by $^{s}(p, q)^{k}$ is primitive if and only if 
\begin{equation}    \label{eq:8.6}
  \gcd(p,q) = \gcd(p,k) = \gcd(s,q) = 1.
\end{equation}
\end{proposition}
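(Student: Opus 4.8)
The plan is to reduce primitivity to a statement about a single prime $\ell$ dividing every term of $(b_n)$, and then to read off which of the three gcd conditions is violated from the parity pattern of the underground sequence modulo $\ell$. By Definition \ref{def:4.10} together with Proposition \ref{prp:4.11}, the sequence $(b_n)$ is primitive if and only if no prime $\ell$ divides all of its terms. Since $b_n = f_{n-1}f_n$, a prime $\ell$ divides every $b_n$ precisely when, for each $n$, at least one of $f_{n-1},f_n$ vanishes modulo $\ell$ — that is, when no two consecutive terms of $(f_n)$ are simultaneously nonzero mod $\ell$. So the entire argument turns on the zero pattern of $(f_n)\bmod\ell$ under the two-branch recurrence \eqref{eq:8.1}.

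For the implication that the three gcd conditions force primitivity, I would argue contrapositively: assume a prime $\ell$ divides every $b_n$ and deduce that $\ell$ divides one of $\gcd(p,q)$, $\gcd(p,k)$, $\gcd(s,q)$. If every $f_n\equiv 0\pmod\ell$, then $\ell\mid f_0=p$ and $\ell\mid f_1=q$, so $\ell\mid\gcd(p,q)$. Otherwise pick $m$ with $f_m\not\equiv 0$; the ``no two consecutive nonzero'' constraint forces $f_{m\pm1}\equiv 0$, and substituting $f_{m+1}\equiv 0$ into \eqref{eq:8.1} gives $f_{m+2}\equiv -f_m\not\equiv 0$, with the backward analogue $f_{m-2}\equiv -f_m$. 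By induction the nonzero residues then occupy exactly one parity class of indices. If the even indices carry the nonzero residues, then $f_1=q\equiv 0$, and applying \eqref{eq:8.1} at an odd index $n$ (where $f_{n-2}\equiv 0$ but $f_{n-1}\not\equiv 0$) gives $0\equiv s\,f_{n-1}$, forcing $\ell\mid s$; hence $\ell\mid\gcd(s,q)$. The complementary case (odd indices nonzero) symmetrically yields $\ell\mid p$ and $\ell\mid k$, so $\ell\mid\gcd(p,k)$. Thus if all three gcds equal $1$ no prime divides every term, and $(b_n)$ is primitive.

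For the converse I would treat each failing gcd separately and exhibit a prime dividing every $b_n$. If $\ell\mid\gcd(p,q)$, then $\ell\mid f_0$ and $\ell\mid f_1$, so $\ell\mid f_n$ for all $n$ by induction on \eqref{eq:8.1} run in both directions (the recurrence is invertible, as $f_{n-2}$ is recoverable from $f_{n-1},f_n$), whence $\ell\mid b_n$ throughout. If $\ell\mid\gcd(p,k)$, then $\ell\mid f_0$ and, using $\ell\mid k$ in the even-index branch of \eqref{eq:8.1}, an induction in both directions shows $f_n\equiv 0$ for all even $n$; since each $b_n=f_{n-1}f_n$ carries an even-indexed factor, $\ell\mid b_n$ for all $n$. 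The case $\ell\mid\gcd(s,q)$ is the mirror image via the odd-index branch, giving $f_n\equiv 0$ for all odd $n$. In every case the sequence fails to be primitive.

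The only genuinely delicate point is the combinatorial claim in the forward direction that, when not all $f_n$ vanish mod $\ell$, the nonzero residues lie in a single parity class; everything else is bookkeeping with the alternating recurrence. I expect the induction $f_{m+2}\equiv -f_m$ together with its backward counterpart to be the crux, since it is exactly what rules out ``mixed'' zero patterns and pins the obstruction onto one of $k$ or $s$ paired with its matching initial value.
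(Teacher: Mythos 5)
Your proof is correct, but it follows a genuinely different route from the paper's. The paper argues locally: it writes the four central underground terms $f_{-1}=sp-q$, $f_0=p$, $f_1=q$, $f_2=kq-p$, hence the central seed $(a,b,c)=((sp-q)p,\,pq,\,q(kq-p))$, and reduces primitivity (via the gcd-invariance of Proposition \ref{prp:4.11}) to the single arithmetic statement $\gcd(a,b,c)=1$, which is then matched against \eqref{eq:8.6} by inspecting these explicit products --- indeed the paper only spells out the implication ``primitive $\Rightarrow$ \eqref{eq:8.6}'' and leaves the converse implicit. You instead work globally modulo a prime $\ell$: you characterize failure of primitivity as ``no two consecutive $f_n$ are nonzero mod $\ell$,'' and your parity analysis ($f_{m+2}\equiv -f_m$ when $f_{m+1}\equiv 0$, and its backward counterpart, which pins the nonzero residues onto one parity class) correctly forces $\ell$ into exactly one of $\gcd(p,q)$, $\gcd(p,k)$, $\gcd(s,q)$; your converse, propagating $\ell\mid\gcd(p,k)$ or $\ell\mid\gcd(s,q)$ through the matching branch of \eqref{eq:8.1} to kill one full parity class of the $f_n$, is likewise sound because each $b_n=f_{n-1}f_n$ contains one index of each parity. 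What the paper's approach buys is brevity --- a one-line check on explicit seed formulas; what yours buys is completeness (both directions are fully argued, where the paper is terse) and extra structural information: you show that any prime dividing every term of a lens sequence forces the underground sequence mod $\ell$ to vanish identically or on an entire parity class, which is a sharper statement than the proposition itself and resonates with Corollary \ref{cor:8.3}. The one point to keep honest is your opening reduction --- ``primitive iff no prime divides all terms'' --- which does require Proposition \ref{prp:4.11} in the direction you cite it, since otherwise a prime could a priori divide all terms of some windows but not others; you invoke it correctly.
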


\begin{proof} 
Write the ``central'' four terms of the underground sequences and the corresponding lens sequence: 
\[
\begin{aligned}
  (f_{i}): \qquad &{\ldots}, f_{-1} = (sp - q), 
	f_{0} = p, f_{1} = q, f_{2} = (kq - p), {\ldots}  \\
  (b_{i}): \qquad &{\ldots} , a = (sp - q)p, b = pq, 
	c = q(kq - p), {\ldots}
\end{aligned}
\]
For $(b_{i})$ to be primitive we must have $\gcd(a,b,c)=1$, which implies \eqref{eq:8.6}. 
\end{proof}

\begin{corollary} 	\label{cor:8.6} 
If the underground sequence $(f_i)$ contains $p=\pm1$, then the corresponding lens sequence $(b_i)$ admits label $(a,b;k)$. 
More precisely:
\[
\begin{gathered}
  (a, b; k) \hbox{ corresponds to the symbol } 	^{a+b}(1, b)^{k} \,;	\\
  \hbox{the symbol }  ^{s}(1, b)^{k} \hbox{ corresponds to } 	(s-b, b; k) \,.
\end{gathered}
\]
\end{corollary}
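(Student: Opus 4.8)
The plan is to read the corollary as a bookkeeping identity between the two labelling schemes for one and the same lens sequence: the label $(a,b;k)$ of Section~\ref{S4}, whose seed is $(a,b,b(bk-1))$ by \eqref{eq:4.10}, and the symbol ${}^{s}(p,q)^{k}$ of Definition~\ref{def:Delta}, whose seed is $((sp-q)p,\,pq,\,q(kq-p))$. Since both schemes ultimately specify a seed $(a,b,c)$, and a seed determines the whole sequence, the cleanest route is to compare the two seed formulas directly rather than to propagate the recurrences.

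First I would dispatch the two explicit correspondences by substitution, because setting $p=1$ collapses the symbol's seed onto the label's seed. With $p=1$ and $q=b$ the symbol seed becomes $((s\cdot1-b)\cdot1,\ 1\cdot b,\ b(kb-1)) = (s-b,\,b,\,b(bk-1))$, which is exactly the seed attached to the label $(s-b,b;k)$ via \eqref{eq:4.10}; this proves that ${}^{s}(1,b)^{k}$ corresponds to $(s-b,b;k)$. Specialising $s=a+b$ rewrites this seed as $(a,\,b,\,b(bk-1))$, the seed of the label $(a,b;k)$, giving the reverse correspondence to the symbol ${}^{a+b}(1,b)^{k}$. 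Both are routine once the two seed formulas are placed side by side.

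The one step with actual content is the opening assertion, that a single unit $f_{j}=\pm1$ in the underground sequence already forces the sequence to admit a label. Here I would use the uniqueness-up-to-sign of the factorization (the Factorization theorem) to normalise. Re-index by $g_{n}=f_{n+j}$ so that the unit sits at position $0$, and, if it is $-1$, pass to $-g_{n}$; since $b_{n}=f_{n-1}f_{n}=(-f_{n-1})(-f_{n})$, this overall sign change leaves the lens sequence untouched while arranging $g_{0}=1$. The normalised symbol then has first entry $1$, so the second paragraph applies and produces a label.

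The subtlety I expect to flag, rather than a true obstacle, is the parity bookkeeping in this normalisation: the recurrence \eqref{eq:8.1} uses $k$ at even indices and $s$ at odd ones, so shifting by an odd $j$ interchanges the roles of $s$ and $k$. This is harmless---the shifted symbol is then ${}^{k}(1,q')^{s}$ for the appropriate $q'=g_{1}$, which by the same substitution corresponds to the label $(k-q',\,q';\,s)$---but it should be stated so that ``admits a label'' is understood as ``admits some label''. Beyond this observation the proof is purely computational.
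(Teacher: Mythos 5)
Your proof is correct and takes essentially the same route as the paper, which states the corollary without a separate argument precisely because it follows by substituting $p=1$, $q=b$ into the seed formula $((sp-q)p,\,pq,\,q(kq-p))$ of Definition~\ref{def:Delta} and matching it against the seed $(a,\,b,\,b(bk-1))$ attached to the label by \eqref{eq:4.10} --- exactly your computation. Your additional normalisation of a unit $f_j=\pm1$ to position $0$ (the harmless global sign flip, and the swap of $s$ and $k$ under an odd shift of the parity-dependent recurrence \eqref{eq:8.1}) correctly fills in details the paper leaves implicit.
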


\begin{remark} 		\label{rem:8.6}
To use this generator of sequences as a unique \textit{label} system for lens sequences, one would have to remove the ambiguity of the choice of the initial terms. We may demand that, say, $p = f_{0}$ has the smallest absolute value among $(f_{i})$ and that $\vert f_{-1}\vert > f_{0}\leqslant f_{1}$.
\end{remark}

\paragraph{Remark on diagrammatic use of symbols.} 
The first of the following two diagrams means that 13 is obtained as $13 = 5\times 3 - 2$. 
The second represents equation $2 = 5\times 3 - 2$:
\[
\includegraphics[scale=.9]{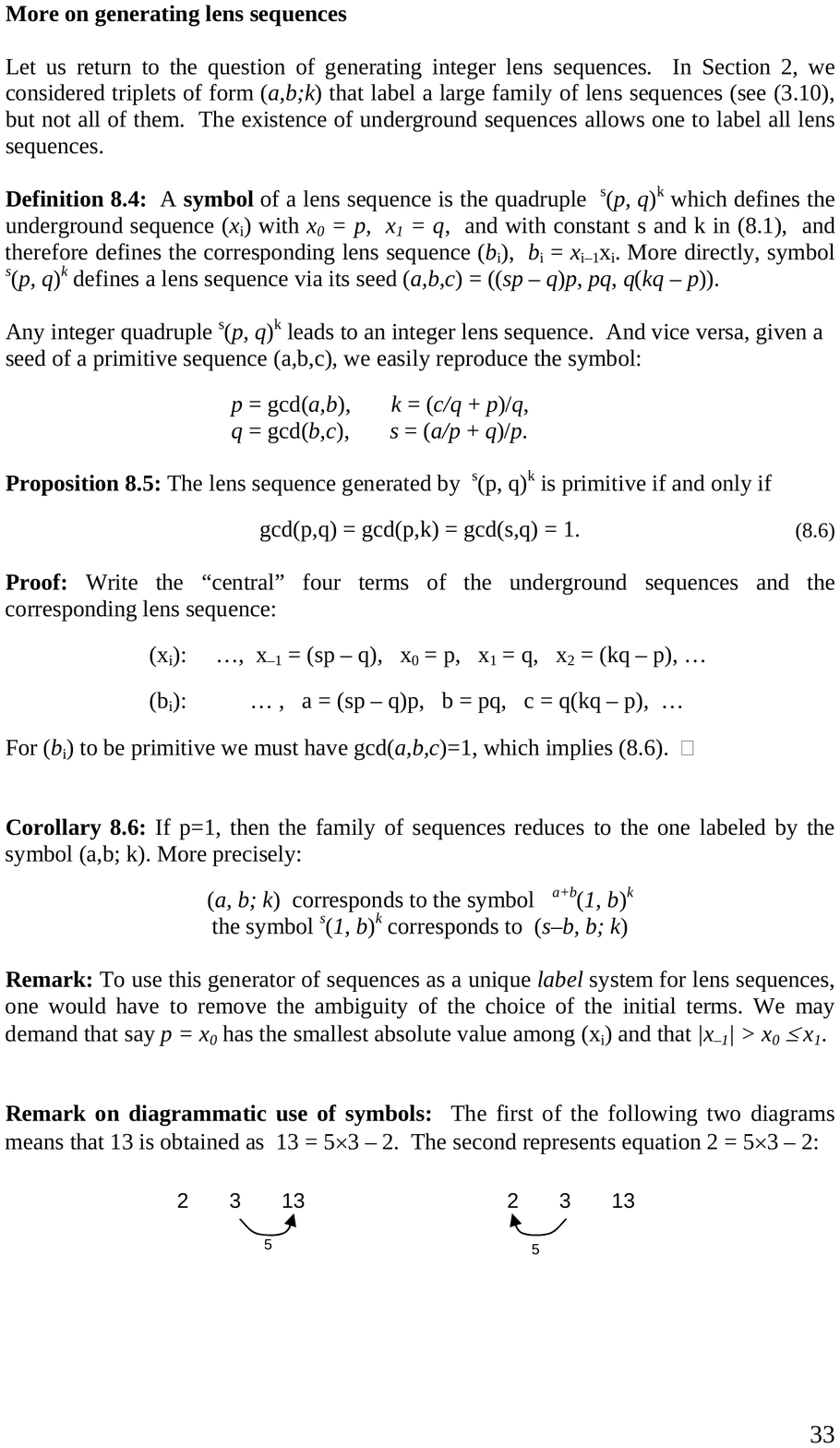} \label{fig:8-1e}
\]
Now, the symbol $^{1}(2,3)^{5}$ may be graphically developed into an 
underground sequence $(f_{i})$, and consequently into a lens sequence 
$(b_{i})$, in the following way: 
\[
\includegraphics[scale=.9]{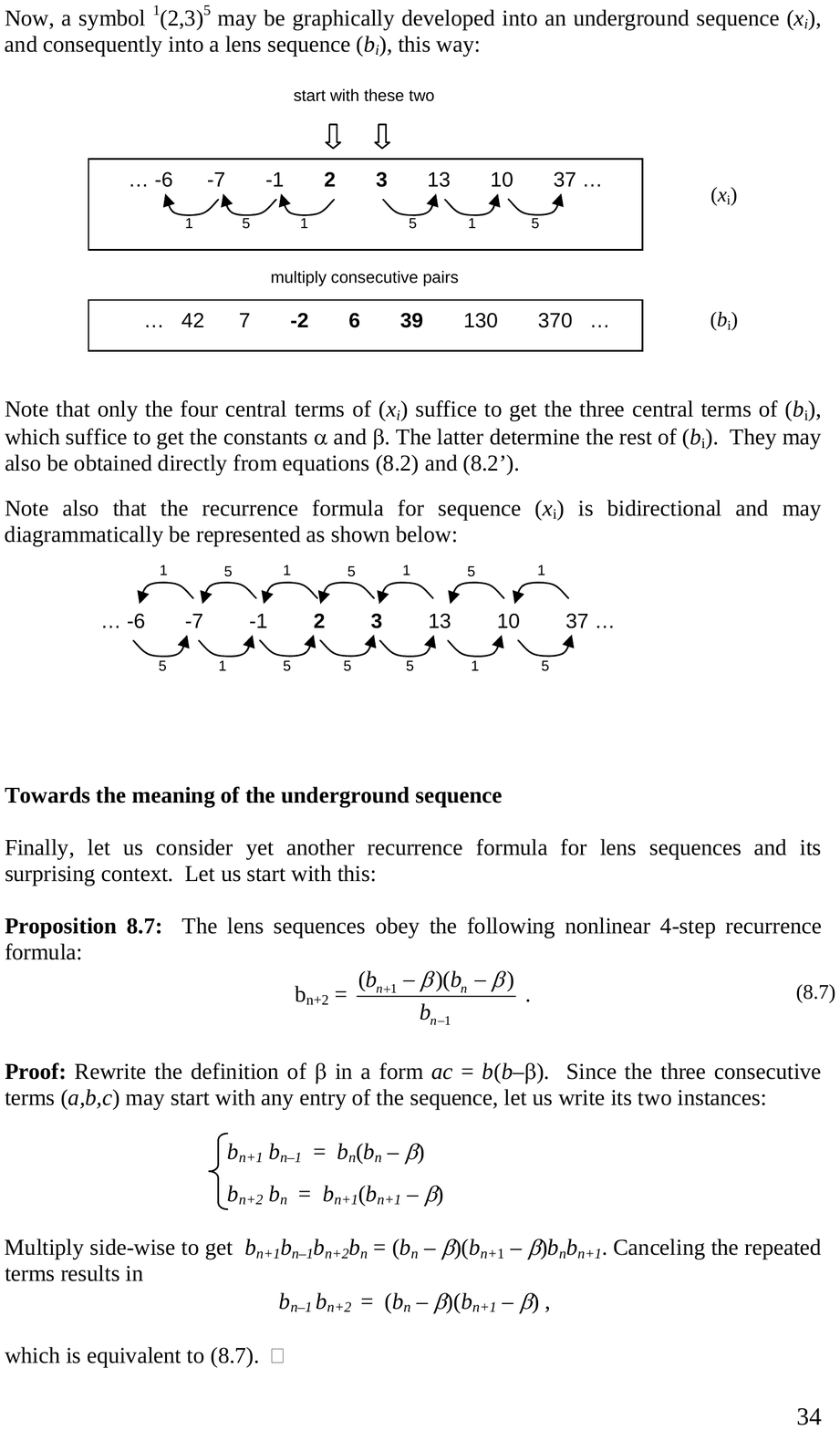} \label{fig:8-1f}
\]
Note that the four central terms of $(f_{i})$ suffice to generate the three central 
terms of $(b_{i})$, which yield the constants $\alpha $ and $\beta $. 
The recurrence formula for sequence $(f_{i})$ is bilateral 
and may be represented diagrammatically as shown below:
\[
\includegraphics[scale=.9]{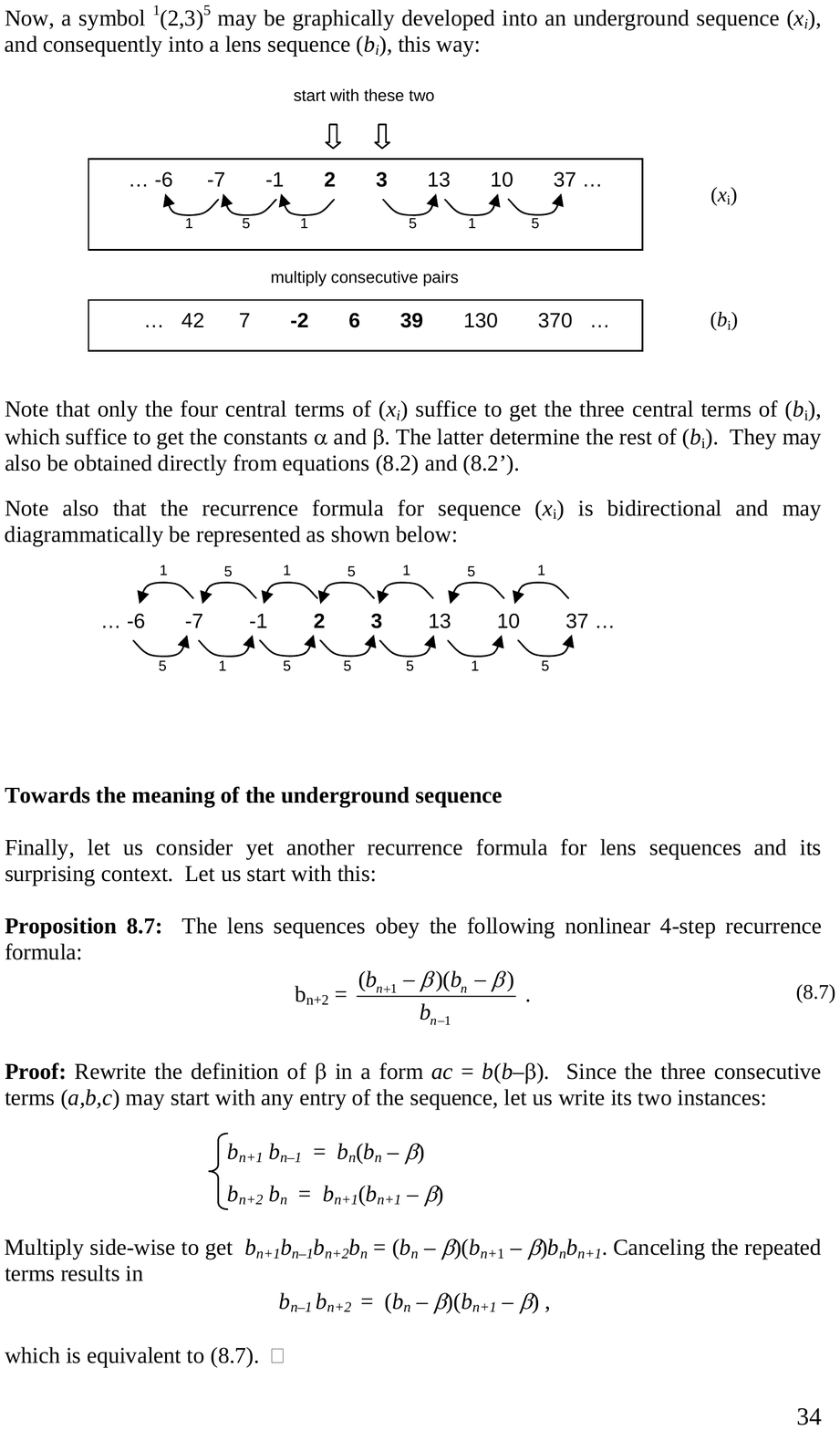} \label{fig:8-1g}
\]

\begin{table}[h]  	
\footnotesize
\hrule
\vskip2ex
{\bf Example 1} (Vesica Piscis): 
$\mathbf{b} =$ \seqnum{A011922} \quad $\mathbf{f} =$ \seqnum{A001835} = \seqnum{ A079935}
\\
$\{b_i\} = (..., 1, 3, 33, 451, 6273, 87363,\ldots)$, \quad $b_{n+1}=14b_n-b_{n-1}-8$
\\
$\{f_i\} = (..., 1, 3, 11, 41, 153, 571,\dots)$, \quad  $^4(1,1)^4$ 
\quad (number of domino packings in a ($3\times 2n)$ rectangle)
\vskip.08in 
\noindent
{\bf Example 2} (Golden Vesica): 
$\mathbf{b} =$ \seqnum{A064170} \quad $\mathbf{f} =$ \seqnum{A001519} 
\\
$\{b_i\} = (..., 1, 2, 10, 65, 442, 30,26, 20737,\ldots)$ \quad $b_{n+1}=7b_n-b_{n-1}-3$
\\
$\{f_i\} = (..., 1, 2, 5, 13, 34, 89, 233, \dots)$, \quad $^3(1,1)^3$ \quad
(odd Fibonacci numbers)  

\vskip.08in 
\noindent
{\bf Example 3}:  $\mathbf{b} =$ \seqnum{A000466}  \quad $\mathbf{f} =$ \seqnum{A005408}\\
$\{b_i\} = (..., -1, 3, 15, 35, 63, 99, 143, 195, 255,\ldots)$ \quad $b_{n+1}=2b_n-b_{n-1}+8$\\
$\{f_i\} = (...-1, 1, 3, 5, 7, 9, 11, 13, 15, \dots)$, \quad   $^2(-1,1)^2$ 
\quad (odd numbers)

\vskip.08in 
\noindent
{\bf Example 4}:  $\mathbf{b} =$ \seqnum{A081078}  \quad $\mathbf{f} =$ \seqnum{A002878} \\
$\{b_i\} = (..., -1, 4, 44, 319, 2204,\ldots)$ \quad $b_{n+1}=7b_n-b_{n-1}+15$\\
$\{f_i\} = (...-1, 1, 4, 11, 29, 76, 199, 521, 1364 \dots)$ , \quad   $^3(-1,1)^3$
\quad (odd Lucas numbers)

\vskip.08in 
\noindent
{\bf Example 5}:  $\mathbf{b} =$ \seqnum{A005247}  \quad $\mathbf{f} =$ \seqnum{A005247}\\
$\{b_i\} = (...,2, 2, 3, 6, 14, 35, 90, 234, 611, 1598, \ldots)$ \quad $b_{n+1}=3b_n-b_{n-1}-1$\\
$\{f_i\} = (...,2, 1, 3, 2, 7, 5, 18, 13, 47, 34, \dots)$, , \quad   $^1(2,1)^5$ \\
(even Lucas numbers interlaced with odd Fibonacci numbers)
\vskip.08in 
\noindent
{\bf Example 6}:  $\mathbf{b} =$ \seqnum{A027941}  \quad $\mathbf{f} =$ \seqnum{A005013}\\
$\{b_i\} = (..., 0, 1, 4, 12, 33, 88, 232, 609, \ldots)$ \quad $b_{n+1}=3b_n-b_{n-1}+1$\\
$\{f_i\} = (..., 0, 1, 1, 4, 3, 11, 8, 29, 21, 76, \dots)$, , \quad  $^1(1,1)^5$ \\
(odd Lucas numbers interlaced with even Fibonacci numbers)
\vskip.08in 
\noindent
{\bf Example 7}:  $\mathbf{b} =$\seqnum{A000217} \quad $\mathbf{f} =$ \seqnum{A026741} \\
$\{b_i\} = (...,0, 1, 3, 6, 10, 15, 21, 28 \ldots)$ \quad $b_{n+1}=2b_n-b_{n-1}+1$ \\
$\{f_i\} = (...,0, 1, 1, 3, 2, 5, 3, 7, 4, 9, \dots)$ , \quad  $^4(0,1)^1$ \\
(the sequence of natural numbers interlaced with odd natural numbers):  
$
f_n=\begin{cases}
           n   & \hbox{if } n \hbox{ is odd} \cr
           n/2 & \hbox{if } n \hbox{ is even}
     \end{cases}
$
\vskip.08in
\hrule
\caption{Examples of underground sequences $\mathbf{f}$ for 
         some integer lens sequences $\mathbf{b}$.}
\label{tbl:underground}
\end{table}

\subsection*{Towards the meaning of the underground sequence}

Finally, let us consider yet another recurrence formula for lens sequences and its surprising context. Let us start with this:

\begin{proposition} 	\label{prp:8.7} 
Lens sequences obey the following nonlinear 4-step recurrence formula: 
\begin{equation}    \label{eq:8.7}
    b_{n+2}=\frac{(b_{n+1} -\beta )(b_n -\beta )}{b_{n-1} }\, .
\end{equation}
\end{proposition}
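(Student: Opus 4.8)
The plan is to reduce the claimed four-step relation to the single-index identity already extracted in the proof of Proposition~\ref{prp:4.8}. Recall that the formula $\beta = (b_m^2 - b_{m-1}b_{m+1})/b_m$, valid for every $m$ because $\beta$ is an invariant of the sequence, rearranges at once to
\[
  b_m - \beta = \frac{b_{m-1}\,b_{m+1}}{b_m}\,.
\]
This is the only ingredient I expect to need, and it holds whenever $b_m \neq 0$.

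First I would apply this identity twice, at the two consecutive indices appearing in the numerator of \eqref{eq:8.7}. Taking $m = n+1$ gives $b_{n+1} - \beta = b_n b_{n+2}/b_{n+1}$, and taking $m = n$ gives $b_n - \beta = b_{n-1}b_{n+1}/b_n$. Substituting both into the right-hand side of \eqref{eq:8.7} yields
\[
  \frac{(b_{n+1}-\beta)(b_n - \beta)}{b_{n-1}}
  = \frac{1}{b_{n-1}}\cdot\frac{b_n b_{n+2}}{b_{n+1}}\cdot\frac{b_{n-1}b_{n+1}}{b_n}\,,
\]
after which the factors $b_{n-1}$, $b_n$, and $b_{n+1}$ cancel in pairs, leaving exactly $b_{n+2}$. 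That is the entire computation.

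Alternatively, and perhaps more transparently, I would pass to the underground factorization $b_m = f_{m-1}f_m$. Rewriting the shifted-product identity in terms of $f$ collapses it to the clean form $b_m - \beta = f_{m-2}f_{m+1}$, since the interior factors telescope. Then the right-hand side of \eqref{eq:8.7} becomes $(f_{n-1}f_{n+2})(f_{n-2}f_{n+1})/(f_{n-2}f_{n-1}) = f_{n+1}f_{n+2} = b_{n+2}$, which gives the result immediately and exhibits why the formula is so rigid.

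There is essentially no genuine obstacle: both routes are pure substitution and cancellation. The only point deserving a word of care is division by zero, since the identities presuppose $b_{n-1}, b_n, b_{n+1} \neq 0$ (equivalently, that the relevant underground terms do not vanish); I would therefore state \eqref{eq:8.7} as understood at indices where the denominators are nonzero, which is the generic situation for the sequences under study.
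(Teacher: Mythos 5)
Your first route is precisely the paper's own proof: the paper writes the invariance of $\beta$ in the two instances $b_{n+1}b_{n-1} = b_n(b_n-\beta)$ and $b_{n+2}b_n = b_{n+1}(b_{n+1}-\beta)$, multiplies side-wise, and cancels --- which is your substitution argument in slightly different clothing, with your explicit caveat about nonvanishing denominators being a point the paper leaves implicit. Your second route via the underground factorization ($b_m - \beta = f_{m-2}f_{m+1}$) is also correct and illuminating, but it invokes machinery the paper deliberately avoids here, since this proposition is proved before (and independently of) the underground-sequence results.
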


\begin{proof} Rewrite the definition of $\beta $ in the form $ac = b(b-\beta)$. 
Since the three consecutive terms $(a,b,c)$ may start with any entry of the sequence, 
let us write its two instances:
\[
  \begin{cases}
	b_{n+1} b_{n-1}=b_{n}(b_{n} - \beta) \\
	b_{n+2} b_{n}=b_{n+1}(b_{n+1} - \beta)
   \end{cases}
\]
Multiply side-wise to get $b_{n+1}b_{n-1}b_{n+2}b_{n} = (b_{n} - 
\beta)(b_{n+1} - \beta )b_{n}b_{n+1}$. Canceling the repeated terms results in
\[
  b_{n-1}b_{n+2} = (b_{n} - \beta)(b_{n+1} - \beta) ,
\]
which is equivalent to \eqref{eq:8.7}. 
\end{proof}

\begin{corollary} 	\label{cor:8.8} 
A lens sequence satisfies the following identity:
\[
\det \left[ {{\begin{array}{*{20}c}
 {b_{n+1} -\beta }  & {b_{n+2} }  \\
 {b_{n-1} }  & {b_n -\beta }  \\
\end{array} }} \right] = 0\, .
\]
\end{corollary}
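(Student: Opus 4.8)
The plan is to recognize this corollary as nothing more than Proposition \ref{prp:8.7} rewritten in determinantal form. First I would expand the $2\times 2$ determinant along the standard ``$ad - bc$'' rule, which turns the matrix identity into the scalar equation
\[
  (b_{n+1} - \beta)(b_n - \beta) - b_{n+2}\, b_{n-1} = 0\,.
\]
This is the only structural step: once the determinant is expanded, the claim becomes an ordinary algebraic equation in four consecutive terms.

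Next I would invoke Proposition \ref{prp:8.7} directly. That proposition states $b_{n+2} = (b_{n+1}-\beta)(b_n-\beta)/b_{n-1}$; clearing the denominator (which is legitimate provided $b_{n-1}\neq 0$, the generic case for lens sequences) yields precisely
\[
  b_{n+2}\, b_{n-1} = (b_{n+1}-\beta)(b_n-\beta)\,.
\]
Substituting this equality into the expanded determinant makes the two terms cancel, giving $0$, as desired.

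There is essentially no obstacle here: the corollary is a cosmetic repackaging of the preceding proposition, so the ``proof'' amounts to expanding a determinant and citing Proposition \ref{prp:8.7}. The only point meriting a word of caution is the degenerate case $b_{n-1}=0$, where the division in Proposition \ref{prp:8.7} is not literally valid; in that situation I would instead fall back on the two quadratic relations $b_{n+1}b_{n-1}=b_n(b_n-\beta)$ and $b_{n+2}b_n=b_{n+1}(b_{n+1}-\beta)$ used in the proof of that proposition, from which the determinant identity follows by the same side-wise multiplication argument without any division. Thus the statement holds for every $n$, and the corollary is immediate.
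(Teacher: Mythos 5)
Your proposal is correct and matches the paper's intent exactly: the corollary is stated without proof as an immediate restatement of Proposition \ref{prp:8.7}, whose own proof in fact establishes the division-free identity $b_{n-1}b_{n+2} = (b_n-\beta)(b_{n+1}-\beta)$ before any division occurs. Your remark on the degenerate case $b_{n-1}=0$ is a careful touch, and your fallback to the two quadratic relations is precisely the argument already contained in the paper's proof of the proposition.
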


This leads to yet another implication. The above determinant \eqref{eq:8.7} may be 
written in the form:
\[
\det \left( {\left[ {{\begin{array}{*{20}c}
 {b_{n+1} } \hfill & {b_{n+2} }  \\
 {b_{n-1} } \hfill & {b_n }  \\
\end{array} }} \right]-\left[ {{\begin{array}{*{20}c}
 \beta \hfill & 0 \hfill \\
 0 \hfill & \beta \hfill \\
\end{array} }} \right]} \right)=0\quad ,
\]
which looks like a characteristic equation with $\beta $ playing the role of the eigenvalue. Note 
that it does not depend on $n$. What are the corresponding eigenvectors?

\begin{theorem} 	\label{th:8.9} 
If $(f_{i})$ is the underground sequence of a lens sequence $(b_{i})$, then the following ``eigen-equation'' holds:
\[
  \left[ {{\begin{array}{*{20}c}
 {b_{n+1} } \hfill & {b_{n+2} } \hfill \\
 {b_{n-1} } \hfill & {b_n } \hfill \\
\end{array} }} \right]
\left[ {{\begin{array}{*{20}c}
 {f_{n+1} } \\
 {-f_{n-1} } \\
\end{array} }} \right]\quad = \quad \beta \left[ {{\begin{array}{*{20}c}
 {f_{n+1} } \\
 {-f_{n-1} } \\
\end{array} }} \right]\, .
\]
\end{theorem}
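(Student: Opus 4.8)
The plan is to verify the claimed eigen-equation directly, componentwise, by substituting the factorization $b_k = f_{k-1}f_k$ and then invoking a single translation-invariant determinant
\[
  f_{j}\,f_{j+3} - f_{j+1}\,f_{j+2} = -\beta ,
\]
which holds for every integer $j$ (this is Proposition \ref{prp:u4term}, equivalently the statement that $\Delta_n$ is independent of $n$, established inside the proof of Theorem \ref{th:8.1}). The whole argument will then amount to evaluating this one invariant at two consecutive index shifts.

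First I would handle the top component of the matrix--vector product. Using $b_{n+1} = f_n f_{n+1}$ and $b_{n+2} = f_{n+1} f_{n+2}$ and factoring out $f_{n+1}$, the top entry becomes
\[
  b_{n+1} f_{n+1} - b_{n+2} f_{n-1} = f_{n+1}\bigl(f_n f_{n+1} - f_{n-1} f_{n+2}\bigr).
\]
The parenthesized expression is the invariant at $j = n-1$ with its sign reversed, since $f_{n-1} f_{n+2} - f_n f_{n+1} = -\beta$; hence it equals $\beta$, and the top component is $\beta f_{n+1}$, matching the right-hand side.

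Next the bottom component. Using $b_{n-1} = f_{n-2} f_{n-1}$ and $b_n = f_{n-1} f_n$ and factoring out $f_{n-1}$ gives
\[
  b_{n-1} f_{n+1} - b_n f_{n-1} = f_{n-1}\bigl(f_{n-2} f_{n+1} - f_{n-1} f_n\bigr).
\]
Now the parenthesized expression is \emph{precisely} the invariant at $j = n-2$, so it equals $-\beta$, and the bottom component is $-\beta f_{n-1}$, again as required. This completes the verification.

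I expect no genuine obstacle here: both components collapse to the same $n$-independent determinant (appearing once as $+\beta$ and once as $-\beta$, which is exactly what the opposite signs built into the eigenvector $[f_{n+1},\,-f_{n-1}]^{T}$ demand), so the computation is essentially a single line carried out at two shifts. The only real care needed is the bookkeeping of the index shifts and the sign convention in the invariant. As motivation I would note that this is precisely the ``characteristic equation'' foreshadowed by Corollary \ref{cor:8.8}: the eigenvalue $\beta$ is independent of $n$, and the eigenvector is assembled from the neighbouring underground terms, which is what makes the factorization $(f_i)$ the natural diagonalizing data for the shifted $2\times 2$ blocks of $(b_i)$.
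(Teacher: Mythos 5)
Your proof is correct and follows exactly the route the paper intends: its one-line proof (``write $b$'s in terms of $f$'s and use the definition of $\beta$ written in terms of $f$'s'') is precisely your substitution $b_k = f_{k-1}f_k$ combined with the shift-invariant determinant identity $f_j f_{j+3} - f_{j+1}f_{j+2} = -\beta$ of Proposition \ref{prp:u4term}, evaluated at $j = n-1$ and $j = n-2$. Your sign bookkeeping in both components checks out, so your write-up is simply the explicit version of the paper's terse argument.
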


\begin{proof} Write $b$'s in terms of $f$'s and use the definition of $\beta $ written 
in terms of $f$'s. 
\end{proof}

\vskip.2in

\section{Summary}	\label{S9}

A lens sequence is an integer sequence $(b_{i})$ that satisfies two 
conditions:
\[
\begin{alignedat}{3}
  \hbox{(i)}&&\quad &b_{n} = \alpha b_{n-1}-b_{n-2}+\beta 
	&\hbox to .5in{} &\hbox{[recurrence formula]} 	\\
  \hbox{(ii)}&&\quad &a^{2}+b^{2} = \alpha \,ab+\beta(a+b)
	&\hbox to .5in{} &\hbox{[compatibility relation]}
\end{alignedat}
\]
where $\alpha $ and $\beta$ are constants and $a$ and $b$ are any two
consecutive terms of $(b_{i})$. These two conditions assure that 
the sequence has a geometric realization in terms of the curvatures 
of a chain of circles inscribed in a symmetric lens 
(the space of the overlap of the interiors or exteriors of two congruent circles). 
The sequence constants may be viewed as invariants of a process $i \to b_{i}$. 
They may be calculated from a \textit{seed}, i.e., any three consecutive 
sequence terms $(a, b, c)$: 
\[
  \alpha = \frac{ab+bc+ca}{b^2}-1 	
	\hbox{ and }
  \beta = \frac{b^2-ac}{b}
\]
or, for $\alpha $, alternatively 
\[
  \alpha = \frac{b_{n-1}}{b_n}+\frac{b_{n+2}}{b_{n+1}}\, .
\]
Other, nonlinear, recurrence formulas for the lens sequence include:
\[
\begin{tabular}{lll}
  \lbrack two-step formula] 
	&\multicolumn{2}{l}{$2b_{n+1}= b_n \alpha 
                         +\beta\pm \sqrt {(\alpha^2-4)\;b_n^2
                         +2(\alpha+2)\beta \;b_n +\beta^2}$}	\\ \\
  \lbrack three-step formula] 
	&$b_{n+1}b_{n} + b_{n+1}b_{n-1}+b_{n} b_{n-1}  =(\alpha +1)b_{n}^2$ 
		\hbox to .3in{}
	&(only $\alpha$)	\\ \\
  \lbrack three-step formula] \hbox to .3in{} 
	&$ b_{n}b_{n} - b_{n+1}b_{n-1}  =   \beta b_{n} $
	&(only $\beta$)	\\ \\
  \lbrack four-step formula] 
	&$b_{n+2}b_{n-1} =(b_{n+1} -\beta )(b_n -\beta)$ \\ \\
  \lbrack four-step formula] 
	&$b_{n+1}b_{n-1} + b_{n}b_{n-2} =  \alpha b_n b_{n-1} $	\\ 
\end{tabular}
\]

The sequence constants have a geometric meaning: 
$\alpha$ codes the angle under which the circles forming the lens intersect 
(if they do), or, more generally, the Pedoe product of the lens circles. 
The value of $\beta $ reflects the size of the system. 
There are two basic properties determined by geometry: 
(a) the sum of the inverses is determined by the length of the lens, 
and (b) the limit of the ratio of consecutive terms is determined by 
the aforementioned lens angle: 
\[
  \sum\limits_n {\frac{2}{b_n }} 
	= \frac{\sqrt {\alpha ^2-4}}{-\beta } \quad \hbox{ and } \quad
	\lim_{i\to\infty} \frac{b_{i+1}}{b_i}
	=\frac{\alpha +\sqrt {\alpha ^2-4} }{2}=\lambda \, ,
\]
The number $\lambda$, the \textit{characteristic constant} of $(b_{i})$, 
allows one to express the lens sequence by a Binet-type formula
\[
  b_{n} = w\lambda ^n+\bar {w}\bar {\lambda }^n+\gamma 
\]
where
\[
  w = \frac{a-2b+c}{2(\alpha -2)}+\frac{c-a}{2(\alpha ^2-4)}
	\sqrt {\alpha ^2-4}, \quad \gamma =\frac{-\beta }{\alpha -2}\,,
\]
and where the bar denotes natural conjugation in the field 
$\mathbb{Q}(\sqrt{\alpha^2-4})$\,. The constant $\lambda$
is an example of a (quadratic) Pisot number, an algebraic integer, 
the powers of which approximate natural numbers. 
In particular, $b_{n} \approx w\lambda^n+\gamma $. 
Lens sequences can be expressed also as combinations of Chebyshev polynomials.

The most mysterious property of a lens sequence is that its terms may be 
formed by taking products of pairs of consecutive terms of another sequence. 
This ``underground'' sequence has an alternating recurrence rule, different 
for odd and even terms. Namely $b_{n}=f_{n-1}f_{n}$ , where:
\[
  f_{n}=\left\{ {{\begin{array}{*{20}c}
 kf_{n-1} -f_{n-2} \quad \hbox{if}\ n \ \hbox{is even} \hfill \\
 sf_{n-1} -f_{n-2} \quad \hbox{if}\ n \ \hbox{is odd}.\; \hfill \\
\end{array} }} \right.
\]
The constants of the sequence $(b_{i})$ may now be expressed as 
\[
  \begin{cases}
	\alpha = ks - 2\\
	\beta = sf_{0}^{2} + kf_{1}^{2} - ks\,f_{0}f_{1}\,.
   \end{cases}
\]
It follows that the integer lens sequences may be determined by four arbitrary 
integers 
\[
  ^{s}(f_{0}, f_{1})^{k}\, .
\]
Choosing for $f_{0}$ the term with smallest absolute 
value allows one to treat the above quadruple as a \textbf{symbol} that 
labels the corresponding lens sequence. 

The underground sequences automatically satisfy the following two recurrence 
formulas:
$$
\begin{aligned}
          (i)& \ f_{n+2}+f_{n-2} = \alpha f_n \\
         (ii)& \ f_{n+2}f_{n-1}-f_{n}f_{n+1} = -\beta \quad\hbox{ or }\quad 
              \det\left[\begin{matrix}
                          f_{n-1} & f_{n}\\
                          f_{n+1} & f_{n+2} 
                   \end{matrix}\right]
              =-\beta
\end{aligned}
$$
More precisely, the set of the sequences that are underground sequences for lens sequences 
coincides with the intersection 
$\Lambda\cap\Delta$, where $\Lambda$ denotes the space of sequences satisfying linear 
recurrence (i), and $\Delta$ denotes the set of sequences satisfying (ii).

An intriguing property holds --- the eigenvectors of matrices assembled from 
the terms of a lens sequence are vectors with entries from the corresponding 
underground sequence:
\[
  \left[ {{\begin{array}{*{20}c}
 {b_{n+1} } \hfill & {b_{n+2} }  \\
 {b_{n-1} } \hfill & {b_n }  \\
\end{array} }} \right]
\left[ {{\begin{array}{*{20}c}
 {f_{n+1} } \\
 {-f_{n-1} } \\
\end{array} }} \right] = \beta \left[ {{\begin{array}{*{20}c}
 {f_{n+1} } \\
 {-f_{n-1} } \\
\end{array} }} \right] .
\]
But the full meaning of the underground sequences remains to be 
understood.\\

\section*{Appendix}

Below, we summarize the general formulas for each of the five types of symmetric integer lens sequences. Recall that $L$ = length of the lens, $R$ = radius of the lens circles,
$\delta$ = their relative distance, and $\lambda$ = characteristic constant.
Due to symmetry, the sums of the reciprocals are curtailed to one (right) 
tail of the sequence. 
\\
\\

\hrule\vskip1ex\noindent
\begin{enumerate}
\item[1.] Seed: $[n,1,n]$ . \qquad 
Symbol: ${}^{n+1}(1,1)^{n+1}$. \qquad
Recurrence:  
$\alpha=(n+1)^2-2$, \   
$\beta=1-n^2$
\\[5pt]    
Geometry: \quad
$R=\frac{n+1}{n-1}$,  \quad 
$L=2\frac{n+3}{n-1}$,  \quad 
$\delta=\frac{4}{n+1}$.  \quad(Inner chain)
\\
Characteristic constant:\quad
$\lambda = \left(  \frac{{n+1} + \sqrt{(n+3)(n-1)}  }{2}\right)^2
         =\frac{ n^2 + 2n -1 +(n+1)\sqrt{(n+3)(n-1)}}{2}$
\\[5pt]
Binet: \quad
$b_k = \frac{ \lambda^k +\bar\lambda^k +n+1}{n+3}$
\\[5pt]
Sum of reciprocals: \quad
$\sum\limits_{k=0}^\infty 1/b_k = 1 + \frac{1}{n} +\ldots 
   =  \frac{1}{2}+\frac{1}{2}\sqrt{\frac{n+3}{n-1}} $
\\
\\
\hrule\vskip1ex\noindent
\item[2.] Seed: $[n,1,1,n]$ . \qquad 
Symbol: ${}^{2}(1,1)^{n+1}$. \qquad
Recurrence:  
$\alpha=2n$ ,  \ 
$\beta=1-n$ .
\\[5pt]
Geometry: \quad
$R=2\frac{n+1}{n-1}$,  \quad 
$L=2\sqrt{\frac{n+1}{n-1}}$,  \quad 
$\delta=\frac{4}{\sqrt{2(n+1)}}$. \quad(Inner chain)  
\\ 
Characteristic constant:\quad
$\lambda = \left(  \frac{ \sqrt{2n+2} + \sqrt{2n-2} }{2}\right)^2
         =\frac{n +\sqrt{n^2-1}}{2}$
\\[5pt]
Binet: \quad
$b_k 
     = \frac{1}{4}\left(1+\sqrt{\frac{n-1}{n+1}}\right)\lambda^k 
      +\frac{1}{4}\left(1-\sqrt{\frac{n-1}{n+1}}\right)\lambda^{-k} +\frac{1}{2}$
\\[5pt]
Sum of reciprocals: \quad
$\sum\limits_{k=1}^\infty 1/b_k = 1 + \frac{1}{n} +\ldots
= \sqrt{\frac{n+1}{n-1}}$
\\
\\
\hrule\vskip1ex\noindent
\item[3.] Seed: $[n,2,2,n]$ . \qquad 
Symbol: ${}^{1}(2,1)^{n+2}$. \qquad
Recurrence:  
$\alpha=n$ ,  \ 
$\beta=2-n$ .
\\[5pt]
Geometry: \quad
$R=\frac{n+2}{n-2}$,  \quad 
$L=2\sqrt{R}=2\frac{n+2}{n-2}$,  \quad 
$\delta=\frac{4}{\sqrt{n+2}}$. \quad(Inner chain)  
\\ 
Characteristic constant:\quad
$\lambda = \left(  \frac{ \sqrt{n+2} + \sqrt{(n-2)} }{2}\right)^2
         =\frac{n +\sqrt{n^2-4}}{2}$
\\[5pt]
Binet: \quad
$b_k 
      = \frac{1}{2}\left(1+\sqrt{\frac{n-2}{n+2}}\right)\lambda^k 
       +\frac{1}{2}\left(1-\sqrt{\frac{n-2}{n+2}}\right)\lambda^{-k} +1$
\\ \\
Sum of reciprocals: \quad
$
\sum\limits_{k=1}^\infty 1/b_k = \frac{1}{2} + \frac{1}{n} +\ldots
= \frac{1}{2}\sqrt{\frac{n+2}{n-2}}$
\\
\\

\hrule\vskip1ex\noindent
\item[4.] Seed: $[n,-1,n]$ . \qquad 
Symbol: ${}^{n-1}(1,1)^{n+1}$. \quad
Recurrence:  
$\alpha=(n-1)^2-2$, \ 
$\beta=n^2-1$.
\\[5pt]
Geometry: \quad
$R=-\frac{n-1}{n+1}$,  \quad 
$L=2\frac{n-3}{n+1}$,  \quad 
$\delta=\frac{4}{n-1}$. \quad(Outer chain)  
\\ \\
Characteristic constant:\quad
$\lambda = \left(  \frac{ {n-1} + \sqrt{(n-3)(n+1)}  }{2}\right)^2
         =\frac{n^2-2n-1 +(n-1)\sqrt{(n-3)(n+1)}}{2}$
\\[5pt]
Binet: \quad
$b_k = \frac{ \lambda^k +\bar\lambda^k -(n-1)}{n-3}$
\\[5pt]
Sum of reciprocals: \quad
$\sum\limits_{k=1}^\infty 1/b_k =  \frac{1}{n} +\ldots 
=  \frac{4}{n+1}$
\\
\\
\hrule\vskip1ex\noindent
\item[5.] Seed: $[0,1,n]$ . \qquad 
Symbol: ${}^{1}(1,1)^{n+1}$. \qquad
Recurrence:  
$\alpha = n-1$ , \   
$\beta = 1$.
\\[5pt]
Geometry: \quad
$R=-(n+1)$,  \quad 
$L=2\sqrt{(n+1)(n-3)}$,  \quad 
$\delta=\frac{4}{\sqrt{n+1}}$.  (Outer chain)
\\[5pt]
Characteristic constant:\quad
$\lambda = \left(  \frac{{n+1} + \sqrt{(n-3)}}{2}\right)^2
         =\frac{ n -1 +\sqrt{(n+1)(n-3)}}{2}$
\\[5pt]
Binet: \quad
$b_k 
     = \frac{1}{2}\left(\frac{n-2}{n-3}  + \frac{n}{\sqrt{(n+1)(n-3)}}\right)\lambda^k 
      +\frac{1}{2}\left(\frac{n-2}{n-3}  - \frac{n}{\sqrt{(n+1)(n-3)}}\right)\lambda^{-k} 
                +\frac{1}{n-3}$
\\[5pt]
Sum of reciprocals: \quad
$\sum\limits_{k=0}^\infty 1/b_k = 1 + \frac{1}{n} +\ldots
   =  \frac{n+1-\sqrt{(n+1)(n-3)}}{2} $

\end{enumerate}

\section*{Acknowledgements}

I want to thank Philip Feinsilver, Alan Schoen, and the participants of 
the ``Apollonian Seminar" held at SIU for their interest and many helpful remarks.


\noindent 2000 {\it Mathematics Subject Classification}: Primary 11B37 (recurrence) 11B39 (Fibonacci, Lucas, and generalizations) 11B83 (Special sequences and polynomials)\\

\noindent {\it Keywords}: recurrence, Binet, Apollonian window, circles, Chebyshev, self-generating sequence.

\bigskip
\hrule
\bigskip

\noindent (Concerned with sequences 
\seqnum{A000032},    
\seqnum{A000045},    
\seqnum{A000079}, 
\seqnum{A000217}, 
\seqnum{A000466},
\seqnum{A001654},   
\seqnum{A001834},   
\seqnum{A001835},   
\seqnum{A002878},   
\seqnum{A005013},   
\seqnum{A005246},   
\seqnum{A005247},  
\seqnum{A005408},   
\seqnum{A011900}, 
\seqnum{A011922},
\seqnum{A021913}, 
\seqnum{A026741},   
\seqnum{A027941}, 
\seqnum{A032908}, 
\seqnum{A032908}, 
\seqnum{A054318}, 
\seqnum{A061278}, 
\seqnum{A064170}, 
\seqnum{A075269},   
\seqnum{A081078},
\seqnum{A084158},   
\seqnum{A084159},   
\seqnum{A099025},   
\seqnum{A101265},  
\seqnum{A101879} and
\seqnum{A109437}.)  

\bigskip
\hrule
\bigskip

\end{document}